\numberwithin{equation}{section}
\numberwithin{equation}{section}
\newtheorem{theorem}{Theorem}[section]
\newtheorem{corollary}[theorem]{Corollary}
\newtheorem{definition}[theorem]{Definition}
\newtheorem{proposition}[theorem]{Proposition}
\newtheorem{lemma}[theorem]{Lemma}
\newtheorem{remark}[theorem]{Remark}
\newcommand{\keywords}[1]{\textbf{\textit{Keywords ---}} #1}
\newcommand{\E}{\mathbb{E}}
\newcommand{\R}{\mathbb{R}}
\newcommand{\F}{\mathcal{F}}
\DeclareMathOperator*{\argmin}{\arg\!\min}
\newcommand{\norm}[1]{\left\|#1\right\|_2}
\newcommand{\ai}{{\textsc{lai}}}
\newcommand{\robd}{{\textsc{robd}}}
\newcommand{\FI}{{\textsc{fi}}}
\newcommand{\LAItxt}{{\textsc{lai}$(\gamma)$}}
\newcommand{\LAImath}{{\textsc{lai}(\gamma)}}
\newcommand{\nofi}{{\textsc{lai}(1)}}
\newcommand{\ftm}{{\textsc{ftm}}}
\newcommand{\alg}{{\textsc{alg}}}
\newcommand{\regret}{{\text{Regret}}}
\begin{document}

\title{Best of Both Worlds Guarantees for \\Smoothed Online Quadratic Optimization\footnote{This work was partially supported by the NSF grants CIF-2113027 and CPS-2240982. }}
\date{}

\author[1]{Neelkamal Bhuyan\thanks{Email: \href{mailto:nbhuyan3@gatech.edu}{nbhuyan3@gatech.edu}}\textsuperscript{,}}
\author[1]{Debankur Mukherjee}
\author[2]{Adam Wierman}

\affil[1]{Georgia Institute of Technology}
\affil[2]{California Institute of Technology}

\maketitle
\keywords{online learning algorithms, dynamic programming, smoothed online optimization, competitive analysis, regret analysis, stochastic-adversarial trade-off}

\begin{abstract}
We study the smoothed online quadratic optimization (SOQO) problem where, at each round $t$, a player plays an action $x_t$ in response to a quadratic hitting cost and an additional squared $\ell_2$-norm cost for switching actions. This problem class has strong connections to a wide range of application domains including smart grid management, adaptive control, and data center management, where switching-efficient algorithms are highly sought after. We study the SOQO problem in both adversarial and stochastic settings, and in this process, perform the first stochastic analysis of this class of problems. We provide the online optimal algorithm when the minimizers of the hitting cost function evolve as a general stochastic process, which, for the case of martingale process, takes the form of a \emph{distribution-agnostic dynamic interpolation algorithm} (\ai{}). Next, we present the stochastic-adversarial trade-off by proving an $\Omega(T)$ expected regret for the adversarial optimal algorithm in the literature (\robd{}) with respect to \ai{} and, a sub-optimal competitive ratio for \ai{} in the adversarial setting. 
Finally, we present a best-of-both-worlds algorithm that obtains a robust adversarial performance while simultaneously achieving a near-optimal stochastic performance.
\end{abstract}

\section{Introduction}
We study a class of smoothed online quadratic optimization (SOQO) problems in which a player has to make an online decision $x_t \in \R^d$, in response to a quadratic hitting cost  $f_t(x) = \frac{1}{2}(x-v_t)^T A (x-v_t)$ and $\ell_2$-norm switching cost, i.e., $c(x_t, x_{t-1})=\frac{1}{2}\|x_t - x_{t-1}\|_2^2$. We consider the problem over a finite horizon $T$, where the sequence of minimizers $\{v_t\}_{t=1}^T$ can be \textit{adversarial} or \textit{stochastic} and is revealed in an online fashion to the player.

In the last decade, the online optimization problem with quadratic costs has received significant attention due to its applications in several domains, including smart grid cost management \cite{KimGiannakis17,WangHuang14,NarayanaswamyGarg12,BadieiWierman15,Mei2011Grid,Wang1993ramp,SantosLee08}, adaptive control~\cite{Shin2023,Li2022,Li2021,GoelWierman19, Csaba11,Tang2021,Lale2020,Cohen2019,Dean2018,GoelChenWierman17,LiGuannan18},
data center management \cite{LinWiermanAndrew11,LinWierman12,LuAndrew13}, electrical vehicle charging \cite{KimMatthews15,GanTopcu13}, video transmission \cite{JosephVeciana12}, power systems \cite{NarayanaswamyGarg12,LuTuChau13,KimGiannakis17}, and chip thermal management \cite{ZaniniAtienza09,ZaniniAtienza10}. 

However, since its introduction, the smoothed online optimization problem has been studied primarily from an adversarial point of view \cite{AndrewWierman13,BansalGupta15,ChenWierman15,Antoniadis16,ChenWierman16,ChenWierman18, Antoniadis18,GoelLinWierman19, ZhangYang21,ZhangYang22, RuttenMukherjee23, ChristiansonWierman22,ChristiansonWierman23}. 
Attempts to incorporate noise in the related domain of LQR control consider very specific distributions and dynamics of the system \cite{ChenWierman15,ChenWierman16}, most of which cannot be translated to SOQO. Consequently, the existing algorithms in the online optimization community are overly pessimistic for situations that may have more structure and potentially a stochastic behavior. 

In related domains, such as scheduling, stochastic modeling has been used to capture known data patterns, and algorithms can then be designed to perform well for these known models \cite{Koutsopoulos2011,Urgaonkar2011,Jennings1996,Gandhi2010,JosephVeciana12}.  In contrast, adversarially designed algorithms must safeguard against arbitrary inputs and thus cannot exploit the inherent structures within the environment. 
This trade-off between average-case and worst-case performance is long-standing in the algorithms literature~\cite{Sriskandarajah89,Matsuo90,Kuhn03,Robey22}. 

In the case of SOQO, the design of algorithms for the case of stochastic hitting cost functions is unexplored, with this work being the first to address it. Since much of the literature in smoothed online optimization focuses on worst-case performance, the algorithms and analysis methods are not geared to exploit the structure which is now present in a stochastic environment. This motivates us to address the stochastic SOQO problem first in the quadratic setting, answering the following question:

\begin{quote}
    \textit{``Is there a simple characterization of a near-optimal policy for stochastic SOQO?  How do existing policies, designed primarily for the adversarial setting, compare to this (near-)optimal policy?"} 
\end{quote}
Tailoring to stochastic setting may lead to lack robustness against adversarial input. This represents the other side of the long-standing trade-off between optimizing average-case and worst-case performance, and motivates us to answer the second foundational question for SOQO:

\begin{quote}
    \textit{``Is there an algorithm for SOQO that achieves near-optimal performance simultaneously in stochastic and adversarial settings?"} 
\end{quote}

Such \emph{best-of-both-worlds} algorithms are akin to the seminal work by Bubeck and Slivkins~\cite{BS12}, which answers the question affirmatively in the multi-arm bandit setting.  Best-of-both-worlds algorithms are highly sought after in the algorithms literature; however, they are also rare, and there are key characteristics distinguishing the SOQO framework from the multi-arm bandits' framework of~\cite{BS12}, e.g., the \textit{unbounded continuous action space} and the \textit{dynamic optimal performance benchmark} (as opposed to a focus on static regret), that make it questionable whether such a best-of-both-worlds algorithm exists for SOQO. \\

\noindent
\textbf{Contributions.} This paper provides answers to both of the questions highlighted above.  Specifically, the contributions of this paper are four-fold:\vspace{0.2cm}

\noindent
(a) \emph{Characterization of a stochastic online-optimal algorithm}:
We start by considering the SOQO problem in the setting when the sequence of minimizers $\{v_t\}_t$ (of the hitting costs $\{f_t\}_t$) evolves as a stochastic process. We analyze the optimization problem through the lens of dynamic programming and obtain a \textit{closed form solution} for the online optimal algorithm (Theorem \ref{thm:gen_stochastic}). 
In particular, when $\{v_t\}_t$ forms a martingale, our optimal solution translates to a \textit{distribution-agnostic} algorithm, which we call the \emph{Lazy Adaptive Interpolation} (\ai{}, Algorithm \ref{alg:LAI_martingale}).
We further characterize its total cost, observe it to be $\mathcal{O}(T)$ (which is optimal), and to be insensitive to the distribution beyond the variance (see Theorem \ref{thm:LAI_total_cost}). 
Notably, this algorithm is dynamic in nature and, as expected, has a much better performance than any static hindsight-optimal decision (see Theorem \ref{thm:hindsight_static_opt}), which exhibits a $\Theta \left(T^2\right)$ cost. 
\vspace{0.2cm}

\noindent
(b) \emph{Stochastic suboptimality of the adversarial optimal}:
When adversarial inputs are considered, the performance is measured in terms of the competitive ratio, the worst-case ratio of the cost of an algorithm to the cost of the hindsight optimal sequence of decisions. 
In~\cite{GoelLinWierman19}, the authors propose the \robd{} algorithm and show that it has an optimal competitive ratio among all online algorithms.
When the inputs are martingale, however, we show that the \robd{} algorithm admits a linear regret (in the time horizon $T$) with respect to the stochastic optimal cost, i.e., the cost of the aforementioned \ai{} algorithm (see Theorem \ref{thm:R-OBD_lin_regret}). 

\noindent
(c) \emph{Adversarial analysis of stochastic optimal}:
To understand the robustness of the \ai{} algorithm against adversarial input, we perform its competitive analysis and find it potentially suboptimal (see Theorem \ref{thm:AI_cr}).
En route, we also establish competitive ratio bounds for a much wider class of memoryless algorithms, which might be of independent interest. The novelty of our analysis framework lies in its applicability to algorithms with decision rules that change over time (see Theorem \ref{thm:adv_analysis_framework}). \vspace{0.2cm}

\noindent
(d) \emph{Best-of-both-worlds algorithm design}:
Our final contribution focuses on the design of a best-of-both-worlds algorithm for SOQO. 
We present a novel hyperparametric algorithm called \LAItxt{} (Algorithm~\ref{alg:NOFI_AI}) with $\gamma\in [0,1]$, which delivers near-optimal performance (see Corollary \ref{corr:LAI_best_of_both}) in both stochastic and adversarial environments without leveraging prior knowledge about the nature of the environment.
Specifically, the hyperparameter $\gamma\in [0,1]$ determines the algorithm's position in the trade-off between adversarial and martingale settings. 
For any choice of $\gamma\in [0,1]$, the algorithm guarantees 
{(i)} a finite regret (that depends on $\gamma$) with respect to the online-optimal \ai{} algorithm, and
{(ii)} a finite competitive ratio (that depends on $\gamma$) very close to that of adversarial-optimal \robd{} algorithm (see Theorem \ref{thm:NOFI_AI_perf}).\vspace{0.3cm}

\textit{Analytical Contributions:} Analytical tools in the literature of smoothed online optimization focus solely on optimizing over the current cost function, with no regard to the future evolution of the minimizer. Further, works like \cite{Lin2021} argue that finding the optimal action sequence and cost-to-go functions is near-impossible for general cost functions. In light of these, and to obtain insight into stochastic smoothed online quadratic optimization, we make a number of contributions on the methodological front.

In particular, at the core of our analysis, we characterize the online optimal algorithm in the stochastic setting by leveraging the dynamic programming (DP) machinery. Although the DP framework offers a powerful set of tools, a fundamental difficulty lies in understanding the structure of the value function. This is compounded by the fact that our setting involves \textit{finite horizon and unbounded continuous state space and action space}. The horizon being finite makes the value function's structure \textit{time-variant}, decoding which was one of the major hurdles. The infinite state/action space ruled out tabular methods and demanded a \textit{closed form} for the time-variant value function. We overcame these by inductively identifying structure in the value function (see Proposition \ref{prop:AI_val_fn} and Theorem \ref{thm:value_fn_derivative}), enabling us to express the optimal algorithm in a simple closed form. Beyond our stochastic analysis, in the adversarial setting, the non-stationary nature of our algorithms necessitated an extension of the existing adversarial analysis methods to encompass a broader range of algorithms, allowing tighter analysis than that provided by existing methods. Lastly, the design of \LAItxt{} required careful weaving of properties of both stochastic optimal and adversarial optimal, leading to a smooth transition between the two worlds.

\section{Model and Preliminaries}
\label{sec:problem_setup}
We study smoothed online quadratic optimization (SOQO) and consider an action space $\R^d$, $d \geq 1$, over a finite time horizon of $T$ rounds. 
The hitting cost for round $t$ is given by $f_t(x) = \frac{1}{2}(x-v_t)^T A (x-v_t)$, where $A$ represents a known, positive definite $d \times d$ matrix, denoted as $A \succ 0$, and $v_t$'s are revealed in an online fashion to the decision maker. 
Additionally, there is a switching cost of $\frac{1}{2}\|x_t - x_{t-1}\|_2^2$ as the player transitions between actions.
Therefore, the player aims to minimize $\sum_{t=1}^T \big(f_t(x_t) + \frac{1}{2}\|x_t - x_{t-1}\|_2^2\big)$ in an online fashion.
Our work explores SOQO in both adversarial and stochastic environments.

\subsection{Adversarial SOQO}
Prior work on smoothed online optimization has considered an adversarial setting in which algorithms seek to ensure a performance guarantee against any sequence of minimizers $\{v_t\}_{t=1}^T$. 
Performance in this context is assessed most commonly using the \textbf{competitive ratio}, defined for an online algorithm \alg{} as
\emph{the worst-case ratio of the total cost of \alg{} and that of the offline optimal sequence of decisions}. 
For any finite time horizon $\{1, \ldots, T\}$, we denote the cost of \alg{} and the hindsight optimal solution by 
$\text{Cost}_{\text{\alg{}}}[1,T]$ and $\text{Cost}_{\text{OPT}}[1, T]$, respectively.
Within the class of hitting and switching costs considered in this work, recent work in \cite{GoelLinWierman19} has identified an algorithm, \robd{}, which maintains the optimal competitive ratio possible by any online algorithm. \robd{} is defined as follows. 
\begin{definition}\label{def:robd} \textit{
The action of \robd{} in round $t$ is  
$
    x_t = \argmin_x f_t(x) + \frac{\mu_1}{2}\norm{x - x_{t-1}}^2 + \frac{\mu_2}{2}\norm{x - v_t}^2,
$ where $\mu_1 \in [0,1]$ and $\mu_2 > 0$ are hyperparameters. 
Optimizing over $\mu_1$ and $\mu_2$, the \robd{} algorithm in our setting is
$    x_t = C_{\robd{}} x_{t-1} + (I-C_{\robd{}})v_t$
where 
$    C_{\robd{}} = \left(A + I + \mu_2^*\left(\lambda_{\min}^A\right) I \right)^{-1},$
with $\mu_2^*\left(\lambda_{\min}^A\right) = \frac{\lambda^A_{\min}}{2}\left( \sqrt{1 + \frac{4}{\lambda^A_{\min}}} - 1 \right)$ and $\lambda_{\min}^A$ denoting the smallest eigenvalue of $A$. }
\end{definition}

\noindent The competitive ratio of \robd{}, as defined above, is given by $1 + \frac{1}{2}\left(\sqrt{1 + \frac{4}{\lambda_{\min}^A}} - 1\right)$ \cite{GoelLinWierman19}.
A notable property of this competitive ratio used in this paper is its $1/\sqrt{\lambda_{\min}^A}$ growth as $\lambda_{\min}^A\to 0$, which has been shown to be optimal in~\cite{GoelLinWierman19}.

\begin{remark}\label{remark:low_lambda}
\textit{The limiting behavior of the competitive ratio under $\lambda_{\min}^A\to 0$ (or $\lambda_{i}^A\to 0$ $\forall$ $i$) is the focus of smoothed online optimization literature~\cite{BansalGupta15,Antoniadis16,Antoniadis18,ChenWierman18,GoelLinWierman19,GoelWierman19,ZhangYang21,ChristiansonWierman22, ChristiansonWierman23,RuttenMukherjee23}. Here, hitting costs are very low compared to switching costs which renders follow-the-minimizer algorithm as sub-optimal, demanding search for non-trivial algorithms.}
\end{remark}

\subsection{Stochastic SOQO}
We introduce and, for the first time, study a stochastic version of SOQO. The stochasticity in the problem arises from the sequence of minimizers $\{v_t\}_{t=1}^T$ being a stochastic process, with $\mathcal{F}_t = \sigma\left(\{v_s\}_{s=1}^t \right)$, $t= 1, \ldots, T$, being the natural filtration generated by $\{v_s\}_{s=1}^t$. In this work, we employ the dynamic programming (DP) machinery in the stochastic context, introduced by Bellman \cite{Bellman66}, as a method to find the optimal \textit{online} action sequence with respect to the expected total cost over the horizon. 
Here, the optimal online action at round $t$, $x^{DP}_t$, is given as the minimizer of
\begin{equation}\label{eqn:value_fn_defn}
     \min_{x \in \R^d} \bigg\{f_t(x) + c\left(x,x^{DP}_{t-1} \right) + \E \left[V_{t+1}(x,v_{t+1})|\mathcal{F}_t \right] \bigg\},
\end{equation}
where $V_{t}\left(x^{DP}_{t-1},v_{t}\right)$, called the \textit{value function}, is the minimum online expected current and future cost (or cost-to-go) at round $t$.
The algorithm derived from solving the aforementioned problem represents the online optimal.
It is worthwhile to note that the optimal online algorithm is usually characterized implicitly using the above-mentioned DP approach. However, in most cases, it is non-trivial and sometimes impossible to formulate a simple and explicit algorithm that is online-optimal \cite{Lin2021}. We achieve this is Section~\ref{ssec:opt-stoch}.

We put special focus on the case where the sequence of minimizers $\{v_t\}_{t=1}^T$ forms a martingale in $\R^d$, i.e, 
\begin{equation}\label{eqn:minimizer_martingale}
    \E[v_s | \mathcal{F}_t] = v_t \text{ } \forall \text{ } s\geq t.
\end{equation}
In this case, we will design an algorithm that can simultaneously achieve near-optimal performance both in the stochastic (in terms of regret with respect to the online optimal algorithm) and in the adversarial case -- thus, furnishing a best-of-both worlds performance.
The increments to the minimizers, that is, $v_t - v_{t-1}$, are considered to have a finite covariance, denoted as $\Sigma_t$. 
With a slight abuse of notation, we define the `variance' of $v_t - v_{t-1}$ as $\sigma_t^2 := \E \|v_t - v_{t-1}\|_2^2 = tr(\Sigma_t)$.

For the stochastic setting, we use the \textbf{dynamic online optimal algorithm} as the performance benchmark instead of the \emph{static} hindsight optimal commonly used in many online optimization works. 
This is because the latter exhibits a poor performance in the current setting, with a total cost that is an order of magnitude higher than that of the online optimal (Theorem \ref{thm:hindsight_static_opt}). Such a benchmark is not uncommon in the literature, as evidenced by~\cite{VeraBanerjee21}. 
We thus, consider the \textbf{(dynamic) regret as the performance metric} in the stochastic case: For any online algorithm \alg{} define
\begin{align}\label{eqn:regret_defn}
    \regret_{\text{\alg{}}}[1,T] := \E[\text{Cost}_{\text{\alg{}}}[1,T]] - \E[\text{Cost}^*[1,T]],
\end{align}
where $\E[\text{Cost}^*[1,T]]$ is the total cost of the online optimal algorithm in the stochastic setting.

\begin{remark} \textit{
The stochastic and the adversarial settings have different metrics due to the different behaviors of the environments. Although an additive gap is preferred over competitive ratio, the worst-case additive gap to $OPT$ in the adversarial setting is unbounded for finite $T$, unlike the stochastic environment. This concept of different metrics for separate assumptions already exists in the literature \cite{ZhangYang21, ChenWierman18,GoelLinWierman19}.}
\end{remark}

\section{Algorithms and Main Results}
We now present our main contributions, which include a characterization of an online optimal stochastic SOQO algorithm and an analysis of its performance in both stochastic and adversarial settings, an analysis of an optimal adversarial SOQO algorithm (\robd{}) in the martingale setting, and the design and analysis of a best-of-both-worlds algorithm that is near-optimal in both martingale and adversarial settings.

\subsection{Optimal Stochastic SOQO}
\label{ssec:opt-stoch}
The optimal online algorithm in the stochastic setting \eqref{eqn:minimizer_martingale} can be characterized using the dynamic programming (DP) machinery \eqref{eqn:value_fn_defn} discussed in the preliminaries. Our primary contribution lies in simplifying the formulation of the optimal action. The simple form we obtain is a departure from the complexity typically associated with the underlying DP methodology.

The \textit{online optimal} action, as described by Algorithm \ref{alg:LAI_martingale}, is an interpolation between $v_t$ and the previous action. The crux of the algorithm lies in the recursion followed by the matrix sequence $\{C_t\}_t$, that is,
$
    C_t^{-1} = 2I + A - C_{t+1},
$
obtained by solving the DP problem in the SOQO context. The significance of this recursion lies in its connection to (near-)optimal stochastic performance for the SOQO setting we consider. Our first result states the optimality of \ai\ and provides a closed form characterization of its optimal cost.

\begin{algorithm} [t]
\caption{Lazy Adaptive Interpolation}\label{alg:LAI_martingale}
\flushleft \textbf{Input:} $A \succ 0$, $b \in \R^d$, $T$
\flushleft \textbf{Initialize:} $\{C_t\}_t$ such that $C_t^{-1} = 2I + A - C_{t+1}$ $\forall$ $t \in \{1,\ldots, T-1\}$ and $C_T = (I+A)^{-1}$
\begin{algorithmic}
\Procedure{\ai{}}{}
\For{$t = 1,2,\ldots,T$}
\State $x^{\ai{}}_t \gets C_t  x^{\ai{}}_{t-1} + (I-C_t)  v_t$
\EndFor
\EndProcedure
\end{algorithmic}
\end{algorithm}

\begin{theorem}\label{thm:LAI_total_cost}
Lazy Adaptive Interpolation \ai{} is \textbf{online optimal} in the stochastic setting \eqref{eqn:minimizer_martingale} with total cost
\begin{equation*}
\begin{split}
    \E[\text{Cost}_{\text{\ai{}}}[1,T]] &= \sum_{t=1}^{T} \frac{1}{2} \E\left[ (v_t - v_{t-1})^T (I-C_{t}) (v_{t} - v_{t-1}) \right] \\
    &\leq \sum_{t=1}^{T} \frac{1}{2} \E\left[ (v_t - v_{t-1})^T (I-C_{L}) (v_{t} - v_{t-1}) \right]
    \end{split}
\end{equation*}
where $C_L = \frac{A + 2I - \sqrt{A^2 + 4A}}{2}$ and $v_0 = x_0$.
\end{theorem}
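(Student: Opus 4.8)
The plan is to solve the dynamic program \eqref{eqn:value_fn_defn} exactly by backward induction on the value function, identify its minimizer with the \ai{} action (which gives online optimality), and then read off the total cost as $\E[V_1(x_0,v_1)]$. The governing ansatz, which is the content of Proposition~\ref{prop:AI_val_fn} and Theorem~\ref{thm:value_fn_derivative}, is that the value function is a shifted quadratic
\begin{equation*}
    V_t(x_{t-1},v_t) = \tfrac12 (x_{t-1}-v_t)^T (I-C_t)(x_{t-1}-v_t) + q_t,
\end{equation*}
where $q_t$ is an $\F_t$-measurable term that does not depend on the state $x_{t-1}$. Since every matrix in the recursion $C_t^{-1}=2I+A-C_{t+1}$ is a rational function of $A$, all of $A,C_t,I-C_t$ commute and share the eigenbasis of $A$; I will use this throughout to reduce matrix identities to scalar ones in the eigenvalues.

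For the base case $t=T$ there is no future term, so $V_T$ minimizes $f_T(x)+\tfrac12\norm{x-x_{T-1}}^2$; the first-order condition gives the \ai{} rule with $C_T=(I+A)^{-1}$, and back-substitution collapses the cost to $\tfrac12(x_{T-1}-v_T)^T(I-C_T)(x_{T-1}-v_T)$, matching the ansatz with $q_T=0$. For the inductive step I would expand $x-v_{t+1}=(x-v_t)-(v_{t+1}-v_t)$ inside $\E[V_{t+1}\mid\F_t]$; the martingale property \eqref{eqn:minimizer_martingale} annihilates the cross term, leaving $\tfrac12(x-v_t)^T(I-C_{t+1})(x-v_t)$ plus the state-independent constant $\tfrac12\,\mathrm{tr}\big((I-C_{t+1})\Sigma_{t+1}\big)$. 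Minimizing $f_t(x)+\tfrac12\norm{x-x_{t-1}}^2+\E[V_{t+1}\mid\F_t]$ then produces the \ai{} action with $C_t=(2I+A-C_{t+1})^{-1}$, exactly the stated recursion. The remaining algebra is the collapse of the minimized quadratic coefficient: using $A+(I-C_{t+1})+I=C_t^{-1}$ one gets $C_t(A+I-C_{t+1})C_t=(I-C_t)C_t$, whence $(I-C_t)C_t+(I-C_t)^2=I-C_t$, closing the induction.

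Unrolling gives $\E[\text{Cost}_{\text{\ai{}}}[1,T]]=\E[V_1(x_0,v_1)]$. With $v_0=x_0$ the quadratic piece equals $\tfrac12\E[(v_1-v_0)^T(I-C_1)(v_1-v_0)]$, while the telescoped constants contribute $\sum_{t=2}^T\tfrac12\,\mathrm{tr}((I-C_t)\Sigma_t)$, the tower property absorbing any path-dependence of the conditional increment covariances into the unconditional $\Sigma_t$. Using $\mathrm{tr}((I-C_t)\Sigma_t)=\E[(v_t-v_{t-1})^T(I-C_t)(v_t-v_{t-1})]$ recombines everything into the claimed equality. For the inequality it suffices to show $C_t\succeq C_L$, so that $I-C_t\preceq I-C_L$ termwise. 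In the eigenbasis this is the scalar recursion $c_t=(2+\lambda-c_{t+1})^{-1}$ started at $c_T=(1+\lambda)^{-1}$, whose fixed points $c_{L},c_{U}=\tfrac12\big(2+\lambda\mp\sqrt{\lambda^2+4\lambda}\big)$ satisfy $c_Lc_U=1$, hence $c_L<1<c_U$. Checking $c_L<c_T<c_U$ (equivalently $c_U>1+\lambda$, i.e. $\sqrt{\lambda^2+4\lambda}>\lambda$) and that the increasing map $g(c)=(2+\lambda-c)^{-1}$ satisfies $g(c)-c=\frac{(c-c_L)(c-c_U)}{2+\lambda-c}<0$ on $(c_L,c_U)$ shows that the backward iterates decrease monotonically toward $c_L$ while staying above it. Thus $c_t\ge c_L$ for every $t$, giving $C_t\succeq C_L$ and the stated bound.

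The main obstacle is the inductive step: guessing and then verifying the exact shifted-quadratic value function, especially the collapse of the minimized quadratic coefficient to $I-C_t$ and the clean separation of the state-dependent quadratic from the state-independent variance constant. The martingale hypothesis \eqref{eqn:minimizer_martingale} is precisely what makes this separation possible: without the vanishing cross term the value function would acquire a linear-in-$v_t$ correction, the optimal action would depend on the increment distribution, and the distribution-agnostic \ai{} form would break.
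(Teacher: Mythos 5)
Your proposal is correct and takes essentially the same route as the paper: backward induction establishing the shifted-quadratic value function $V_t(x_{t-1},v_t)=\tfrac12(x_{t-1}-v_t)^T(I-C_t)(x_{t-1}-v_t)+q_t$ (the paper's Proposition~\ref{prop:AI_val_fn}), with the martingale property annihilating the cross term, the same first-order condition yielding $C_t^{-1}=2I+A-C_{t+1}$, and the same algebraic collapse $C_t(A+I-C_{t+1})C_t+(I-C_t)^2=I-C_t$, followed by unrolling $\E[V_1(x_0,v_1)]$. The only cosmetic difference is how $C_t\succeq C_L$ is shown: you analyze the fixed points of the scalar map $g(c)=(2+\lambda-c)^{-1}$, whereas the paper uses the equivalent identity $(c_t-c_L)=(c_{t+1}-c_L)\,c_t\,c_L$ (Lemma~\ref{lemma:scalar_ct_props} and Corollary~\ref{corr:Ct_add_props}).
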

Notice that the optimal cost in the martingale setting, described above, grows with the horizon as $\mathcal{O}(T)$.

The fact that $\{C_t\}_t$ depends solely on $A$ and the horizon $T$ unveils an unexpected aspect of the optimal online action: its insensitivity to the specifics of the stochastic environment. The proof of \ref{thm:LAI_total_cost} is presented in detail in Appendix \ref{proof:LAI_perf}.

For the martingale setting, we analyze the complete value function to extract its closed form expression. This helps us characterize both the optimal algorithm and its expected cost. Looking deeper into the \textit{derivative} of the value function, we get the following online optimal algorithm:
\begin{theorem}\label{thm:gen_stochastic}
For the general case where $\{v_t\}_{t=1}^T$ is any stochastic process, the optimal action at round $t$ is given by
\begin{align*}
\begin{split}
    x_t =& \text{ } C_t x_{t-1} + (I-C_t)v_t
    + \sum_{s=t+1}^T \left(\prod_{q=t}^{s-1} C_q \right) (I-C_s)\E[v_s - v_{s-1}| \mathcal{F}_t]
\end{split}
\end{align*}
where $\{C_t\}_{t=1}^T$ are the coefficients defined in \ai{}.
\end{theorem}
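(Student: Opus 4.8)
The plan is to solve the dynamic programming recursion \eqref{eqn:value_fn_defn} by backward induction on $t$, taking as the inductive object the closed form of the optimal policy $x_t^\ast(x_{t-1})$ as an affine function of the previous action $x_{t-1}$. The engine of the induction is a clean expression for the gradient of the value function: since the per-round objective is strictly convex and coercive on the unbounded domain $\R^d$, the minimizer is interior and $V_t$ is continuously differentiable, so the envelope theorem applied to \eqref{eqn:value_fn_defn} yields
\begin{equation*}
\nabla_{x_{t-1}} V_t(x_{t-1}, v_t) = x_{t-1} - x_t^\ast(x_{t-1}),
\end{equation*}
because only the switching term $\tfrac{1}{2}\norm{x - x_{t-1}}^2$ depends explicitly on $x_{t-1}$. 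This is exactly the content I would isolate as Theorem~\ref{thm:value_fn_derivative}, and it reduces the whole problem to propagating the affine form of $x_t^\ast$.

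For the base case $t = T$ the future term is absent, and minimizing $f_T(x) + \tfrac{1}{2}\norm{x - x_{T-1}}^2$ gives $(A+I)x = A v_T + x_{T-1}$, i.e. $x_T^\ast = C_T x_{T-1} + (I - C_T)v_T$ with $C_T = (I+A)^{-1}$, matching the claim with an empty sum. For the inductive step I assume the formula holds at round $t+1$ and write the first-order condition of \eqref{eqn:value_fn_defn} at round $t$, substituting $\nabla_x V_{t+1}(x, v_{t+1}) = x - x_{t+1}^\ast(x)$ from the envelope identity:
\begin{equation*}
A(x - v_t) + (x - x_{t-1}) + \E\!\left[\, x - x_{t+1}^\ast(x) \,\middle|\, \mathcal{F}_t \right] = 0.
\end{equation*}
Here I use the tower property $\E[\,\E[\,\cdot\mid\mathcal{F}_{t+1}]\mid \mathcal{F}_t] = \E[\,\cdot\mid\mathcal{F}_t]$ to collapse the nested conditional expectations of future increments carried by the inductive form of $x_{t+1}^\ast(x)$, so that all surviving terms are conditioned on $\mathcal{F}_t$.

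Collecting the coefficient of $x$ produces $(A + 2I - C_{t+1})x$, and the defining recursion $C_t^{-1} = 2I + A - C_{t+1}$ immediately identifies this with $C_t^{-1}x$; multiplying through by $C_t$ then exhibits $x_t^\ast$ as $C_t x_{t-1}$ plus explicit terms in $v_t$ and in $\{\E[v_s - v_{s-1}\mid\mathcal{F}_t]\}_{s > t}$. Matching against the target requires only one algebraic identity, for the coefficient of $v_t$, namely $I - 2C_t + C_t C_{t+1} = C_t A$; I would verify it by substituting $C_{t+1} = 2I + A - C_t^{-1}$ from the recursion, which collapses the left side to $C_t A$. The future-increment sums then align because the $s = t+1$ term of the target supplies precisely the piece $-C_t(I - C_{t+1})v_t$ needed to reconcile the two expressions, while $\prod_{q=t}^{s-1}C_q = C_t\prod_{q=t+1}^{s-1}C_q$ factors out the leading $C_t$ for $s \geq t+2$.

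The main obstacle I anticipate is not the algebra but justifying the induction's scaffolding: one must show that each $V_t$ is finite, jointly $C^1$ (in fact quadratic in $x_{t-1}$ with a deterministic Hessian depending only on $A$ and the $C_s$), and that the interchange of $\nabla_x$ with $\E[\,\cdot\mid\mathcal{F}_t]$ is legitimate under the finite-covariance assumption on the increments. I would handle this by strengthening the inductive hypothesis to carry the full quadratic-plus-affine structure of $V_t$ (the role I assign to Proposition~\ref{prop:AI_val_fn}): knowing $V_{t+1}$ is quadratic with Hessian fixed by the recursion guarantees both the differentiability needed for the envelope step and the integrability needed for the tower-property exchange, after which the policy computation closes the loop. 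Specializing to the martingale case, where $\E[v_s - v_{s-1}\mid\mathcal{F}_t] = 0$ for $s > t$, collapses the formula to $x_t = C_t x_{t-1} + (I - C_t)v_t$, recovering \ai{} (Algorithm~\ref{alg:LAI_martingale}) as a consistency check.
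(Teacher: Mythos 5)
Your proposal is correct and takes essentially the same route as the paper: the paper likewise isolates the envelope identity $\nabla_{x_{t-1}}V_t(x_{t-1},v_t) = x_{t-1}-x_t$ as its Theorem~\ref{thm:value_fn_derivative}, then runs a backward induction whose inductive object is the affine closed form of the optimal policy, with the same base case at $t=T$, the same first-order condition $A(x-v_t)+(x-x_{t-1})+\E[x-x_{t+1}^{\ast}(x)\,|\,\mathcal{F}_t]=0$, the same use of the tower property, and the same invocation of the recursion $C_t^{-1}=2I+A-C_{t+1}$. Your coefficient-of-$v_t$ identity $I-2C_t+C_tC_{t+1}=C_tA$ is just a repackaging of the paper's manipulation of rewriting $Av_{t-1}+(I-C_t)\E[v_t\,|\,\mathcal{F}_{t-1}]$ as $(C_{t-1}^{-1}-I)v_{t-1}+(I-C_t)\E[v_t-v_{t-1}\,|\,\mathcal{F}_{t-1}]$, so the two arguments coincide up to indexing.
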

The proof is presented in Appendix \ref{sec:gen_stochastic}. The most important aspect is the derivative of the value function, which takes a simple yet useful form, stated in Theorem \ref{thm:value_fn_derivative}, which result holds for general convex hitting costs and switching costs.

The optimality claims in Theorems \ref{thm:gen_stochastic} and \ref{thm:LAI_total_cost} are agnostic to distributions (beyond the martingale structure for the latter). This implies that the optimality is robust to two extreme stochastic scenarios: (i) \textbf{distribution shifts} and, (ii) \textbf{heavy tails} (subject to finite variance).

\begin{remark}\label{LAI_uniqueness}\textit{
Note that the LAI algorithm is the \textbf{unique} online-optimal algorithm in this setting. This is because any online-optimal action has to satisfy the value function expression at every round \cite{bellman1954theory}, which has a unique solution due to its quadratic structure.}
\end{remark}

Within the online optimization literature, it is common practice to employ the optimal static action in hindsight as a benchmark to evaluate online algorithms. However, the following result illustrates that such a benchmark is not appropriate within our framework. 
This highlights the necessity to design an algorithm that adapts dynamically in time, in order to obtain good performance guarantees. 

\begin{theorem}\label{thm:hindsight_static_opt} 
Consider the stochastic setting \eqref{eqn:minimizer_martingale} and $f_t(x) = \frac{1}{2}\norm{x - v_t}^2$ with the class of static algorithms where the action is $\underline{x} \in \R^d$ for all rounds. When the increments to the minimizers, that is, $\left\{v_t - v_{t-1} \right\}_t$, have the same variance $\sigma^2$, the cost of such an algorithm, denoted by $\text{Cost}_{\underline{x}}[1,T]$, satisfies
\begin{align}
\E\left[\inf_{\underline{x}}\text{Cost}_{\underline{x}}[1,T] \right] = \Theta \left( T^2 \right)
\end{align}
\end{theorem}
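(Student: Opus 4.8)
The plan is to reduce the hindsight-optimal static cost to a simple empirical sum-of-squares and then compute its expectation using the orthogonality of martingale increments.

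First I would observe that a static algorithm playing $\underline{x}$ in every round pays zero switching cost after the first step: since $x_t = x_{t-1} = \underline{x}$ for $t \geq 2$, the only switching term is $\frac{1}{2}\norm{\underline{x} - x_0}^2$ at $t=1$. With the convention $v_0 = x_0$, the total cost collapses to
$$
\text{Cost}_{\underline{x}}[1,T] = \frac{1}{2}\sum_{t=0}^{T}\norm{\underline{x} - v_t}^2 .
$$
This is a strictly convex quadratic in $\underline{x}$, minimized at the empirical mean $\bar{v} = \frac{1}{T+1}\sum_{t=0}^{T} v_t$, with minimum value $\inf_{\underline{x}}\text{Cost}_{\underline{x}}[1,T] = \frac{1}{2}\sum_{t=0}^{T}\norm{v_t - \bar{v}}^2$. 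Note that even discarding the switching term entirely would only change the range of the sum and not the final order, so including it merely yields a cleaner closed form.

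The key step is to rewrite this minimum in a translation-invariant form that exposes only the increments. Using the standard identity $\sum_{t=0}^{T}\norm{v_t - \bar{v}}^2 = \frac{1}{T+1}\sum_{0 \leq s < t \leq T}\norm{v_t - v_s}^2$, and writing each $v_t - v_s = \sum_{r=s+1}^{t}(v_r - v_{r-1})$ as a sum of martingale increments, I would invoke the fact that for a martingale the increments are orthogonal in $L^2$: the cross terms vanish by the tower property together with $\E[v_r - v_{r-1}\mid \cF_{r-1}] = 0$. Combined with the equal-variance assumption $\E\norm{v_r - v_{r-1}}^2 = \sigma^2$, this gives $\E\norm{v_t - v_s}^2 = (t-s)\sigma^2$.

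Substituting and collecting terms by the gap $k = t-s$ (there are $T+1-k$ pairs with gap $k$) yields
$$
\E\Big[\inf_{\underline{x}}\text{Cost}_{\underline{x}}[1,T]\Big] = \frac{\sigma^2}{2(T+1)}\sum_{k=1}^{T} k\,(T+1-k) = \frac{\sigma^2}{2(T+1)}\cdot\frac{T(T+1)(T+2)}{6} = \frac{\sigma^2 T(T+2)}{12},
$$
which is exactly $\Theta(T^2)$, delivering both the upper and lower bounds simultaneously (the quantity is an expectation of an infimum with a known closed form, so there is no need to separately exhibit a candidate action for the upper bound or bound the infimum from below). I do not expect a genuine obstacle here; the only point requiring care is the martingale orthogonality that makes $\E\norm{v_t - v_s}^2$ grow linearly in the gap $t-s$ — this is precisely what turns the $\Theta(T^2)$-many pairwise terms of typical size $\Theta(T)$ into a $\Theta(T^3)$ double sum, which after the $1/(T+1)$ normalization produces the $\Theta(T^2)$ scaling. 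The translation-invariance of the sum of squared deviations is what lets the absolute (and possibly drifting) location of the $v_t$'s drop out so that only the increment variances enter.
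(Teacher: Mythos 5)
Your proof is correct, and it reaches the theorem by a genuinely different route than the paper's. Both arguments share the same probabilistic core --- orthogonality of martingale increments in $L^2$, proved via the tower property --- but they differ in the key algebraic step. The paper explicitly solves the first-order condition for the hindsight-optimal static point $x^*$, expands $x^* - v_t$ as an explicit linear combination of the increments $u_k = v_k - v_{k-1}$ (with coefficients like $\frac{\lambda(T-k+1)}{1+\lambda T}$), and then evaluates $\sum_t \E\|x^*-v_t\|^2$ through several lines of summation algebra; moreover it sets the switching cost $\frac{1}{2}\E\|x^*-x_0\|^2$ aside and establishes $\Theta(T^2)$ for the hitting-cost part alone, leaving implicit the (easy, $O(T)$) bound on the omitted term. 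You instead fold the switching cost in as the $t=0$ term of a uniformly weighted sum, reduce the infimum to the sample variance about the empirical mean $\bar v$, and invoke the pairwise identity $\sum_{t=0}^T\|v_t-\bar v\|^2 = \frac{1}{T+1}\sum_{0\le s<t\le T}\|v_t-v_s\|^2$ together with $\E\|v_t-v_s\|^2=(t-s)\sigma^2$. This avoids ever writing down the optimizer's coefficients, gives the exact value $\frac{\sigma^2 T(T+2)}{12}$ in a few lines, and (one can check) is consistent with the paper's formulas once the expected switching cost at the optimum is added back to the paper's expression. What the paper's heavier computation buys is generality: it is written for an arbitrary hitting weight $\lambda$ (i.e., $f_t(x)=\frac{\lambda}{2}\|x-v_t\|^2$), in which case the minimizer is a \emph{weighted} mean and your unweighted pairwise identity no longer applies directly; what your argument buys is brevity, a clean closed-form constant, and a seamless treatment of the switching term. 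Since the theorem as stated takes $\lambda = 1$, your proof fully suffices.
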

Theorem~\ref{thm:hindsight_static_opt} is proved in Appendix~\ref{proof:hindsight_static_opt}.
This result has two significant implications. \textbf{First}, in contrast with Theorem~\ref{thm:LAI_total_cost}, it highlights the poor performance of the entire class of static algorithms in the stochastic SOQO framework. 
\textbf{Second}, it emphasizes the need for a dynamic benchmark when evaluating online algorithms; thereby leading us to use \ai\ as the benchmark in this work.

\subsection{Stochastic Analysis of ROBD and Fixed Interpolation Algorithms}
Having characterized the stochastic optimal algorithm, our attention shifts to analyzing the performance of the adversarial optimal algorithm, \robd{}, in the \textit{martingale} setting. As previously highlighted, a common concern is that the performance of adversarial algorithms may potentially be sub-par in stochastic settings. The following result demonstrates that \robd{} indeed has markedly poorer performance than the stochastic optimal algorithm \ai{} in the stochastic setting.

\begin{theorem}\label{thm:R-OBD_lin_regret}
Consider the stochastic setting \eqref{eqn:minimizer_martingale}. Additionally, assume that the increments $(v_t - v_{t-1})$ have the same covariance matrix $\Sigma$. 
Then for any $A \neq \lambda I$ (which occurs for $d\geq 2$), the regret of \robd{} is lower bounded as
\begin{align*}
    \text{Regret}_{\robd{}}[1,T]=
    \begin{cases}
        \Omega(T) & \text{ if } A \text{ and } \Sigma \text{ do NOT have same eigenvectors}\\
        \Omega(T) & \text{ otherwise if } (A - \lambda_{\min}^A I)\Sigma \neq 0_{d\times d}
    \end{cases}
\end{align*}
\end{theorem}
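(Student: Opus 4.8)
The plan is to exploit the fact that \robd{} is itself a \emph{constant}-matrix interpolation $x_t = C_{\robd{}}x_{t-1} + (I - C_{\robd{}})v_t$, and to measure its regret against \ai{} through the exact quadratic value function obtained in Proposition~\ref{prop:AI_val_fn} and Theorem~\ref{thm:value_fn_derivative}. Writing $Q_t(z,v,x) := f_t(x) + c(x,z) + \E[V_{t+1}(x,v_{t+1})\mid\mathcal F_t]$ for the one-step Bellman objective from state $z = x_{t-1}$, the finite-horizon performance-difference (telescoping) identity yields the \emph{exact} decomposition
\begin{equation*}
\regret_{\robd{}}[1,T] = \sum_{t=1}^{T}\E\big[Q_t(x^{\robd{}}_{t-1},v_t,x^{\robd{}}_t) - \textstyle\min_x Q_t(x^{\robd{}}_{t-1},v_t,x)\big],
\end{equation*}
since $\E[\min_x Q_1] = \E[\text{Cost}^*[1,T]] = \E[\text{Cost}_{\ai{}}[1,T]]$ by optimality of \ai{}. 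Because each $Q_t(z,v,\cdot)$ is quadratic in $x$ with a \emph{deterministic}, state-independent Hessian $H_t = A + I + \nabla_x^2\E[V_{t+1}]$, minimized at the \ai{} action $\hat x_t = C_t z + (I-C_t)v$, every summand is a clean quadratic gap $\tfrac12(x^{\robd{}}_t - \hat x_t)^T H_t (x^{\robd{}}_t - \hat x_t)$.

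Next I would simplify this gap. From the \ai{} recursion one identifies $H_t = C_t^{-1}$ (the coefficient of $z$ in $\arg\min_x Q_t$ equals $H_t^{-1}$, which must be $C_t$). Since \robd{} and \ai{} act on the \emph{same} state $z = x^{\robd{}}_{t-1}$, their actions differ by $x^{\robd{}}_t - \hat x_t = (C_{\robd{}} - C_t)(x^{\robd{}}_{t-1} - v_t)$. Writing $e_t := x^{\robd{}}_{t-1} - v_t$ and $N_t := (C_{\robd{}}-C_t)^2 C_t^{-1}$ (all matrices are functions of $A$, hence symmetric and mutually commuting, so $N_t \succeq 0$), the regret becomes $\regret_{\robd{}}[1,T] = \tfrac12\sum_{t} \mathrm{tr}(N_t R_t)$ with $R_t := \E[e_te_t^T]$. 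A short martingale computation, using $e_t = (x^{\robd{}}_{t-1}-v_{t-1}) - (v_t - v_{t-1})$ and the vanishing of the cross term by $\E[v_t - v_{t-1}\mid\mathcal F_{t-1}]=0$, gives $R_t = \E[(x^{\robd{}}_{t-1}-v_{t-1})(x^{\robd{}}_{t-1}-v_{t-1})^T] + \Sigma \succeq \Sigma$, whence $\regret_{\robd{}}[1,T] \ge \tfrac12\sum_t \mathrm{tr}(N_t\Sigma)$.

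It then remains to show $\sum_t \mathrm{tr}(N_t\Sigma) = \Omega(T)$. Diagonalizing in the eigenbasis of $A$, the scalar recursion $c^{(i)}_t = (2+\lambda_i - c^{(i)}_{t+1})^{-1}$ converges geometrically (as $T-t\to\infty$) to the stable fixed point $c^{(i)}_L = \tfrac12(\lambda_i + 2 - \sqrt{\lambda_i^2+4\lambda_i})$, so $C_t\to C_L$ geometrically and $N_t \to N_\infty := (C_{\robd{}}-C_L)^2 C_L^{-1}$. Consequently all but an $O(1)$ (horizon-independent) number of indices $t$ satisfy $\mathrm{tr}(N_t\Sigma)\ge \tfrac12\mathrm{tr}(N_\infty\Sigma)$, and the claim reduces to the strict positivity $\mathrm{tr}(N_\infty\Sigma)>0$.

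Finally I would establish positivity by kernel analysis. On the $\lambda_i$-eigenspace of $A$, the matrices act as $c^{(i)}_{\robd{}} = (\lambda_i + 1 + \mu_2^*(\lambda^A_{\min}))^{-1}$ and $c^{(i)}_L = (\lambda_i + 1 + \mu_2^*(\lambda_i))^{-1}$, the latter using that the optimal interpolation tuned to $\lambda_i$ coincides with \robd{} tuned to $\lambda_i$. Since $\mu_2^*(\cdot)$ is strictly increasing, $c^{(i)}_{\robd{}} = c^{(i)}_L$ iff $\lambda_i = \lambda^A_{\min}$, so $\ker(C_{\robd{}}-C_L)$ is the $\lambda^A_{\min}$-eigenspace $E_{\min}$ and $\ker N_\infty = E_{\min}$. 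For PSD matrices $\mathrm{tr}(N_\infty\Sigma)=0$ iff $\mathrm{range}(\Sigma)\subseteq E_{\min}$, i.e. iff $(A-\lambda^A_{\min}I)\Sigma = 0$; this is exactly the complement of the hypotheses, since case~2 assumes $(A-\lambda^A_{\min}I)\Sigma\neq0$ directly, and in case~1 a non-shared eigenbasis forces $(A-\lambda^A_{\min}I)\Sigma\neq0$ (otherwise $\mathrm{range}(\Sigma)\subseteq E_{\min}$ would yield a common eigenbasis of $A$ and $\Sigma$). In either case $\mathrm{tr}(N_\infty\Sigma)>0$, giving $\Omega(T)$. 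I expect the main obstacle to be the first step: justifying the exact value-function decomposition and pinning down $H_t = C_t^{-1}$, as this relies on the precise quadratic structure of the \ai{} value function and is what converts the awkward comparison of two correlated random trajectories into the transparent sum of nonnegative quadratic forms; once this identity and the geometric convergence $C_t\to C_L$ are secured, the remainder is the linear-algebraic kernel computation above.
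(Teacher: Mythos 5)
Your proposal is correct, and it reaches the result by a genuinely different route than the paper. The paper first proves the general \FI{} lower bound (Theorem \ref{thm:FI_lin_regret}): it computes the \emph{exact} expected cost of any fixed-interpolation algorithm by an induction over the horizon (Proposition \ref{prop:FI_cost_gen}, which produces the additive error term $-\frac{(\lambda_{\max}^C)^2}{1-(\lambda_{\max}^C)^2}\frac{\sigma^2}{2}$ from the transient of the iteration), subtracts \ai{}'s closed-form cost from Theorem \ref{thm:LAI_total_cost}, and reduces positivity of the per-round gap to the scalar fact that $w(\alpha,c)=\frac{\alpha c^2+(1-c)^2}{1-c^2}$ is uniquely minimized at $c_L$ with value $1-c_L$ (Lemma \ref{lemma:g function defn}); \robd{} is then handled by the eigenvalue identity $\lambda_i^{C_{\robd{}}}=\lambda_i^L \iff \lambda_i^A=\lambda_{\min}^A$, exactly as in your final kernel step. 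You instead decompose the regret \emph{exactly} into one-step Bellman suboptimality gaps via the telescoping performance-difference identity, which is valid here because $\E[V_1(x_0,v_1)]=\E[\text{Cost}_{\ai{}}[1,T]]$ and the history-dependent additive terms in Proposition \ref{prop:AI_val_fn} cancel inside each gap; the Hessian identification $H_t=A+I+(I-C_{t+1})=C_t^{-1}$ and the action-gap formula $(C_{\robd{}}-C_t)(x^{\robd{}}_{t-1}-v_t)$ are both correct, as is the martingale computation giving $R_t\succeq\Sigma$. What your route buys: every summand is manifestly nonnegative, so no transient error term needs to be absorbed (you simply discard the PSD surplus $\E[(x_{t-1}-v_{t-1})(x_{t-1}-v_{t-1})^T]$), and the argument applies verbatim to any \FI{} matrix $C$ with $E_{\min}$ replaced by $\ker(C-C_L)$. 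What the paper's route buys: fully explicit constants (the per-round gap $w(\lambda_j^A,\lambda_j^C)-(1-\lambda_j^L)$ and the explicit error term), whereas your burn-in length $K$ from the geometric convergence $C_t\to C_L$ (Corollary \ref{corr:Ct_add_props}) is left implicit, though it could be made explicit by the same eigenvalue bounds. Your case-1 reduction (range$(\Sigma)\subseteq E_{\min}$ forces a common eigenbasis, since $A$ acts as a scalar on $E_{\min}$ and $\Sigma$ vanishes on $E_{\min}^{\perp}$) is also sound and matches the role of Lemma \ref{lemma:non-zero var} in the paper.
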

The most important takeaway here is the large class of covariance matrices for which the above negative result holds, revealing that \robd{} has linear regret even in very simple stochastic settings. To induce such regret for \robd{}, all that is needed is positive variance along one of the eigenvectors of $A$ that does not correspond to the minimum eigenvalue.
The primary factor that causes \robd{} to have linear regret is the wide gap between the matrix $C_{\robd{}}$ and the matrix sequence of \ai{}, that is, $\{C_t\}_{t=1}^T$. In fact, this problem persists in a large class of algorithms, of which \robd{} is a part.

\begin{definition}
Consider a symmetric matrix $C$ with the same set of eigenvectors as $A$ such that $C \prec I$. The class of Fixed Interpolation (\FI{}) algorithms makes decisions at time $t$ according to
$x_t = C x_{t-1} + (I-C) v_t \text{, } \forall \text{ } t.$

\end{definition}

\noindent It is straightforward to see that \robd{} is an \FI{} algorithm, owing to $C_{\robd{}} \prec I$ and having the same set of eigenvectors as $A$. Although \FI{} algorithms feature simplicity of implementation, the following result demonstrates their poor stochastic performance and, consequently, the linear regret of \robd{}.

\begin{theorem}\label{thm:FI_lin_regret_short}
Consider the stochastic setting \eqref{eqn:minimizer_martingale}. Additionally, assume the increments $(v_t - v_{t-1})$ have the same covariance matrix $\Sigma$. Then for any $C \neq C_L = \frac{A + 2I - \sqrt{A^2 + 4A}}{2}$,
\begin{align*}
    Regret_{\FI{}}[1,T]=
    \begin{cases}
        \Omega(T) & \text{ if } A \text{ and } \Sigma \text{ do NOT have same eigenvectors}\\
        \Omega(T) & \text{ otherwise if } (C - C_L)\Sigma \neq 0_{d\times d}
    \end{cases}
\end{align*}
\end{theorem}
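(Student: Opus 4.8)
The plan is to reduce the regret to a single coefficient that is linear in $T$ and then show that this coefficient is strictly positive exactly when the stated conditions hold. First I would track the interpolation error $y_t := x_t - v_t$. Writing $w_t := v_t - v_{t-1}$ for the martingale increment, the \FI{} update $x_t = Cx_{t-1}+(I-C)v_t$ gives $y_t = C(y_{t-1}-w_t)$, with $y_0 = 0$ since $v_0 = x_0$. Because $\{v_t\}$ is a martingale, $\E[w_t\mid\mathcal{F}_{t-1}]=0$ and $y_{t-1}$ is $\mathcal{F}_{t-1}$-measurable, so all cross terms vanish and the second-moment matrix $P_t := \E[y_ty_t^{\top}]$ obeys the Lyapunov recursion $P_t = CP_{t-1}C + C\Sigma C$, $P_0=0$. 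Writing the hitting cost as $\tfrac12\mathrm{tr}(AP_t)$ and the switching cost as $\tfrac12\mathrm{tr}((I-C)^2\Sigma)+\tfrac12\mathrm{tr}((I-C)^2P_{t-1})$ (using $x_t-x_{t-1}=(I-C)(w_t-y_{t-1})$ and again the martingale property), the total cost of \FI{} becomes a sum of traces against $P_t$ and $\Sigma$.

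Next I would pass to closed form. If every eigenvalue of $C$ lies in $(-1,1)$ (otherwise $P_t$ diverges and the regret is trivially super-linear), then $P_t\to P_\infty=\sum_{k\ge1}C^k\Sigma C^k$ geometrically, so $\mathrm{Cost}_{\FI}=\tfrac T2 L(C)+O(1)$ with $L(C)=\mathrm{tr}(AP_\infty)+\mathrm{tr}((I-C)^2\Sigma)+\mathrm{tr}((I-C)^2P_\infty)$. The crucial simplification is that $A$ and $C$ are simultaneously diagonalizable (\FI{} requires $C$ to share $A$'s eigenvectors), so every matrix built from $A$ and $C$ commutes; cyclicity of the trace then collapses $L(C)=\mathrm{tr}(G(C)\Sigma)$ with $G(C)=AC^2(I-C^2)^{-1}+(I-C)^2+C^2(I-C^2)^{-1}(I-C)^2$ a function of $A,C$ alone. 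In the common eigenbasis $U$ of $A$ and $C$, $G(C)$ is diagonal, so writing $\tilde\Sigma:=U^{\top}\Sigma U$ gives $L(C)=\sum_i g(a_i,c_i)\,\tilde\Sigma_{ii}$ with scalar $g(a,c)=\tfrac{ac^2}{1-c^2}+\tfrac{1-c}{1+c}$. The same reduction applied to Theorem~\ref{thm:LAI_total_cost}, together with $C_t\to C_L$ geometrically under $C_t^{-1}=2I+A-C_{t+1}$, yields $\mathrm{Cost}_{\ai}=\tfrac T2\sum_i(1-c_i^L)\tilde\Sigma_{ii}+O(1)$, where $c_i^L$ denotes the eigenvalue of $C_L$. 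Hence
\begin{equation*}
\regret_{\FI}[1,T]=\frac T2\sum_i\big(g(a_i,c_i)-(1-c_i^L)\big)\,\tilde\Sigma_{ii}+O(1).
\end{equation*}

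The heart of the argument is a scalar claim: for each fixed $a>0$, $g(a,\cdot)$ attains its unique minimum on $(-1,1)$ at $c=c^L$, with $g(a,c^L)=1-c^L$. I would verify this directly: using $1-c^2=(1-c)(1+c)$ one finds $\partial_c g(a,c)=\tfrac{2}{(1+c)^2(1-c)^2}\big(ac-(1-c)^2\big)$, so the only stationary point solves $ac=(1-c)^2$, i.e. $a=(1-c)^2/c$, which is precisely the fixed-point relation for the eigenvalues of $C_L$ coming from $C_L^{-1}=2I+A-C_L$. Since $c\mapsto(1-c)^2/c$ is strictly decreasing on $(0,1)$ this root $c^L$ is unique, $g$ decreases on $(-1,c^L)$ and increases on $(c^L,1)$, and substituting $a=(1-c^L)^2/c^L$ gives $g(a,c^L)=1-c^L$. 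Consequently every summand is nonnegative (consistent with the optimality of \ai{}), and strictly positive iff $c_i\neq c_i^L$ and $\tilde\Sigma_{ii}>0$.

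Finally I would translate this positivity into the two cases. In the aligned case ($A,\Sigma$ share eigenvectors), $\tilde\Sigma$ is diagonal with $\tilde\Sigma_{ii}=\sigma_i^2$ and $C-C_L$ is diagonal with entries $c_i-c_i^L$, so $(C-C_L)\Sigma\neq 0_{d\times d}$ is equivalent to the existence of a coordinate with $c_i\neq c_i^L$ and $\sigma_i^2>0$, giving $\regret_{\FI}=\Omega(T)$. In the non-aligned case, $C\neq C_L$ supplies a coordinate $i$ with $c_i\neq c_i^L$, and non-alignment of $A$ and $\Sigma$ (for non-degenerate $\Sigma$, e.g.\ $\Sigma\succ0$) forces $\tilde\Sigma_{ii}=u_i^{\top}\Sigma u_i>0$ there; indeed $\tilde\Sigma_{ii}=0$ would put the $A$-eigenvector $u_i$ in $\ker\Sigma$, and if this occurred at every mismatched coordinate we would be back in the aligned situation. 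I expect the scalar minimization of $g(a,\cdot)$ to be the main obstacle: it is what upgrades the ``$\ge 0$'' guaranteed by optimality of \ai{} to the \emph{strict} per-coordinate gap that drives linear regret, and it is where the algebraic identity $a=(1-c^L)^2/c^L$ for $C_L$ must be matched to the stationarity condition.
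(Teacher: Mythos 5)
Your proposal is correct in substance and arrives at exactly the objects the paper's proof uses: your scalar function $g(a,c)$ is the paper's $w(\alpha,c)=\frac{\alpha c^2+(1-c)^2}{1-c^2}$ from Theorem \ref{thm:FI_lin_regret}, your per-coordinate gap $\bigl(g(a_i,c_i)-(1-c_i^L)\bigr)\tilde\Sigma_{ii}$ with $\tilde\Sigma_{ii}=u_i^T\Sigma u_i$ is the paper's $\bigl(w(\lambda_j^A,\lambda_j^C)-(1-\lambda_j^L)\bigr)\,e_j^TP^T\Sigma Pe_j$, and the final case split is the same. Where you genuinely differ is in how the linear-in-$T$ coefficient of \FI{}'s cost is derived: the paper (Proposition \ref{prop:FI_cost_gen}) runs a backward induction on the cost of the last $t$ rounds and then discards the geometrically decaying transients, whereas you run a forward recursion $P_t=C(P_{t-1}+\Sigma)C$ on the second moment of the tracking error $x_t-v_t$ and pass to its stationary solution $P_\infty$; both yield the same matrix $(I-C^2)^{-1}\bigl(AC^2+(I-C)^2\bigr)$, but your route makes the $O(1)$ correction transparent and is arguably cleaner. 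Your treatment also adds value in two places where the paper is thinner: you actually prove the scalar minimization claim (the paper states Lemma \ref{lemma:g function defn} without proof), including the identification of the stationarity condition $ac=(1-c)^2$ with the fixed-point equation $C_L^{-1}=2I+A-C_L$, and you handle $c\in(-1,1)$ rather than only $[0,1]$, which matters since the \FI{} definition permits negative eigenvalues. Finally, you use the asymptotically exact \ai{} cost $\frac{T}{2}\sum_i(1-c_i^L)\tilde\Sigma_{ii}+O(1)$ via the geometric convergence $C_t\to C_L$, where the paper uses the one-sided bound $I-C_t\prec I-C_L$; both suffice.

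The one step that does not hold up is your closing argument in the non-aligned case: the claim that if $\tilde\Sigma_{ii}=0$ at every mismatched coordinate ``we would be back in the aligned situation'' is false for $d\ge 3$. Take $A=\mathrm{diag}(1,2,3)$ in the standard basis, let $C$ differ from $C_L$ only in the first eigenvalue, and let $\Sigma$ annihilate $e_1$ while acting non-diagonally on the span of $e_2,e_3$; then $A\Sigma\neq\Sigma A$ (so $A$ and $\Sigma$ do not share eigenvectors), yet the only mismatched coordinate has $\tilde\Sigma_{11}=0$, your (exact) leading coefficient vanishes, and the regret is in fact $O(1)$. You should know, however, that this is not a defect relative to the paper: the paper's own Lemma \ref{lemma:non-zero var} asserts $e_j^TP^T\Sigma Pe_j>0$ for \emph{all} $j$ whenever the modal matrices differ, and its proof rests on the claim that a vector of one orthonormal eigenbasis cannot be orthogonal to any vector of a different one, which fails in exactly the same $d\ge 3$ configuration. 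In other words, both you and the paper fully establish the second case --- and note that $(C-C_L)\Sigma\neq 0_{d\times d}$ is precisely equivalent to ``some mismatched $i$ has $\tilde\Sigma_{ii}>0$,'' which is the true necessary and sufficient condition for linear regret --- while the first case as stated needs an extra hypothesis such as $\Sigma\succ 0$, which you at least flag explicitly and the paper does not.
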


Here, the importance of the recursion followed by $\{C_t\}_{t=1}^T$, in the \ai{} algorithm, is highlighted in the context of fixed interpolation algorithms. Unless the matrix $C$ is $C_L$, one can show that a ``non-vanishing gap" exists between $C$ and $\{C_t\}_{t=1}^T$, which becomes the driving factor for the linear regret. We present the proof of the above theorem along with the exact expression of the lower bound, which quantifies the factors responsible for the linear regret, in Theorem \ref{thm:FI_lin_regret} in Section \ref{proof:FI_lin_regret_short}.  Note that the matrix $C_L$, in fact, has a vanishing gap to $\{C_t\}_{t=1}^T$ and we revisit this important property in upcoming subsections.

\subsection{Adversarial Analysis of Lazy Adaptive Interpolation Algorithms}\label{subsec:adversarial}
We now present the other half of the contrast between the stochastic optimal and adversarially optimal algorithms.  Specifically, in this section, we bound the performance of the stochastic optimal policy \ai{} in the adversarial setting.

\begin{theorem}\label{thm:AI_cr}
In the adversarial setting, the competitive ratio of the \ai{} algorithm for hitting cost $f_t(x) = \frac{1}{2}(x-v_t)^T A (x-v_t)$ and switching cost $c(x_t,x_{t-1}) = \frac{1}{2}\|x_t - x_{t-1}\|_2^2$ satisfies
 $\text{CR}_{\text{\ai{}}} \leq 1 + \frac{1}{\lambda_{\min}^A}.$
\end{theorem}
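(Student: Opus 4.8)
The plan is to apply the time-varying adversarial analysis framework (Theorem~\ref{thm:adv_analysis_framework}) to \ai{} after decoupling the problem along the eigenbasis of $A$. Since $A \succ 0$ is symmetric, write $A = U\Lambda U^T$ with $\Lambda = \mathrm{diag}(\lambda_1,\dots,\lambda_d)$. The recursion $C_t^{-1} = 2I + A - C_{t+1}$ with base case $C_T = (I+A)^{-1}$ shows, by backward induction on $t$, that every $C_t$ is simultaneously diagonalizable with $A$ in the eigenbasis $U$; in particular each $C_t$ commutes with $A$. Rotating coordinates by $U^T$, the hitting cost $\frac12(x-v_t)^TA(x-v_t)$ and the switching cost $\frac12\norm{x - x_{t-1}}^2$ both separate across the $d$ eigen-directions, and the \ai{} update decouples into $d$ independent scalar updates $x_{t,i} = c_{t,i}x_{t-1,i} + (1-c_{t,i})v_{t,i}$ with $c_{t,i}^{-1} = 2 + \lambda_i - c_{t+1,i}$ and $c_{T,i} = (1+\lambda_i)^{-1}$. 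Because the offline problem has no coupling across coordinates, $\text{Cost}_{\text{OPT}}$ also separates, so $\text{Cost}_{\text{\ai{}}} = \sum_i \text{Cost}^{(i)}_{\text{\ai{}}}$ and $\text{Cost}_{\text{OPT}} = \sum_i \text{Cost}^{(i)}_{\text{OPT}}$. Hence $\text{CR}_{\text{\ai{}}} \le \max_i \text{CR}^{(i)}$, and it suffices to prove the scalar bound $\text{CR}^{(i)} \le 1 + 1/\lambda_i$ and observe it is decreasing in $\lambda_i$, so the maximum is attained at $\lambda_{\min}^A$.

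For the scalar problem with curvature $\lambda$, I would recognize \ai{} as a memoryless algorithm with a \emph{time-varying} decision rule, the precise class the framework is built to handle. Concretely, the scalar update $x_t = c_t x_{t-1} + (1-c_t)v_t$ is exactly the minimizer of $\frac\lambda2 (x-v_t)^2 + \frac12(x-x_{t-1})^2 + \frac{\mu_{2,t}}{2}(x-v_t)^2$ with $\mu_{2,t} = 1 - c_{t+1} \ge 0$, which follows from $c_t^{-1} = 2+\lambda - c_{t+1}$ and $c_t \le (1+\lambda)^{-1} < 1$. This casts \ai{} as a time-varying \robd{}-type rule. The framework supplies a quadratic potential $\Phi_t = \frac{b_t}{2}(x_t - y_t)^2$ comparing \ai{}'s state $x_t$ to the offline optimal state $y_t$, and reduces the claim to a per-round drop inequality $f_t(x_t) + \frac12(x_t - x_{t-1})^2 + \Phi_t - \Phi_{t-1} \le \eta\big(f_t(y_t) + \frac12(y_t-y_{t-1})^2\big)$ with $\eta = 1 + 1/\lambda$. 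Summing over $t$ telescopes the potential; using $\Phi_0 = 0$ (both trajectories start at $v_0 = x_0$) and $\Phi_T \ge 0$ then yields $\text{Cost}^{(i)}_{\text{\ai{}}} \le \eta\, \text{Cost}^{(i)}_{\text{OPT}}$.

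The main obstacle is verifying this per-round inequality \emph{uniformly in $t$} despite the time-varying coefficient $c_t$. After substituting the \ai{} rule, the inequality becomes the nonnegativity of a small quadratic form in the variables $(x_{t-1}-y_{t-1},\, y_t - v_t,\, y_t - y_{t-1})$, whose coefficients depend on $\lambda$, $c_t$, and the potential weights $b_{t-1}, b_t$. I would select $\{b_t\}$ compatibly with the recursion so that the form stays positive semidefinite over the entire admissible range $c_t \in \big(c_L, (1+\lambda)^{-1}\big]$, where $c_L = \frac{\lambda+2-\sqrt{\lambda^2+4\lambda}}{2}$ is the fixed point satisfying $c_L^2 - (2+\lambda)c_L + 1 = 0$. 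Since $\{c_t\}$ is monotone and converges to $c_L$, it is enough to check the form at the endpoints of this range, and $\eta = 1 + 1/\lambda$ is precisely the smallest multiplier keeping it semidefinite throughout — this is where the looseness relative to \robd{}'s optimal ratio $1+\frac12(\sqrt{1+4/\lambda}-1)$ appears. Taking the maximum over eigen-directions finally gives $\text{CR}_{\text{\ai{}}} \le 1 + 1/\lambda_{\min}^A$.
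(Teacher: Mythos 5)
Your proposal is correct and takes essentially the same route as the paper's proof: both recast \ai{} as a time-varying \robd{}-type rule (i.e., $x_t$ minimizes $f_t(x) + \frac{1}{2}\norm{x-x_{t-1}}^2 + \frac{1}{2}(x-v_t)^T\left(C_t^{-1} - I - A\right)(x-v_t)$), apply Theorem~\ref{thm:adv_analysis_framework} with $\alpha=\beta=1$, and use the ordering $C_L \prec C_1 \prec \cdots \prec C_T = (I+A)^{-1}$ to get $\min_t \alpha'_{t-1} = 0$ (hence the dominant term $1/\lambda_{\min}^A$) while the $\max_t \beta'_t$ term is strictly dominated because $\sqrt{y^2+4y}-y<2$ for all $y \ge 0$. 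Your preliminary eigen-decomposition into $d$ scalar problems (justified by the mediant inequality, with zero-cost coordinates handled separately) is a harmless extra step that the paper skips by running the framework directly in matrix form; per coordinate, your identification $\mu_{2,t} = 1-c_{t+1} \in [0,\, 1-c_L)$ reproduces exactly the paper's parameter computation, and invoking the framework's bound makes your endpoint-checking discussion (which, as stated, would not by itself rigorously establish the per-round positive-semidefiniteness) unnecessary.
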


The above theorem is proved in Appendix \ref{proof:AI_cr}. Contrasting the adversarial performance of \ai{} with that of \robd{}, the cost ratio between Lazy Adaptive Interpolation and \robd{} can become unbounded as $\lambda^A_{\min}$ shrinks, in the following manner
\begin{equation*}
\frac{\text{Cost}_{\ai{}}[1,T]}{\text{Cost}_{\robd{}}[1,T]} \leq 
\frac{1 + \frac{1}{\lambda^A_{\min}}}{1 + \frac{1}{2}\left(\sqrt{1 + \frac{4}{\lambda_{\min}^A}} - 1\right)}
\end{equation*}
Although the aforementioned observation serves as an upper bound, we note that the suboptimal performance of \ai{} relative to \robd{} is further evident in our numerical experiments. 
The sub-optimality of \ai{} can be primarily attributed to the fact that it relies only on the matrix $C_T$, which further dictates the entire matrix sequence $\{C_t\}_{t=1}^T$ of \ai{}.

Note that the uniqueness of the online optimal algorithm establishes that any algorithm aiming for optimality in the stochastic setting behaves poorly in the adversarial setting.

We analyze the \ai{} algorithm in the adversarial context by developing a framework that can furnish adversarial guarantees for a large of class of static and dynamic algorithms within a broader family of cost functions. In that context, we briefly introduce some terminology associated with this framework, frequently encountered in the convex optimization literature. 

\begin{definition}
\textit{The Bregman divergence between two vectors in $\R^d$, $y$ and $x$, with respect to $h(\cdot)$. is defined as} $D_h(y||x):= h(y) - h(x) - \langle \nabla h(x), y-x \rangle.$ 
\textit{The function $h(x)$ is $\alpha$-strongly convex and $\beta$-smooth if and only if}
\begin{align}
    \frac{\alpha}{2}\|y-x\|_2^2\leq D_h(y||x) \leq \frac{\beta}{2}\|y-x\|_2^2 \text{ }\forall \text{ } y,x
\end{align}
\end{definition}

We now present a bound on the competitive ratio of a general set of policies, including \ai{}, for $m$-strongly convex hitting costs $f_t(x)$ and switching costs as $D_h(x_t||x_{t-1})$, where $h(\cdot)$ is $\alpha$-strongly convex and $\beta$-smooth. 

\begin{theorem}\label{thm:adv_analysis_framework}
Any online algorithm \alg{} that can be expressed in the following form, where $g_t(\cdot)$ is $\alpha_t'-$strongly convex and $\beta_t'-$smooth for $0 \leq \alpha'_t \leq \beta_t$,
\begin{equation*}
    x_t = \argmin_x f_t(x) + D_{h}(x||x_{t-1}) + D_{g_t}(x||v_t) \text{ } \forall \text{ } t.
\end{equation*}
has a competitive ratio in the adversarial setting upper-bounded as
\begin{equation*}
    \text{CR}_\alg{} \leq 1 + \max \left\{\frac{\max_t \beta'_t}{m} , \frac{\beta^2/ \alpha}{\min_t \alpha'_{t-1} + m}  \right\}
\end{equation*}
\end{theorem}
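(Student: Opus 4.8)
The plan is to prove a per-round ``local competitiveness'' inequality of the form $f_t(x_t)+D_h(x_t\|x_{t-1})+\Phi_t-\Phi_{t-1}\le \eta\big(f_t(y_t)+D_h(y_t\|y_{t-1})\big)$ for a nonnegative potential $\Phi_t$ with $\Phi_0=0$, and then telescope over $t=1,\dots,T$; since $\Phi_T\ge 0$ this gives $\text{Cost}_{\alg{}}[1,T]\le \eta\,\text{Cost}_{\text{OPT}}[1,T]$ with $\eta$ the claimed bound. Here $\{x_t\}$ are \alg{}'s actions, $\{y_t\}$ the offline optimal actions (with $y_0=x_0$), and I take the \textbf{Bregman potential} $\Phi_t:=D_h(y_t\|x_t)$, so that $\Phi_0=D_h(x_0\|x_0)=0$ and the $-D_h(y_t\|x_t)$ that will appear below is exactly $-\Phi_t$.

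First I record \alg{}'s first-order optimality condition, $\nabla f_t(x_t)+(\nabla h(x_t)-\nabla h(x_{t-1}))+(\nabla g_t(x_t)-\nabla g_t(v_t))=0$, using $\nabla_x D_h(x\|x_{t-1})=\nabla h(x)-\nabla h(x_{t-1})$ and likewise for $g_t$. Substituting $\nabla f_t(x_t)$ into the $m$-strong-convexity bound $f_t(x_t)\le f_t(y_t)-\langle \nabla f_t(x_t),y_t-x_t\rangle-\tfrac{m}{2}\norm{y_t-x_t}^2$, and converting each inner product via the three-point (law-of-cosines) identity $\langle \nabla h(c)-\nabla h(b),\,a-c\rangle=D_h(a\|b)-D_h(a\|c)-D_h(c\|b)$, applied once to $h$ (with $a=y_t,b=x_{t-1},c=x_t$) and once to $g_t$ (with $a=y_t,b=v_t,c=x_t$), the $-D_h(x_t\|x_{t-1})$ produced cancels the switching cost added to the left side, yielding the master inequality
\[
f_t(x_t)+D_h(x_t\|x_{t-1})\le f_t(y_t)+D_{g_t}(y_t\|v_t)+D_h(y_t\|x_{t-1})-D_{g_t}(y_t\|x_t)-D_{g_t}(x_t\|v_t)-D_h(y_t\|x_t)-\tfrac{m}{2}\norm{y_t-x_t}^2.
\]

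The two entries of the maximum now arise from charging the hitting and switching parts separately. For the hitting part, $\beta_t'$-smoothness of $g_t$ gives $D_{g_t}(y_t\|v_t)\le \tfrac{\beta_t'}{2}\norm{y_t-v_t}^2$, and since $v_t=\argmin f_t$ and $f_t\ge 0$, $m$-strong convexity gives $\norm{y_t-v_t}^2\le \tfrac{2}{m}f_t(y_t)$; hence $f_t(y_t)+D_{g_t}(y_t\|v_t)\le(1+\tfrac{\beta_t'}{m})f_t(y_t)$, and discarding $D_{g_t}(x_t\|v_t)\ge 0$ produces the factor $1+\tfrac{\max_t\beta_t'}{m}$. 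The switching part is the crux and the main obstacle: $D_h(y_t\|x_{t-1})$ couples OPT's current action with \alg{}'s previous action, and $D_h$ is asymmetric, so a naive squared-norm potential is lossy. Instead I expand it by the same three-point identity, $D_h(y_t\|x_{t-1})=D_h(y_t\|y_{t-1})+\Phi_{t-1}+\langle \nabla h(y_{t-1})-\nabla h(x_{t-1}),\,y_t-y_{t-1}\rangle$, so that $+D_h(y_t\|x_{t-1})-D_h(y_t\|x_t)=D_h(y_t\|y_{t-1})+\Phi_{t-1}-\Phi_t+(\dagger)$ with cross term $(\dagger)$. Applying Young's inequality $(\dagger)\le \tfrac{1}{2\tau}\norm{\nabla h(y_{t-1})-\nabla h(x_{t-1})}^2+\tfrac{\tau}{2}\norm{y_t-y_{t-1}}^2$, then $\beta$-smoothness ($\norm{\nabla h(y_{t-1})-\nabla h(x_{t-1})}\le\beta\norm{y_{t-1}-x_{t-1}}$) and $\alpha$-strong convexity ($\norm{y_t-y_{t-1}}^2\le\tfrac{2}{\alpha}D_h(y_t\|y_{t-1})$), the $\tfrac{\tau}{\alpha}D_h(y_t\|y_{t-1})$ piece feeds the switching factor while the $\tfrac{\beta^2}{2\tau}\norm{y_{t-1}-x_{t-1}}^2$ piece is charged against the \emph{previous} round's leftover negative quadratic $-D_{g_t}(y_t\|x_t)-\tfrac{m}{2}\norm{y_t-x_t}^2\le-\tfrac{\alpha_t'+m}{2}\norm{y_t-x_t}^2$. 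This cross-step charging is precisely the source of the index $t-1$: the demand $\tfrac{\beta^2}{2\tau}\norm{y_{t-1}-x_{t-1}}^2$ at round $t$ is absorbed provided $\tfrac{\beta^2}{\tau}\le \alpha_{t-1}'+m$, and optimizing $\tau=\tfrac{\beta^2}{\min_t\alpha_{t-1}'+m}$ gives the switching factor $1+\tfrac{\beta^2/\alpha}{\min_t\alpha_{t-1}'+m}$.

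Finally I would sum the per-round inequalities: the potential telescopes as $\sum_t(\Phi_{t-1}-\Phi_t)=-\Phi_T\le 0$, the charged quadratics cancel across consecutive rounds by the choice of $\tau$ (the unmatched $-\tfrac{\alpha_T'+m}{2}\norm{y_T-x_T}^2$ only helps), and $\text{Cost}_{\text{OPT}}=\sum_t f_t(y_t)+\sum_t D_h(y_t\|y_{t-1})$. This yields $\text{Cost}_{\alg{}}[1,T]\le\max\{1+\tfrac{\max_t\beta_t'}{m},\,1+\tfrac{\beta^2/\alpha}{\min_t\alpha_{t-1}'+m}\}\,\text{Cost}_{\text{OPT}}[1,T]$, i.e.\ the claimed competitive ratio. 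The delicate point is entirely in the switching step: choosing the Bregman (rather than squared-norm) potential so that $-\Phi_t$ matches the $D_h(y_t\|x_t)$ term exactly, and accounting for the cross term $(\dagger)$ globally across rounds, is what keeps the constant tight and produces the clean $\beta^2/\alpha$ numerator together with the $\alpha_{t-1}'$ index.
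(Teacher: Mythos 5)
Your proposal is correct and follows essentially the same route as the paper's proof: the same start from first-order optimality plus $m$-strong convexity of $f_t$, the same two applications of the three-point Bregman identity, the same smoothness/strong-convexity conversions of $D_{g_t}(y_t\|v_t)$ and the cross term via Young's inequality, and the same charging of $\frac{\beta^2}{2\tau}\|y_{t-1}-x_{t-1}\|^2$ against the $\frac{\alpha'_{t-1}+m}{2}$ quadratic, which is exactly what produces the $t-1$ index in the bound. The only difference is bookkeeping: the paper stores $D_{g_t}(x_t^*\|x_t)+\frac{m}{2}\|x_t^*-x_t\|^2$ inside the potential so the cancellation happens within each round's inequality, whereas you keep the leaner potential $D_h(y_t\|x_t)$ and cancel those quadratics across consecutive rounds in the final telescoping sum --- the underlying algebra is identical.
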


The significance of this result lies in its applicability to a broad spectrum of algorithms, all while preserving the optimal competitive ratio \cite{GoelLinWierman19} for squared switching costs, that is, $c(x_t,x_{t-1}) = \frac{1}{2}\norm{x_t - x_{t-1}}^2$. The proof, presented in Section \ref{proof:adv_analysis_framework}, follows a potential function technique, a popular approach in the adversarial online algorithms literature. The cornerstone of the proof is a potential function that is tailored to the algorithm considered here, yielding a competitive ratio that is specific to the $\{g_t(\cdot)\}_t$.

\subsection{A Near-Optimal Algorithm for Stochastic and Adversarial SOQO}\label{subsec:best of both}
In the previous two subsections, we demonstrated the shortcomings of the adversarial optimal algorithm in stochastic settings and the sub-optimal performance of the stochastic optimal algorithm in adversarial environments. Our aim now is \textit{to design an algorithm that has adversarial performance similar to that of \robd{} without sacrificing stochastic performance.} 

Our proposed algorithm builds on \ai{}, and is presented in Algorithm \ref{alg:laigamma}. \LAItxt{}'s design incorporates a parameter $\gamma$, such that when $\gamma = 0$, it behaves as Lazy Adaptive Interpolation, and when $\gamma = 1$, it becomes a fixed interpolation algorithm with $C = C_L$, which achieves a near-optimal competitive ratio. The expression in $\Tilde{C}_t$, specifically $ \frac{\lambda_i^A}{2}\left( \left(1 + \frac{4}{\lambda_i^A}\right)^{\frac{\gamma}{2}} - 1 \right) $, is chosen to replicate the structure of \robd{}, that is, $\frac{\lambda_{\min}^A}{2}\left( \left(1 + \frac{4}{\lambda_{\min}^A}\right)^{\frac{1}{2}} - 1\right)$, while being close to \ai{}. 

\begin{algorithm}[t]
\caption{Lazy Adaptive Interpolation $(\gamma)$}\label{alg:NOFI_AI}
\flushleft \textbf{Input:} $A\ (=P D_A P^T)$, where $D_A$ is a diagonal matrix, $\gamma \in [0,1]$
\flushleft \textbf{Initialize:} $\Tilde{\lambda}_i^T = \left(1 + \lambda_i^A + \frac{\lambda_i^A}{2}\left( \left(1 + \frac{4}{\lambda_i^A}\right)^{\frac{\gamma}{2}} - 1 \right)\right)^{-1}$ and $\Tilde{D}_t = diag\left(\Tilde{\lambda}_1^t, \ldots, \Tilde{\lambda}_d^t \right)$. Define $\Tilde{C}_T = P \Tilde{D}_T P^T$ and  $\Tilde{C}_{t}^{-1} = 2I + A - \Tilde{C}_{t+1}$ $\forall$ $t \in \{1,\ldots, T-1\}$
\begin{algorithmic}
\Procedure{\LAItxt{}}{}
\For{$t = 1,2,\ldots,T$}
\State $x_t \gets \Tilde{C}_t x_{t-1} + (I-\Tilde{C}_t) v_t$
\EndFor
\EndProcedure
\end{algorithmic}\label{alg:laigamma}
\end{algorithm}

Our choice of $\Tilde{C}_t$ is guided by two key observations from previous subsections. First, we recognize the significance of the recursion in \ai{}'s matrix sequence, which, as we will see, is instrumental in achieving near-optimal stochastic performance. Second, we take into account the dependence of \robd{} on $\lambda_{\min}^A$ in Definition \ref{def:robd}, and its effect on adversarial performance.

Our main result below characterizes \LAItxt{}'s performance in both adversarial and stochastic settings across the entire spectrum of $\gamma$ values.
\newpage
\begin{theorem}\label{thm:NOFI_AI_perf}The following two results hold for \LAItxt{} in the stochastic and adversarial settings:

\begin{enumerate} [{\normalfont(i)}]
\item In the stochastic setting \eqref{eqn:minimizer_martingale} with increments having identical covariance $\Sigma$, \LAItxt{} has \textbf{constant regret} with respect to \ai\, where $\sigma^2 = tr(\Sigma)$.
\begin{equation*}\label{thm:NOFI_AI_perf_CR}
\text{Regret}_{\LAImath{}}[1,T] \leq  \frac{\sigma^2}{4} \left( \frac{\left(1 + \frac{4}{\lambda_{\min}^A}\right)^{\frac{\gamma}{2}} - 1}{\lambda_{\min}^A + 2} \right).
\end{equation*}

\item In the adversarial setting, \LAItxt{}'s competitive ratio is upper bounded as
\begin{align*}\label{thm:NOFI_AI_perf_regret}
     \text{CR}_{\LAImath{}}\leq 1 + \max \Bigg\{\frac{1}{2}\left(\sqrt{\kappa(A)^2 +  \frac{4\kappa(A)}{\lambda_{\min}^A}} - \kappa(A) \right),\frac{2}{\lambda_{\min}^A}\left( \left(1 + \frac{4}{\lambda_{\min}^A}\right)^{\frac{\gamma}{2}} + 1 \right)^{-1} \Bigg\}
\end{align*}
where $\kappa(A)$ is the condition number of $A$. 
\end{enumerate}
\end{theorem}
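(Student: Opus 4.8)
The plan is to diagonalize everything in the eigenbasis of $A$. Since $C_L$, each $C_t$, each $\tilde C_t$, and (in the identical-covariance case) the relevant variance contributions all commute with $A$, writing $A = PD_AP^T$ decouples both claims into $d$ independent scalar problems, one per eigenvalue $\lambda = \lambda_i^A$. In each scalar problem the interpolation coefficients evolve backward under the same M\"obius map $\phi(c) = (2 + \lambda - c)^{-1}$, whose stable fixed point is the eigenvalue $c_L$ of $C_L$. I will record the elementary monotonicity facts that $\phi$ is increasing, that $\phi(c) < c$ on $(c_L,1)$ so both $\{c_t\}$ and $\{\tilde c_t\}$ decrease monotonically to $c_L$ as $t$ runs backward from $T$, and that $1 - c_L(\lambda)$ is increasing in $\lambda$ (from $0$ to $1$). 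I also note the two boundary identifications anchoring the $\gamma$-family: $\tilde C_T = (I+A)^{-1}$ when $\gamma = 0$ (so \LAItxt{} with $\gamma=0$ is \ai{}), and $\tilde C_T = C_L$ when $\gamma = 1$ (so the whole sequence collapses to the fixed point, i.e. it is \FI{} with $C = C_L$).

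For part (i) I would use the dynamic-programming performance-difference identity rather than computing both costs from scratch. By optimality and uniqueness of \ai{} (Remark \ref{LAI_uniqueness}) and the quadratic structure of the Bellman objective in \eqref{eqn:value_fn_defn}, whose Hessian in $x$ is exactly $C_t^{-1}$, the regret equals $\sum_{t=1}^T \frac12\,\E\big[(x_t^\gamma - x_t^*)^T C_t^{-1}(x_t^\gamma - x_t^*)\big]$, where $x_t^*$ is the \ai{}-optimal action taken from the state $x_{t-1}^\gamma$ actually reached by \LAItxt{}. Since both actions interpolate from the same state, $x_t^\gamma - x_t^* = (\tilde C_t - C_t)(x_{t-1}^\gamma - v_t)$, and expanding $x_{t-1}^\gamma - v_t$ into the \LAItxt{} error $e_{t-1}^\gamma = x_{t-1}^\gamma - v_{t-1}$ minus the increment $v_t - v_{t-1}$ lets the martingale property \eqref{eqn:minimizer_martingale} kill all cross terms in expectation. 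Reducing to a scalar coordinate, the per-coordinate regret becomes $\sum_t \tfrac{1}{2 c_t}(\tilde c_t - c_t)^2(\E[(e_{t-1}^\gamma)^2] + \sigma_i^2)$. The coefficient gap obeys $\tilde c_t - c_t = \phi'(\xi_t)(\tilde c_{t+1} - c_{t+1})$ and hence contracts geometrically in $T-t$ (the contraction factor being bounded away from $1$ since all coefficients stay below $(1+\lambda)^{-1}$), while the error variance $\E[(e_{t-1}^\gamma)^2]$ stays uniformly bounded because it follows the stable recursion $p_t = \tilde c_t^2(p_{t-1} + \sigma_i^2)$ with $\tilde c_t < 1$. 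Summing the resulting geometric series gives a horizon-independent per-coordinate bound proportional to the squared terminal gap $(\tilde c_T - c_T)^2$, which is controlled by $\theta_i = \tfrac{\lambda_i}{2}\big((1 + 4/\lambda_i)^{\gamma/2} - 1\big)$; summing over coordinates via $\sum_i \sigma_i^2 = \sigma^2$ and bounding the worst coordinate by $\lambda_{\min}^A$ yields the stated constant-regret bound.

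Part (ii) is a direct application of Theorem \ref{thm:adv_analysis_framework}. First I would rewrite the \LAItxt{} update in the required Bregman form by choosing the quadratic $g_t$ with Hessian $G_t = \tilde C_t^{-1} - A - I$; a one-line first-order-condition check confirms $\argmin_x f_t(x) + \tfrac12\|x - x_{t-1}\|_2^2 + D_{g_t}(x\|v_t) = \tilde C_t x_{t-1} + (I - \tilde C_t)v_t$. The recursion $\tilde C_t^{-1} = 2I + A - \tilde C_{t+1}$ then gives $G_t = I - \tilde C_{t+1}$ for $t \le T-1$ and $G_T = P\,\mathrm{diag}(\theta_i)\,P^T$ with $\theta_i$ as above, so each $G_t \succeq 0$ and the framework applies with $m = \lambda_{\min}^A$, $\alpha = \beta = 1$, $\alpha'_t = \lambda_{\min}(G_t)$, $\beta'_t = \lambda_{\max}(G_t)$. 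It remains to evaluate the two extrema. Using $\tilde c_t \ge c_L$, the monotonicity of $1 - c_L(\lambda)$, and $\theta_i \le 1 - c_L(\lambda_i)$ for $\gamma \le 1$, I get $\max_t \beta'_t \le 1 - c_L(\lambda_{\max}^A)$; dividing by $\lambda_{\min}^A$ and rewriting through $\kappa(A) = \lambda_{\max}^A/\lambda_{\min}^A$ reproduces the first term of the max. For the second extremum, monotonicity of the coefficient sequences in both $t$ and $\lambda$ shows the global minimum eigenvalue over all $G_t$ is attained at $G_T$ in the $\lambda_{\min}^A$ coordinate, i.e. $\min_t \alpha'_{t-1} = \theta_{\min} = \tfrac{\lambda_{\min}^A}{2}\big((1+4/\lambda_{\min}^A)^{\gamma/2}-1\big)$; substituting into $\tfrac{\beta^2/\alpha}{\min_t\alpha'_{t-1}+m}$ and simplifying $\theta_{\min} + \lambda_{\min}^A = \tfrac{\lambda_{\min}^A}{2}\big((1+4/\lambda_{\min}^A)^{\gamma/2}+1\big)$ gives exactly the second term.

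The main obstacle is in part (i): converting the qualitative ``geometric decay plus bounded variance'' argument into the exact closed-form constant. This requires pinning down the contraction rate of the M\"obius recursion and the steady-state error variance simultaneously, and then showing the per-coordinate bound is monotone in $\lambda$ so that the $\lambda_{\min}^A$ coordinate dominates; the interplay between the decaying coefficient gap and the growing-then-saturating error variance is what makes the bookkeeping delicate. In part (ii) the only nontrivial points are the ordering lemmas identifying which time index and eigenvalue attain $\max_t\beta'_t$ and $\min_t\alpha'_{t-1}$ (in particular verifying $\theta_{\min} \le 1 - \tilde c_T$ at the $\lambda_{\min}^A$ coordinate, so the terminal matrix $G_T$ is genuinely the minimizer) and confirming $G_t \succeq 0$ throughout, so that Theorem \ref{thm:adv_analysis_framework} is legitimately applicable.
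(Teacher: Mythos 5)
Your part (ii) is essentially the paper's own proof: the paper likewise rewrites \LAItxt{} as $x_t = \argmin_x f_t(x) + \tfrac12\|x-x_{t-1}\|_2^2 + \tfrac12(x-v_t)^T(\Tilde{C}_t^{-1}-I-A)(x-v_t)$, uses the recursion to order $C_L \preccurlyeq \Tilde C_1 \preccurlyeq \cdots \preccurlyeq \Tilde C_T$, identifies $\max_t\beta_t'$ with the $\lambda_{\max}^A$ eigenvalue of $C_L^{-1}-(I+A)$ and $\min_t\alpha_{t-1}'$ with $\theta_{\min}=\tfrac{\lambda_{\min}^A}{2}\big((1+4/\lambda_{\min}^A)^{\gamma/2}-1\big)$, and feeds these into Theorem \ref{thm:adv_analysis_framework}; your extrema computations, the check $G_t\succcurlyeq 0$, and the final simplification all match, so this half is correct and the same route.

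Part (i) is where you genuinely diverge, and where the gap is. The paper never forms the advantage decomposition you propose. Instead it proves an upper bound on \LAItxt{}'s \emph{cost} by induction (Proposition \ref{prop:nofi_dp_cost}), $\E[\text{Cost}_{\LAImath{}}]\le\tfrac12\sum_t\E[(v_t-v_{t-1})^T(I-\Tilde C_t)(v_t-v_{t-1})]$, subtracts \ai{}'s \emph{exact} cost from Theorem \ref{thm:LAI_total_cost}, so that the regret is bounded by $\tfrac12\sum_t\E[(v_t-v_{t-1})^T(C_t-\Tilde C_t)(v_t-v_{t-1})]$ --- a quantity \emph{linear} in the coefficient gap and involving only the increments, with no tracking-error term --- and then applies the geometric gap bound $\lambda_i^t-\Tilde\lambda_i^t\le\theta_i(\lambda_i^T)^{2(T-t+1)}$ (Lemma \ref{lemma:nofi_dp_eigenvalues}); summing the geometric series gives precisely $\theta_i/\big(\lambda_i^A(\lambda_i^A+2)\big)=\tfrac{(1+4/\lambda_i^A)^{\gamma/2}-1}{2(\lambda_i^A+2)}$, i.e.\ exactly the stated constant. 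Your performance-difference identity is correct (the Hessian of the Bellman objective is indeed $C_t^{-1}$, the deviation is $(\Tilde C_t-C_t)(x_{t-1}^\gamma-v_t)$, the martingale property kills the cross terms, and the error recursion $p_t=\Tilde c_t^2(p_{t-1}+\sigma_i^2)$ is right), and it is in fact an \emph{exact} expression for the regret, so in principle it can only be tighter. But the bound your bookkeeping produces is proportional to the \emph{squared} gap, roughly $\theta_i^2(c_T\Tilde c_T)^2\sigma_i^2/\big(2c_L(1-\Tilde c_T^2)(1-(c_T\Tilde c_T)^2)\big)$ per coordinate, which is a different function of $(\lambda_i^A,\gamma)$ than the theorem's $\theta_i$-linear constant. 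To conclude the theorem \emph{as stated} you would still have to prove that this expression is dominated by $\tfrac{\sigma_i^2}{4}\cdot\tfrac{(1+4/\lambda_i^A)^{\gamma/2}-1}{\lambda_i^A+2}$ uniformly in $\lambda>0$, $\gamma\in[0,1]$ (it appears to be, e.g.\ at $\gamma=1$ the ratio is below $c_L/(1+c_L)<1/2$, but this needs an argument), and separately establish the monotonicity in $\lambda$ used to reduce to the $\lambda_{\min}^A$ coordinate. That verification is exactly the step you flag as the ``main obstacle'' and do not carry out, so the proposal as written does not yet prove the stated bound; the paper's cost-comparison route sidesteps the whole difficulty because its bound is linear in the gap and collapses to the stated closed form with no steady-state-variance factor at all.
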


The guarantee on adversarial performance, proved in Appendix \ref{proof:NOFI_AI_CR_proofs}, results from this specific choice of $\Tilde{C}_T$, coupled with the competitive analysis framework discussed in Theorem \ref{thm:adv_analysis_framework}. The proof of stochastic performance leverages the structure of the recursion $\Tilde{C}_{t}^{-1} = 2I + A - \Tilde{C}_{t+1}$ to show constant regret for any interpolation algorithm satisfying it. We show this in detail in Appendix \ref{proof:NOFI_AI_perf_stoch}.

Recall the importance of the case of $\lambda^A_i \ll 1$ discussed in Remark \ref{remark:low_lambda}. We study $\text{CR}_{\LAImath{}}$ in this regime and contrast it with the adversarial optimal \robd{} (in terms of dependence of the competitive ratio on $\lambda^A_i$). Below, we present our \textit{\textbf{best-of-both-worlds}} result, with the proof in Appendix \ref{proof:NOFI_AI_CR_proofs}.

\begin{corollary}\label{corr:LAI_best_of_both}
For the $\lambda_{\max}^A \ll 1$ regime in the adversarial setting, the \LAItxt{} algorithm with $\gamma = 1$ achieves a competitive ratio of
\begin{equation*}
    \text{CR}_\nofi{} \leq 1 + \sqrt{\frac{\kappa(A)}{\lambda_{\min}^A}}
\end{equation*}
while simultaneously achieving a \textbf{constant regret} in the stochastic setting \eqref{eqn:minimizer_martingale}.
\end{corollary}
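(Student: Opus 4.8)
The plan is to obtain the corollary as a direct specialization of Theorem \ref{thm:NOFI_AI_perf} to $\gamma = 1$, followed by two elementary scalar bounds that collapse the maximum in part (ii) into the claimed expression. First I would invoke Theorem \ref{thm:NOFI_AI_perf}(i) at $\gamma = 1$, which gives
\begin{equation*}
\text{Regret}_{\nofi{}}[1,T] \leq \frac{\sigma^2}{4}\left(\frac{\sqrt{1 + \frac{4}{\lambda_{\min}^A}} - 1}{\lambda_{\min}^A + 2}\right).
\end{equation*}
The right-hand side depends only on $\sigma^2$ and the spectrum of $A$ and is independent of the horizon $T$, so the constant-regret half of the statement follows immediately.

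For the competitive ratio, I would start from Theorem \ref{thm:NOFI_AI_perf}(ii) with $\gamma = 1$ and bound the two terms inside the maximum separately, showing each is at most $\sqrt{\kappa(A)/\lambda_{\min}^A}$. For the first term, the key step is the subadditivity of the square root, $\sqrt{a+b} \leq \sqrt{a} + \sqrt{b}$, applied with $a = \kappa(A)^2$ and $b = 4\kappa(A)/\lambda_{\min}^A$; this yields $\sqrt{\kappa(A)^2 + 4\kappa(A)/\lambda_{\min}^A} \leq \kappa(A) + 2\sqrt{\kappa(A)/\lambda_{\min}^A}$, so that after subtracting $\kappa(A)$ and halving, the first term is bounded exactly by $\sqrt{\kappa(A)/\lambda_{\min}^A}$. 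For the second term, I would lower bound the denominator via $\left(1 + \frac{4}{\lambda_{\min}^A}\right)^{1/2} + 1 \geq \frac{2}{\sqrt{\lambda_{\min}^A}}$, giving an upper bound of $1/\sqrt{\lambda_{\min}^A}$, which is at most $\sqrt{\kappa(A)/\lambda_{\min}^A}$ because $\kappa(A) \geq 1$. Taking the maximum over the two bounds then produces $\text{CR}_\nofi{} \leq 1 + \sqrt{\kappa(A)/\lambda_{\min}^A}$.

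Since both bounds hold for every positive definite $A$, the stated inequality is in fact valid beyond the $\lambda_{\max}^A \ll 1$ regime; the regime is highlighted only to place the result in context, as this is where the competitive ratios of interest (e.g. that of \robd{}) grow like $1/\sqrt{\lambda_{\min}^A}$ and the best-of-both-worlds guarantee becomes meaningful (cf. Remark \ref{remark:low_lambda}). I expect no substantial obstacle here, since essentially all of the content is already carried by Theorem \ref{thm:NOFI_AI_perf}; the only care required is in orienting the two scalar inequalities so that both terms in the maximum are controlled by the same quantity, with the subadditivity bound on the first term being the one that must match the target exactly rather than merely asymptotically.
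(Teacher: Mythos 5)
Your proposal is correct, and its high-level structure matches the paper's: both obtain the corollary by specializing Theorem \ref{thm:NOFI_AI_perf} to $\gamma = 1$, getting the constant-regret half directly from part (i) and collapsing the maximum in part (ii) to $\sqrt{\kappa(A)/\lambda_{\min}^A}$. The execution differs in a way worth noting. The paper's argument (Corollary \ref{corr:NOFI_AI_cr_order}) is asymptotic: it uses $\lambda_{\max}^A \ll 1 \implies \tfrac{1}{\lambda_{\min}^A} \gg \kappa(A)$ to replace $\kappa(A)^2 + \tfrac{4\kappa(A)}{\lambda_{\min}^A}$ by $\tfrac{4\kappa(A)}{\lambda_{\min}^A}$ and $\bigl(1+\tfrac{4}{\lambda_{\min}^A}\bigr)^{\gamma/2}+1$ by $\bigl(\tfrac{4}{\lambda_{\min}^A}\bigr)^{\gamma/2}$, writing these leading-order simplifications as equalities; the stated regime is therefore essential to its derivation. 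Your two scalar inequalities --- subadditivity $\sqrt{\kappa(A)^2 + 4\kappa(A)/\lambda_{\min}^A} \leq \kappa(A) + 2\sqrt{\kappa(A)/\lambda_{\min}^A}$ for the first term, and the denominator bound $\sqrt{1+4/\lambda_{\min}^A}+1 \geq 2/\sqrt{\lambda_{\min}^A}$ together with $\kappa(A)\geq 1$ for the second --- are exact, so your version of the bound is rigorous and holds for every positive definite $A$, with the $\lambda_{\max}^A \ll 1$ regime serving only as motivation, exactly as you observe. This is a genuine strengthening: it removes the approximation step and shows the regime hypothesis is not needed for the inequality itself. What the paper's asymptotic route buys in exchange is coverage of all $\gamma \in (0,1]$ at once, exhibiting the scaling $2^{1-\gamma}/(\lambda_{\min}^A)^{1-\gamma/2}$ of the second term and hence how the adversarial guarantee degrades as $\gamma$ decreases, which your $\gamma=1$ argument does not address (though the same exact-inequality technique would extend to it).
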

Comparing the above result with \robd{}'s competitive ratio of $O(1/\sqrt{\lambda_{\min}^A})$ in the $\lambda_{\min}^A \ll 1$ regime highlights that \nofi{} achieves a near-optimal competitive ratio in the adversarial setting, for well-behaved hitting costs, while simultaneously having near-optimal performance in the martingale setting.

\section{Numerical Experiments}\label{sec:num_exps}
To further explore SOQO in stochastic and adversarial settings, we conduct empirical experiments to evaluate the performance of our algorithms in a range of environments. Our experiments fall into two main categories: first, purely stochastic experiments where we provide empirical evidence for two primary claims – the inferior performance of \robd{} in comparison to \LAItxt{} and \ai{}, and the robustness of our algorithms to distribution shifts and variations in tail behavior (light or heavy). Second, we delve into mixed adversarial-stochastic experiments that combine elements of both adversarial and stochastic scenarios, examining how our algorithms perform in this hybrid setting. Although we offer a summarized overview of our experimental setup here, detailed procedures can be found in Appendix \ref{numerical_exp_details}. 

In all of our experiments, we maintain a consistent action space $\R^{10}$ and employ the matrix $A$ with eigenvalues selected from one of three sequences: $\left\{0.3^i \right\}_{i=0}^9$, $\left\{0.45^i\right\}_{i=0}^9$, or $\left\{0.5^i\right\}_{i=0}^9$. The sequence of minimizers, denoted $\{v_t\}_t$, is adjusted according to the specific type of experiment under consideration. In both stochastic and adversarial environments, we compare the \robd{} algorithm to the \LAItxt{} algorithm with $\gamma = 1$. The specific value of $\gamma$ is chosen to emphasize that even the adversarial extreme of \LAItxt{} demonstrates exceptional stochastic performance.

\subsection{Experiments in purely stochastic environments}
Our stochastic experiments are categorized into three distinct groups. The first category encompasses light tail distributions, featuring five zero-mean distributions: uniform, normal, Laplace, logistic, and Gumbel. In this particular experiment, we introduce a new distribution every $T/5$ rounds, underscoring the robustness of our results to distribution shifts. The following two categories focus on heavy-tail distributions, specifically log-normal and Pareto distributions. Across all these experiments, the minimizer sequence adheres to a martingale structure, where the increments exhibit uniform variance and are drawn from the respective distribution. Our analysis involves plotting the expected total regret (relative to \ai{}) for both $\left\{0.3^i \right\}_{i=0}^9$ and $\left\{0.5^i \right\}_{i=0}^9$ over various horizon values $T \in \{1,2,\ldots,100\}$. We present the results as the sample mean derived from $N=1000$ runs, accompanied by the $95^{th}$ percentile for added clarity.

The trends in Fig. \ref{fig:stoch_sims} validate our claims in Theorem \ref{thm:R-OBD_lin_regret} and Theorem \ref{thm:NOFI_AI_perf} (i) regarding \robd{}'s linear regret and \nofi{}'s constant regret. In fact, we observe that the regret of \nofi{} is virtually zero in contrast to that of \robd{}, demonstrating the superiority of \LAItxt{} in practice. The insensitivity of \LAItxt{} to the form of the distribution is further highlighted by its consistent near-optimal stochastic performance in all simulated stochastic settings. In particular, we would like to stress the stability demonstrated by \LAItxt{} under shifting distributions and in heavy-tailed stochastic environments. The negligible regret shown by \nofi{} establishes the superiority of \LAItxt{} over \robd{} in any distribution with finite variance.

\begin{figure}[t!p]
    \centering
    \begin{subfigure}[b]{0.3\textwidth}
        \centering
        \includegraphics[height=1.35in]{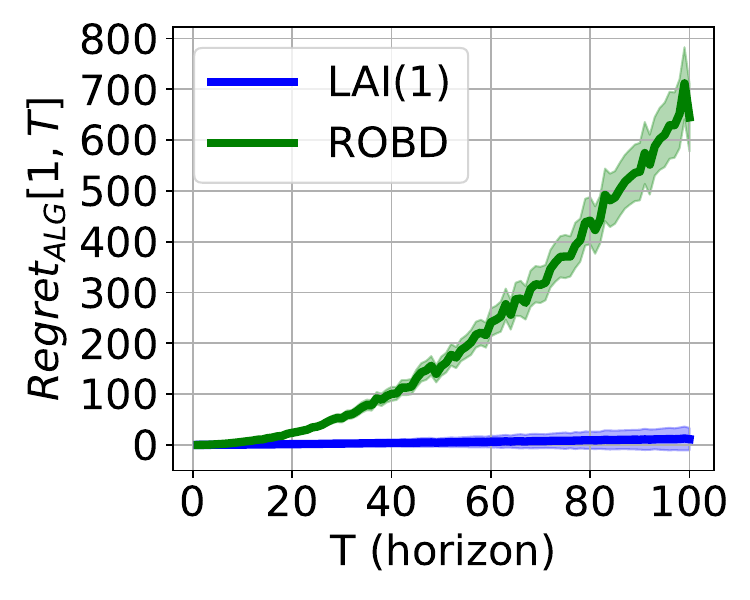}
        \caption{light tail with $\lambda_i^A = 0.3^{i-1}$}
        \label{fig:stoch_sim_0.3_light}
    \end{subfigure}
    \begin{subfigure}[b]{0.3\textwidth}
        \centering
        \includegraphics[height=1.35in]{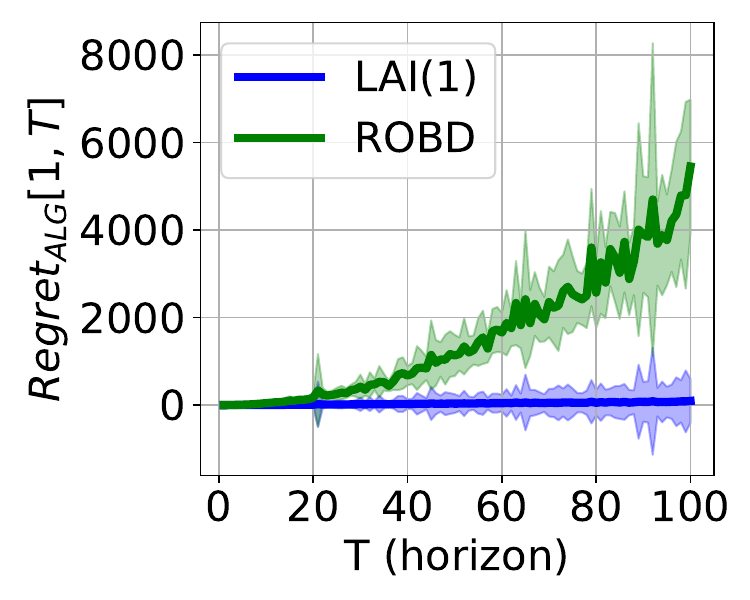}
        \caption{log-normal with $\lambda_i^A = 0.3^{i-1}$}
        \label{fig:stoch_sim_0.3_lognorm}
    \end{subfigure}
    \begin{subfigure}[b]{0.3\textwidth}
        \centering
        \includegraphics[height=1.35in]{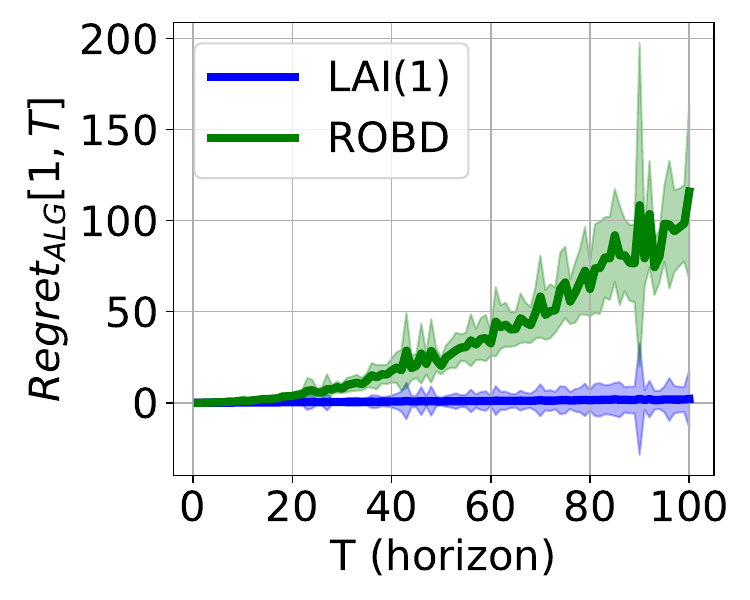}
        \caption{pareto with $\lambda_i^A = 0.3^{i-1}$}
        \label{fig:stoch_sim_0.3_pareto}
    \end{subfigure}
    
    \begin{subfigure}[b]{0.3\textwidth}
        \centering
        \includegraphics[height=1.35in]{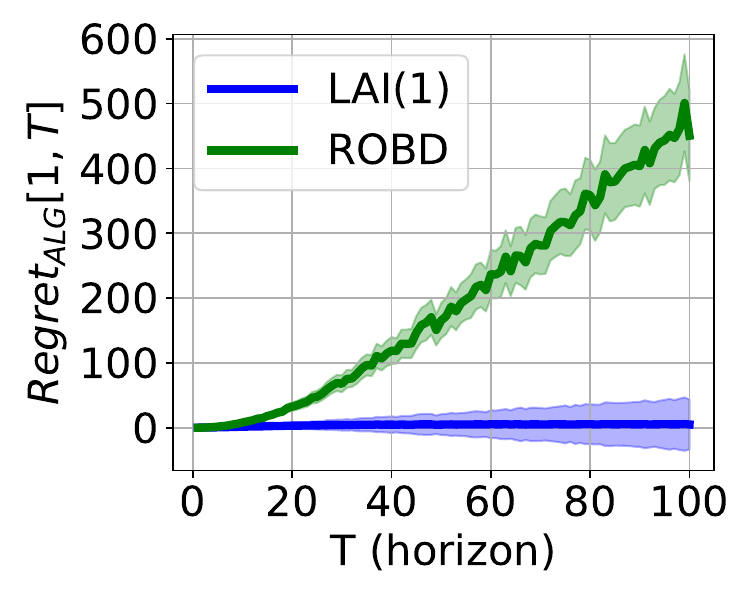}
        \caption{light tail with $\lambda_i^A = 0.5^{i-1}$}
        \label{fig:stoch_sim_0.5_light}
    \end{subfigure}
    \begin{subfigure}[b]{0.3\textwidth}
        \centering
        \includegraphics[height=1.35in]{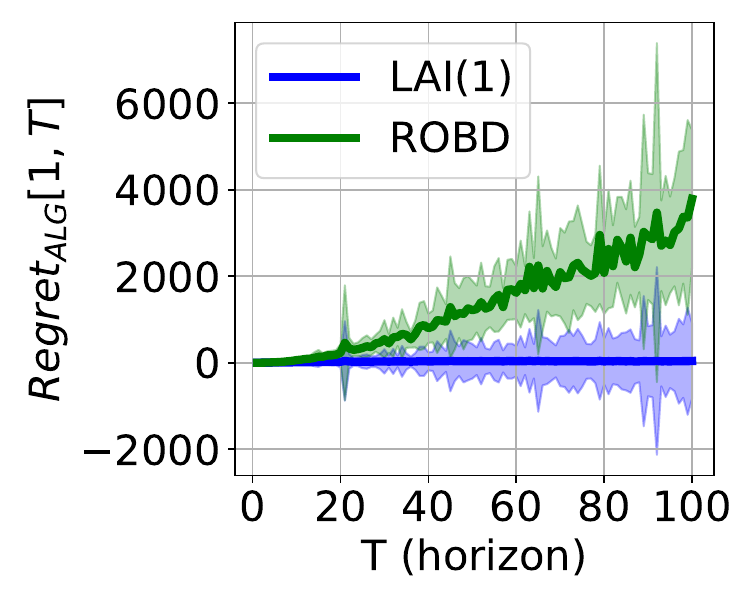}
        \caption{log-normal with $\lambda_i^A = 0.5^{i-1}$}
        \label{fig:stoch_sim_0.5_lognorm}
    \end{subfigure}
    \begin{subfigure}[b]{0.3\textwidth}
        \centering
        \includegraphics[height=1.35in]{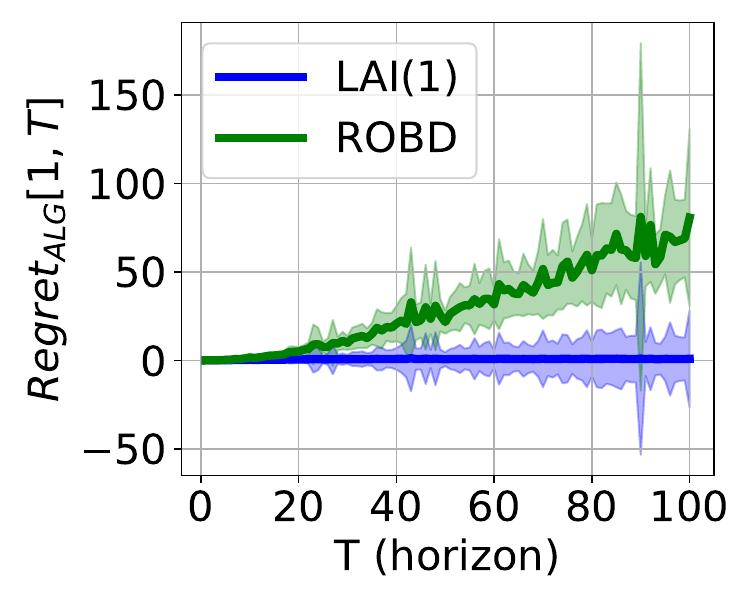}
        \caption{pareto with $\lambda_i^A = 0.5^{i-1}$}
        \label{fig:stoch_sim_0.5_pareto}
    \end{subfigure}
    \caption{Regret of \nofi{} and \robd{} for martingale minimizers with light and heavy tails}
    \label{fig:stoch_sims}
\end{figure}

\subsection{Experiments in stochastic and adversarial environments}
In this series of experiments, we introduce adversarial minimizers into a martingale minimizer sequence, with the extent of adversarial influence determined by the parameter known as the \textit{adversarial percentage} denoted $(p)$. This parameter spans from $0$ (indicating a fully stochastic scenario) to $100$ (representing a fully adversarial scenario). To facilitate meaningful comparisons across stochastic and adversarial environments, we calculated the ratio of an online algorithm's total cost to that of \ai{}. This normalization technique helps account for the differences in the orders of magnitude between the total costs in stochastic and adversarial settings, allowing us to evaluate the relative performance of various algorithms effectively.

In these experiments, we explore scenarios involving three different sequences of eigenvalues: $\left\{0.3^i \right\}_{i=0}^9$, $\left\{0.45^i \right\}_{i=0}^9$, and $\left\{0.5^i \right\}_{i=0}^9$. The key observation is that, on the adversarial end, the relative performance of \nofi{} and \robd{} is consistent across different $A$ matrices. However, it becomes evident that the stochastic performance of \robd{} deteriorates significantly when a smaller $A$ matrix is used. 

As we gradually intensify the adversarial characteristics of the environment, we notice a relatively smooth shift from \nofi{} to \robd{} in terms of identifying the ``superior algorithm." In fact, until a certain threshold of adversarial influence, approximately around $20$\%, \nofi{} surpasses \robd{} in performance. This intriguing observation prompts a more in-depth analysis of SOQO in a stochastic environment with adversarial contamination.

\subsection{Experiments in stochastic and adversarial environments}
In this series of experiments, we introduce adversarial minimizers into a martingale minimizer sequence, with the extent of adversarial influence determined by the parameter known as the \textit{adversarial percentage} denoted $(p)$. This parameter spans from $0$ (indicating a fully stochastic scenario) to $100$ (representing a fully adversarial scenario). To facilitate meaningful comparisons across stochastic and adversarial environments, we calculated the ratio of an online algorithm's total cost to that of \ai{}. This normalization technique helps account for the differences in the orders of magnitude between the total costs in stochastic and adversarial settings, allowing us to evaluate the relative performance of various algorithms effectively.

\begin{figure}[t!p]
    \centering
    \begin{subfigure}[b]{0.3\textwidth}
        \centering
        \includegraphics[height=1.35in]{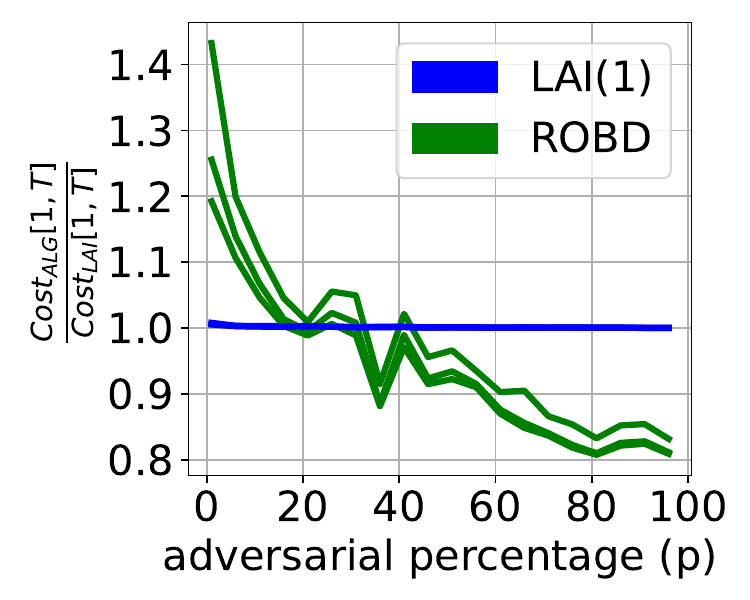}
        \caption{normal to adversarial}
        \label{fig:adv_sim_light}
    \end{subfigure}
    \begin{subfigure}[b]{0.3\textwidth}
        \centering
        \includegraphics[height=1.35in]{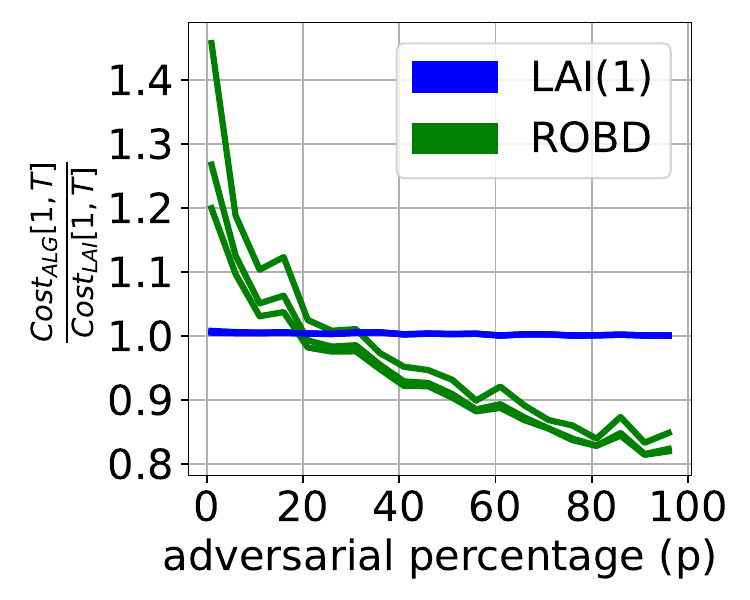}
        \caption{log-normal to adversarial}
        \label{fig:adv_sim_lognorm}
    \end{subfigure}
    \begin{subfigure}[b]{0.3\textwidth}
        \centering
        \includegraphics[height=1.35in]{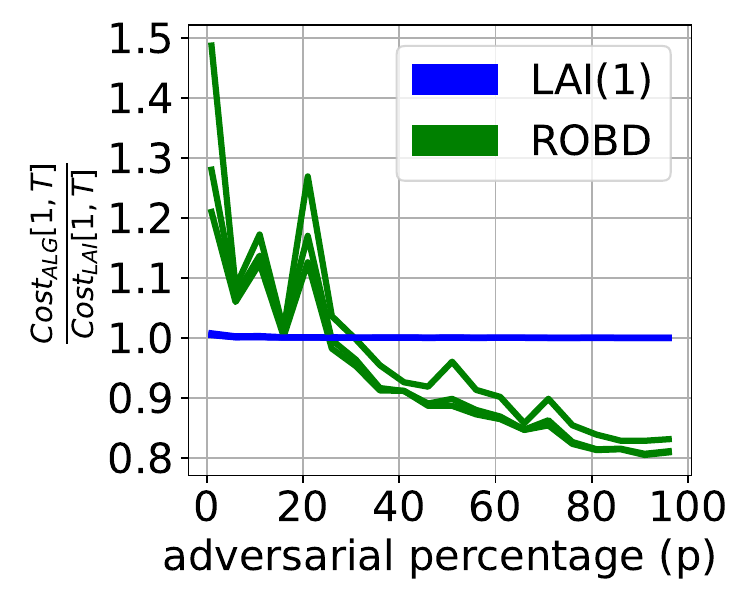}
        \caption{pareto to adversarial}
        \label{fig:adv_sim_pareto}
    \end{subfigure}
    \caption{Behavior of \nofi{} and \robd{} for a mixed sequence of minimizers. In each figure, the topmost plot corresponds to $\left\{0.3^i \right\}_{i=0}^9$, the middle one represents $\left\{0.45^i \right\}_{i=0}^9$, and the bottom-most plot pertains to $\left\{0.5^i \right\}_{i=0}^9$.}
    \label{fig:adv_sims}
\end{figure}
\section{Concluding Remarks}
This research broadens the horizons of smoothed online optimization in two unexplored dimensions: the examination of SOQO within stochastic contexts and the concept of ``best of both worlds" algorithms in this domain. These directions hold immense potential, especially the stochastic analysis beyond quadratic costs. Within the framework discussed in this work, two extensions are especially intriguing: (i) multi-agents systems with costs coupling the agents and (ii) learning the $A$ matrix.

\bibliography{references}
\bibliographystyle{acm}

\newpage
\begin{appendices}
\section{Detailed Literature Review}\label{sec:lit_review}

\subsection{Stochastic Smoothed Online Optimization}
It is noteworthy that the literature in smoothed online optimization focuses on achieving strong performance in adversarial environments, without addressing the possibility of a stochastic environment, let alone seeking best-of-both-worlds algorithms. In the LQR control literature, there have been some works that consider noisy inputs \cite{ChenWierman15,ChenWierman16}. However, their noise model and dynamics are very specific, while our set-up considers stochasticity in more generality. In other problems involving dynamical systems, stochastic inputs have been considered, with prominent example being staff/inventory management \cite{Gandhi2010,Jennings1996} and power systems \cite{Urgaonkar2011,Koutsopoulos2011}.

In particular, within the realm of multi-arm bandits (MAB), \cite{BS12} presents an algorithm that provides robust performance guarantees for both Independent and Identically Distributed (IID) reward sequences as well as adversarial reward sequences. In the context of Online Convex Optimization (OCO) literature, recent attention~\cite{ChenTu23,Sachs22,Sachs23} has been directed towards bridging the gap between an IID environment, which is relevant to stochastic (stationary) optimization, and an adversarial environment, representing the traditional OCO problem. 
However, it is important to note that these works \textit{do not address switching costs}, and the assumptions regarding hitting costs and action spaces differ between OCO and ours.

\subsection{Adversarial Smoothed Online Optimization}
The body of research on smoothed online optimization is predominantly centered around adversarial results and can be broadly divided into two distinct communities. The first community examines it through the lens of Metrical Task Systems and employs the \textit{competitive ratio} as the performance metric. Works of \cite{LinWiermanAndrew11, BansalGupta15} gave the first results considering the one-dimensional case. Further research \cite{ChenWierman18} established that assumptions stronger than convexity, like $\alpha$-polyhedrality or $m-$strong convexity are required for meaningful results. \cite{ZhangYang21} proved for $\alpha$-polyhedral hitting costs and $\ell_1$-norm switching costs that following the minimizer has a very good competitive ratio of $\left(\max\left\{ 1,\frac{2}{\alpha} \right\}\right)$. \cite{GoelWierman19} gave an order-optimal algorithm \robd{} for $m-$strongly convex hitting costs and $\ell_2$-norm switching costs with a competitive ratio of $1 + \mathcal{O}\big(\frac{1}{\sqrt{m}}\big)$. The analytical tools present in these works, like potential function analysis and receding horizon control, focus on guaranteeing worst-case performance, and do not take advantage of structure in the hitting costs.

Apart from the competitive ratio, regret with respect to the dynamic, hindsight optimal sequence of decisions has been another important performance metric, popular in the OCO literature.
In the adversarial setting, \cite{AndrewWierman13} showed that for any online algorithm, it is impossible to simultaneously achieve a finite competitive ratio and a sublinear regret. 
Subsequent research aimed to establish a dynamic regret that scales in relation to the path-length, denoted as $L_T$, of the optimal action sequence. For scenarios involving $\alpha$-polyhedral convex hitting costs and $\ell_2$-norm switching costs, \cite{ChenWierman18} demonstrated a dynamic regret of $\mathcal{O}\left( \sqrt{T L_T} \right)$ for a modified version of OBD. 
In cases with strongly convex hitting costs and squared $\ell_2$-norm switching costs, \cite{LiGuannan18} established a lower bound of $\mathcal{O}\left(L_T\right)$ on the dynamic regret. 
This bound was closely matched by \cite{GoelLinWierman19}. Finally, \cite{GoelLinWierman19} demonstrated that \robd{} achieves a dynamic regret of $\mathcal{O}\left( \sqrt{T L_T} \right)$ in these settings. The problem set-up in these results is different than the MTS-style ones, and there is very little on their connection \cite{AndrewWierman13}.

It is worth noting that these dynamic regret results rely on the strong assumption that the action space has a bounded diameter. Since we are dealing with metric switching costs, the absence of a boundedness assumption makes the problem much more difficult. Considering we have a stochastic setting in this work, the absence of such assumptions generalizes our results to any distribution over $\R^d$ with finite second moment (light or heavy tail), something one cannot achieve in the OCO setting.

\section{Main Proofs} \label{sec:theorem proofs}
In this section, we shed light on the analysis techniques employed in this work, through detailed proofs of some of our main contributions. These include the stochastic optimality of \ai{}, the poor performance of \robd{} algorithm, the near-optimality of \LAItxt{} in both stochastic and adversarial settings, and our general adversarial analysis framework.

\subsection{Proof of Theorem \ref{thm:LAI_total_cost}}\label{proof:LAI_perf}
To prove these results, we show that \ai{} is a dynamic programming solution in our stochastic setting \eqref{eqn:minimizer_martingale}; thus proving it to be an optimal online algorithm. The first step is to show the optimality of the action at round $T$, given by the following lemma.

\begin{lemma}\label{lemma: val_fn_T}
The online optimal action at round $T$ is
\begin{align*}
    x^{\ai{}}_T = C_T x^{\ai{}}_{T-1} + \left(I - C_T \right) v_T
\end{align*}
and, consequently, the value function \eqref{eqn:value_fn_defn} for round $T$ is
\begin{align*}
    V_T\left(x^{\ai{}}_{T-1},v_T \right) = \frac{1}{2}\left(x^{\ai{}}_{T-1} - v_T \right)^T \left(I - C_T \right) \left(x^{\ai{}}_{T-1} - v_T \right)
\end{align*}
\end{lemma}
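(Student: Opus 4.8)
The plan is to treat round $T$ as the terminal stage of the dynamic program, where there is no cost-to-go, so that the value function in \eqref{eqn:value_fn_defn} reduces to the immediate cost $V_T(x^{\ai{}}_{T-1},v_T) = \min_{x} \big\{ f_T(x) + c(x, x^{\ai{}}_{T-1}) \big\}$, with the convention $V_{T+1} \equiv 0$. Since $f_T(x) = \frac12 (x-v_T)^T A (x - v_T)$ with $A \succ 0$ and $c(x, x^{\ai{}}_{T-1}) = \frac12 \|x - x^{\ai{}}_{T-1}\|_2^2$, the objective is a strictly convex quadratic in $x$, so it has a unique minimizer obtained by setting the gradient to zero.

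First I would compute the stationarity condition $A(x - v_T) + (x - x^{\ai{}}_{T-1}) = 0$, which gives $(I+A)x = x^{\ai{}}_{T-1} + A v_T$, hence $x_T = (I+A)^{-1} x^{\ai{}}_{T-1} + (I+A)^{-1} A \, v_T$. Recognizing $C_T = (I+A)^{-1}$ and using the elementary identity $(I+A)^{-1}A = I - (I+A)^{-1} = I - C_T$, this rewrites exactly as the claimed interpolation $x_T^{\ai{}} = C_T x^{\ai{}}_{T-1} + (I - C_T) v_T$.

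To obtain the value function, I would substitute this minimizer back into the objective. Writing $w := x^{\ai{}}_{T-1} - v_T$, the two residuals simplify cleanly to $x_T^{\ai{}} - v_T = C_T w$ and $x_T^{\ai{}} - x^{\ai{}}_{T-1} = -(I - C_T) w$, where I use that $C_T$ is symmetric (which follows from $A = A^T$). Substituting and factoring out $w$ then yields $V_T = \tfrac12\, w^T\big[ C_T A C_T + (I - C_T)^2 \big] w$.

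The one step requiring care — and the only place where the specific form $C_T = (I+A)^{-1}$ is used — is collapsing the bracketed matrix to $I - C_T$. I would substitute $A = C_T^{-1} - I$ to get $C_T A C_T = (I - C_T) C_T = C_T - C_T^2$, expand $(I - C_T)^2 = I - 2C_T + C_T^2$, and add the two to obtain $I - C_T$. This gives $V_T(x^{\ai{}}_{T-1}, v_T) = \tfrac12 (x^{\ai{}}_{T-1} - v_T)^T (I - C_T)(x^{\ai{}}_{T-1} - v_T)$, as claimed. I would also note in passing that $I - C_T = A(I+A)^{-1} \succ 0$, confirming the value function is a valid convex quadratic in $x^{\ai{}}_{T-1}$; this positivity is precisely the structural feature that will let the subsequent backward induction on $t$ proceed.
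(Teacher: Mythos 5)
Your proposal is correct and follows essentially the same route as the paper: both derive the first-order condition $A(x-v_T)+(x-x^{\ai{}}_{T-1})=0$, identify $C_T=(I+A)^{-1}$ via the identity $(I+A)^{-1}A=I-C_T$, and substitute the minimizer back to collapse $C_T A C_T+(I-C_T)^2$ to $I-C_T$ (the paper writes this same algebra in terms of $(A+I)^{-1}A(A+I)^{-1}+(A+I)^{-1}A^2(A+I)^{-1}=(A+I)^{-1}A$, while you substitute $A=C_T^{-1}-I$, but these are the identical computation). Your closing remark that $I-C_T\succ 0$ is a nice bonus that the paper leaves implicit.
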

At round $T$, the optimal decision is
\begin{align}
    x_T = \argmin_x \frac{1}{2}(x-v_T)^T A (x-v_T) + \frac{1}{2}\|x - x^{\ai{}}_{T-1}\|_2^2.
\end{align}
\begin{proof}
Differentiating the objective,
\begin{align}
    A(x-v_T) + (x-x^{\ai{}}_{T-1}) = 0
\end{align}
gives the online optimal action at round $T$ as
\begin{align}
    x^{\ai{}}_T &= (A+I)^{-1} x^{\ai{}}_{T-1} + (A+I)^{-1} A v_T\\
    &= (A+I)^{-1} x^{\ai{}}_{T-1} + (I - (A+I)^{-1}) v_T\\
    &= C_T x^{\ai{}}_{T-1} + (I-C_T)v_T.
\end{align}
Putting $x^{\ai{}}_T$ back into the current cost gives the value function at $T$,
\begin{align}
\begin{split}
V_{T}(x^{\ai{}}_{T-1}, v_T) =& \frac{1}{2}(x_T - v_T)^T A (x_T - v_T) +\frac{1}{2}\|x_T - x_{T-1}\|_2^2
\end{split}\\
\begin{split}
=& \frac{1}{2} (x^{\ai{}}_{T-1} - v_T)^T \left((A+I)^{-1} A (A+I)^{-1} \right) (x^{\ai{}}_{T-1} - v_T)\\
&+ \frac{1}{2} (x^{\ai{}}_{T-1} - v_T)^T \left( (A+I)^{-1} A^2 (A+I)^{-1} \right) (x^{\ai{}}_{T-1} - v_T)
\end{split}\\
=& \frac{1}{2} (x^{\ai{}}_{T-1} - v_T)^T \left(  (A+I)^{-1} A \right) (x^{\ai{}}_{T-1} - v_T)\\
=& \frac{1}{2} (x^{\ai{}}_{T-1} - v_T)^T \left(  I - C_T \right) (x^{\ai{}}_{T-1} - v_T) \label{AI_final_round_cost}
\end{align}
\end{proof}
We next present the main element of the proof of \ai{}'s optimality in the form of the following proposition, from which Theorem \ref{thm:LAI_total_cost} directly follows.

\begin{proposition}\label{prop:AI_val_fn}
Consider hitting costs $\frac{1}{2} (x-v_t)^T A (x - v_t)$ and switching cost is $\frac{1}{2}\|x_t - x_{t-1}\|_2^2$ with the sequence of minimizers being a martingale, that is \eqref{eqn:minimizer_martingale}. The exact value function at $T-t$ is
\begin{equation*}
\begin{split}
    V_{T-t}(x^{\ai{}}_{T-t-1}, v_{T-t}) =& \frac{1}{2}(x^{\ai{}}_{T-t-1}-v_{T-t})^T (I - C_{T-t}) (x^{\ai{}}_{T-t-1}-v_{T-t})\\
    &+ \sum_{s=0}^{t-1} \frac{1}{2} \E\left[(v_{T-s} - v_{T-s-1})^T (I-C_{T-s}) (v_{T-s} - v_{T-s-1}) | \mathcal{F}_{T-t} \right]
\end{split}
\end{equation*}
\end{proposition}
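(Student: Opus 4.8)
The plan is to establish the claimed closed form by backward induction on $t$, feeding the inductive hypothesis into the Bellman recursion \eqref{eqn:value_fn_defn} and exploiting both the martingale hypothesis \eqref{eqn:minimizer_martingale} and the defining recursion $C_{T-t-1}^{-1} = 2I + A - C_{T-t}$. The base case $t=0$ is exactly Lemma \ref{lemma: val_fn_T}: the summation is empty and the leading quadratic reads $\frac{1}{2}(x^{\ai{}}_{T-1}-v_T)^T(I-C_T)(x^{\ai{}}_{T-1}-v_T)$. As a preliminary remark I would record that all the matrices $\{C_t\}_t$ are symmetric, commute with $A$, and commute with one another, since $C_T=(I+A)^{-1}$ shares the eigenbasis of $A$ and the recursion $C_t^{-1}=2I+A-C_{t+1}$ preserves that basis; this lets me manipulate the quadratic forms (and later invert $Q+I$) as if working eigenvalue by eigenvalue.

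For the inductive step I assume the formula for $V_{T-t}$ and compute $V_{T-t-1}(x_{T-t-2},v_{T-t-1})$ from \eqref{eqn:value_fn_defn}. I substitute the inductive hypothesis for $V_{T-t}(x,v_{T-t})$, where the first argument is the as-yet-undetermined decision $x$ at round $T-t-1$, and take the conditional expectation given $\mathcal{F}_{T-t-1}$. The summation term collapses by the tower property, turning each $\E[\,\cdot\mid\mathcal{F}_{T-t}\,]$ into $\E[\,\cdot\mid\mathcal{F}_{T-t-1}\,]$. For the leading quadratic I expand $x - v_{T-t} = (x - v_{T-t-1}) - (v_{T-t}-v_{T-t-1})$ with $M := I - C_{T-t}$ symmetric; the cross term is linear in $v_{T-t}-v_{T-t-1}$, so its conditional expectation vanishes because $\E[v_{T-t}-v_{T-t-1}\mid\mathcal{F}_{T-t-1}]=0$. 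What survives is $(x-v_{T-t-1})^T M (x-v_{T-t-1})$ together with $\E[(v_{T-t}-v_{T-t-1})^T M (v_{T-t}-v_{T-t-1})\mid\mathcal{F}_{T-t-1}]$, and this latter term is precisely the new $s=t$ summand that extends $\sum_{s=0}^{t-1}$ to $\sum_{s=0}^{t}$.

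It then remains to minimize over $x$ the $x$-dependent part, $g(x) = \frac{1}{2}(x-v_{T-t-1})^T A (x-v_{T-t-1}) + \frac{1}{2}\|x - x_{T-t-2}\|_2^2 + \frac{1}{2}(x-v_{T-t-1})^T M (x-v_{T-t-1})$. Setting the gradient to zero and using $A + 2I - C_{T-t} = C_{T-t-1}^{-1}$ yields $x = C_{T-t-1}x_{T-t-2} + (I - C_{T-t-1})v_{T-t-1}$, exactly the \ai{} action at round $T-t-1$, which simultaneously confirms optimality of the stated policy. Plugging this back in, the minimized sum of the two quadratics centered at $v_{T-t-1}$ (with matrix $Q := A + M = A + I - C_{T-t}$) and at $x_{T-t-2}$ (with matrix $I$) equals $\frac{1}{2}(x_{T-t-2}-v_{T-t-1})^T Q(Q+I)^{-1}(x_{T-t-2}-v_{T-t-1})$; since $Q+I = C_{T-t-1}^{-1}$ we get $Q(Q+I)^{-1} = (C_{T-t-1}^{-1}-I)C_{T-t-1} = I - C_{T-t-1}$, giving the desired leading quadratic $\frac{1}{2}(x_{T-t-2}-v_{T-t-1})^T(I-C_{T-t-1})(x_{T-t-2}-v_{T-t-1})$. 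Combining with the summation term closes the induction.

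The main obstacle I anticipate is purely the algebraic "completion of the square": verifying that the minimized two-quadratic objective collapses exactly to the matrix $I - C_{T-t-1}$, which is where the recursion $C_{T-t-1}^{-1} = 2I + A - C_{T-t}$ does all the work and where the commutativity remark earns its keep. The conceptually essential (though technically light) step is the vanishing of the cross term: it is the only place the martingale assumption enters, and it is what keeps $V$ in its clean quadratic-plus-variance form, since otherwise a linear-in-$x$ drift term would appear and break the structure.
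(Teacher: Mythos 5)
Your proposal is correct and follows essentially the same route as the paper's proof: backward induction anchored at Lemma \ref{lemma: val_fn_T}, the martingale property killing the cross term so that the increment's quadratic becomes the new $s=t$ summand, and the recursion $C_{T-t-1}^{-1} = 2I + A - C_{T-t}$ identifying the optimal action and collapsing the completed square to $I - C_{T-t-1}$. The only cosmetic difference is that you package the plug-back step as the general two-quadratic inf-convolution identity $Q(Q+I)^{-1}$, whereas the paper substitutes the optimal action and verifies $C_{T-t-1}(C_{T-t-1}^{-1}-I)C_{T-t-1} + (I-C_{T-t-1})^2 = I - C_{T-t-1}$ term by term — the same computation, organized differently.
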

\begin{proof}

The proof follows via an induction argument. The value function at round $T$, according to Lemma \ref{lemma: val_fn_T}, is
\begin{align}
    V_{T}(x^{\ai{}}_{T-1}, v_{T}) = \frac{1}{2} (x^{\ai{}}_{T-1} - v_T)^T \left(  I - C_T \right) (x^{\ai{}}_{T-1} - v_T).
\end{align}
The proposition is, therefore, true for $t=0$ (sum from $0$ to $-1$ is zero). Assuming the proposition for some $t\geq 0$, we have the optimal action at round $T-t-1$ as
\begin{align}
\begin{split}
x^{\ai{}}_{T-t-1} = \argmin_x & \bigg\{\frac{1}{2} (x - v_{T-t-1})^T A (x - v_{T-t-1}) + \frac{1}{2}\|x - x^{\ai{}}_{T-t-2}\|_2^2\\
&+ \E[V_{T-t}(x,v_{T-t}) | \mathcal{F}_{T-t-1}]\bigg\}
\end{split}\\
\begin{split}
=\argmin_x & \bigg\{\frac{1}{2} (x - v_{T-t-1})^T A (x - v_{T-t-1}) + \frac{1}{2}\|x - x^{\ai{}}_{T-t-2}\|_2^2\\
&+ \frac{1}{2} \E\left[(x-v_{T-t})^T (I - C_{T-t}) (x-v_{T-t}) | \mathcal{F}_{T-t-1}\right]\\
&+ \sum_{s=0}^{t-1} \frac{1}{2}\E\left[(v_{T-s} - v_{T-s-1})^T (I-C_{T-s}) (v_{T-s} - v_{T-s-1}) | \mathcal{F}_{T-t-1} \right]\bigg\}
\end{split}\\
\begin{split}
= \argmin_x & \bigg\{\frac{1}{2} (x - v_{T-t-1})^T (A + I -C_{T-t}) (x - v_{T-t-1}) + \frac{1}{2}\|x - x^{\ai{}}_{T-t-2}\|_2^2\\
&+ (x - v_{T-t-1})^T (I -C_{T-t}) \E[v_{T-t-1} - v_{T-t} | \mathcal{F}_{T-t-1}]\\
&+ \frac{1}{2}\E[(v_{T-t} - v_{T-t-1})^T (I-C_{T-t})(v_{T-t} - v_{T-t-1}) | \mathcal{F}_{T-t-1}]\\
&+ \sum_{s=0}^{t-1} \frac{1}{2} \E\left[ (v_{T-s} - v_{T-s-1})^T (I-C_{T-s}) (v_{T-s} - v_{T-s-1}) | \mathcal{F}_{T-t-1} \right] \bigg\}
\end{split}\\
\begin{split}
=\argmin_x & \bigg\{\frac{1}{2} (x - v_{T-t-1})^T (A + I -C_{T-t}) (x - v_{T-t-1}) + \frac{1}{2}\|x - x^{\ai{}}_{T-t-2}\|_2^2\\
&+ \sum_{s=0}^{t} \frac{1}{2} \E\left[ (v_{T-s} - v_{T-s-1})^T (I-C_{T-s}) (v_{T-s} - v_{T-s-1}) | \mathcal{F}_{T-t-1} \right]\bigg\}
\end{split}
\end{align}
Differentiating the objective of the optimization problem above, we get
\begin{align}
    (A + I - C_{T-t})(x - v_{T-t-1}) + (x - x^{\ai{}}_{T-t-2}) = 0
\end{align}
giving us the optimal action at round $T-t-1$ as
\begin{align}
    x^{\ai{}}_{T-t-1} &= (2I + A - C_{T-t})^{-1} x^{\ai{}}_{T-t-2} + \left(I - (2I + A - C_{T-t})^{-1} \right)v_{T-t-1}\\
    &= C_{T-t-1} x^{\ai{}}_{T-t-2} + \left(I - C_{T-t-1} \right) v_{T-t-1}
\end{align}
where the invertibility of $(2I + A - C_{T-t})$ is easy to prove and is provided in the appendix. Consequently, the value function at round $T-t-1$ as
\begin{align}
\begin{split}
V_{T-t-1}(x^{\ai{}}_{T-t-2}&, v_{T-t-1})\\
=& \frac{1}{2} (x^{\ai{}}_{T-t-1} - v_{T-t-1})^T A (x^{\ai{}}_{T-t-1} - v_{T-t-1}) + \frac{1}{2}\|x^{\ai{}}_{T-t-1} - x^{\ai{}}_{T-t-2}\|_2^2\\
&+ \E[V_{T-t}(x^{\ai{}}_{T-t-1}, v_{T-t}) | \mathcal{F}_{T-t-1}]
\end{split}\\
\begin{split}
=& \frac{1}{2} (x^{\ai{}}_{T-t-2} - v_{T-t-1})^T C_{T-t-1}(C_{T-t-1}^{-1} - I) C_{T-t-1}(x^{\ai{}}_{T-t-2} - v_{T-t-1})\\
&+ \frac{1}{2} (x^{\ai{}}_{T-t-2} - v_{T-t-1})^T(I-C_{T-t-1})^2 (x^{\ai{}}_{T-t-2} - v_{T-t-1})\\
&+ \sum_{s=0}^{t} \frac{1}{2} \E\left[ (v_{T-s} - v_{T-s-1})^T (I-C_{T-s}) (v_{T-s} - v_{T-s-1}) | \mathcal{F}_{T-t-1} \right]
\end{split}\\
\begin{split}
=& \frac{1}{2} (x^{\ai{}}_{T-t-2} - v_{T-t-1})^T (I - C_{T-t-1})(x^{\ai{}}_{T-t-2} - v_{T-t-1})\\
&+ \sum_{s=0}^{t} \frac{1}{2} \E\left[ (v_{T-s} - v_{T-s-1})^T (I-C_{T-s}) (v_{T-s} - v_{T-s-1}) | \mathcal{F}_{T-t-1} \right]
\end{split}
\end{align}
proving the proposition through induction. The cost of \ai{} algorithm, which is also the online optimal cost, is
\begin{align}
    \E&\left[\text{Cost}_{\text{\ai{}}}[1,T]\right] = \E[V_{1}(x_0,v_1)|\mathcal{F}_0] = \sum_{t=1}^{T} \frac{1}{2} \E\left[ (v_t - v_{t-1})^T (I-C_{t}) (v_{t} - v_{t-1}) | \mathcal{F}_{0} \right]
\end{align}
where $\mathcal{F}_0$ is the trivial sigma field and $v_0 = x_0$. The upper bound in terms of $C_L$ is a consequence of the following important observation.
\begin{lemma}
The matrix sequence in \ai{} algorithm, that is, $\{C_t\}_t$ and $C_L$ are related as
\begin{align*}
    C_L \prec C_1 \prec C_2 \prec \ldots \prec C_{T-1} \prec C_T.
\end{align*}
\end{lemma}
The proof follows from the recursion satisfied by $\{C_t\}_t$, that is, $C_{t-1}^{-1} = 2I + A - C_t$ and the fact that $C_L$ is the stationary point of this recursion. We prove this lemma, along with other interesting properties of $\{C_t\}_{t=1}^T$, in Corollary \ref{corr:Ct_add_props}.
\end{proof}

\subsection{Proof of Theorem \ref{thm:FI_lin_regret_short}}\label{proof:FI_lin_regret_short}
We split the proof into four parts, one of which will present the exact characterization of the lower bound. The first step is the cost of the \FI{} algorithm, as the stated in the proposition below.
\begin{proposition}\label{prop:FI_cost_gen}
The following is the expected cost of the \FI{} algorithm, with matrix $C$ for martingale minimizers \eqref{eqn:minimizer_martingale} with increments having same covariance matrix, $\Sigma$,
\begin{equation*}
\E[\text{Cost}_{\FI{}}[1,T]] \geq \sum_{t=1}^{T} \frac{1}{2}\E\left[(v_{t}-v_{t-1})^T\left(I-C^2\right)^{-1}(AC^2 + \left(I-C\right)^2) (v_{t}-v_{t-1}) \right] - \left(\frac{\left(\lambda_{\max}^{C} \right)^{2}}{1 - \left(\lambda_{\max}^{C} \right)^{2}} \right) \frac{\sigma^2}{2}
\end{equation*}
\end{proposition}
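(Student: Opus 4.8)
The plan is to pass to the \emph{error process} $y_t := x_t - v_t$ and reduce the entire cost to a quadratic form in the martingale increments $w_t := v_t - v_{t-1}$, then use the orthogonality of these increments to evaluate the expectation and sum a geometric series. Substituting the interpolation rule $x_t = Cx_{t-1} + (I-C)v_t$ gives $y_t = C(x_{t-1}-v_t) = C(y_{t-1}-w_t)$ with $y_0 = x_0 - v_0 = 0$, and $x_t - x_{t-1} = (I-C)(w_t - y_{t-1})$. Since $C$ shares its eigenvectors with $A$, we have $CAC = AC^2$, so the round-$t$ hitting and switching costs combine into the single quadratic form $\tfrac12 (y_{t-1}-w_t)^T M (y_{t-1}-w_t)$, where $M := AC^2 + (I-C)^2 \succ 0$. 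Hence $\text{Cost}_{\FI{}}[1,T] = \tfrac12\sum_{t=1}^T (y_{t-1}-w_t)^T M (y_{t-1}-w_t)$, and the whole problem is now about the second moments of the $y_{t-1}$'s.

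Next I would unroll the recursion to $y_{t-1} = -\sum_{s=1}^{t-1} C^{t-s} w_s$ and take expectations. The martingale hypothesis $\E[w_t \mid \mathcal{F}_{t-1}] = 0$ is used twice: it kills the cross term $\E[y_{t-1}^T M w_t]$ (because $y_{t-1}$ is $\mathcal{F}_{t-1}$-measurable), and it kills all off-diagonal products $\E[w_s^T(\cdot)w_r]$ with $s\neq r$ by the same tower/measurability argument. With the common-covariance assumption $\E[w_s w_s^T] = \Sigma$ and $\mathrm{tr}(\Sigma)=\sigma^2$, only the diagonal survives, giving $\E[y_{t-1}^T M y_{t-1}] = \sum_{k=1}^{t-1}\mathrm{tr}(M C^{2k}\Sigma)$ and $\E[w_t^T M w_t] = \mathrm{tr}(M\Sigma)$.

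The third step is the geometric summation. Using that $M$, $C$ and $(I-C^2)^{-1}$ are simultaneously diagonalizable (all in the eigenbasis of $A$) together with $\sum_{k=0}^{t-1} C^{2k} = (I-C^2)^{-1}(I - C^{2t})$, the round-$t$ expected cost equals $\tfrac12\mathrm{tr}\!\big(M(I-C^2)^{-1}(I-C^{2t})\Sigma\big)$. Summing over $t$ separates this into the advertised linear main term $\sum_{t=1}^T \tfrac12 \E[(v_t-v_{t-1})^T (I-C^2)^{-1}(AC^2+(I-C)^2)(v_t-v_{t-1})]$ and a subtracted nonnegative correction $\tfrac12\sum_{t=1}^T \mathrm{tr}\!\big(M(I-C^2)^{-1}C^{2t}\Sigma\big)$; note the finite-horizon factors $C^{2T}$ only make this correction smaller, so discarding them is harmless for a lower bound.

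Finally I would bound the correction. The natural tool is $\sum_{t\ge 1} C^{2t} \preceq \frac{(\lambda_{\max}^C)^2}{1-(\lambda_{\max}^C)^2} I$, which follows since each eigenvalue satisfies $\sum_{t\ge1} c_i^{2t} = \frac{c_i^2}{1-c_i^2} \le \frac{(\lambda_{\max}^C)^2}{1-(\lambda_{\max}^C)^2}$ because $x\mapsto x/(1-x)$ is increasing on $[0,1)$; combined with control of the residual factor $M(I-C^2)^{-1}$ this yields the stated constant $\frac{(\lambda_{\max}^C)^2}{1-(\lambda_{\max}^C)^2}\cdot\frac{\sigma^2}{2}$. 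I expect this last step to be the main obstacle: one must show that the matrix factor multiplying the geometric term contributes at most $\sigma^2=\mathrm{tr}(\Sigma)$ in trace, i.e. an inequality of the shape $\mathrm{tr}(M(I-C^2)^{-1}C^{2t}\Sigma)\le (\lambda_{\max}^C)^{2t}\,\mathrm{tr}(\Sigma)$, and this must be argued for a \emph{general} covariance $\Sigma$ that need not commute with $A$. Because simultaneous diagonalization is then unavailable, the estimate has to be carried out through the PSD ordering and trace inequalities rather than eigenvalue-by-eigenvalue, which is the delicate part of the argument.
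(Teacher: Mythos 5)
Your reduction to the error process and the forward unrolling are correct, and they are essentially the paper's own computation run in the opposite direction: the paper proves, by backward induction on the cost-to-go, exactly the identity you reach by unrolling, namely
\begin{equation*}
\E[\text{Cost}_{\FI{}}[1,T]] \;=\; \sum_{t=1}^{T}\frac{1}{2}\,\mathrm{tr}\!\left(M\,(I-C^2)^{-1}\bigl(I-C^{2t}\bigr)\Sigma\right),
\qquad M := AC^2+(I-C)^2,
\end{equation*}
built from the same three ingredients you use: commutativity of $A$, $C$, $I-C$ in their common eigenbasis, the martingale property to kill every cross term, and a geometric sum. Up to this point there is no substantive difference between your argument and the paper's, only the order of the bookkeeping.

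The genuine gap is the final step that you yourself flag, and it is worse than ``delicate'': it cannot be closed, because the inequality you need is false in the stated generality. To hold against every admissible $\Sigma$, your bound $\mathrm{tr}\bigl(M(I-C^2)^{-1}C^{2t}\Sigma\bigr)\le(\lambda_{\max}^{C})^{2t}\,\mathrm{tr}(\Sigma)$ forces $\lambda_{\max}\bigl(M(I-C^2)^{-1}C^{2t}\bigr)\le(\lambda_{\max}^{C})^{2t}$. In the eigenbasis of $A$ the eigenvalues of $M(I-C^2)^{-1}$ are $w(\lambda_i^A,\lambda_i^C)$ with $w(\alpha,c)=\frac{\alpha c^{2}+(1-c)^{2}}{1-c^{2}}$, and $w(\alpha,c)\le 1$ if and only if $c\le\frac{2}{\alpha+2}$; the \FI{} class only assumes $C\prec I$, so this can fail badly. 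Concretely, take $d=1$, $A=1$, $C=0.9$, i.i.d.\ increments of variance $\sigma^{2}$: then $w\approx 4.3$, the exact cost equals the main term minus $\frac{w\sigma^{2}}{2}\sum_{t=1}^{T}(0.81)^{t}$, and already at $T=2$ this correction exceeds the claimed $\frac{(\lambda_{\max}^{C})^{2}}{1-(\lambda_{\max}^{C})^{2}}\frac{\sigma^{2}}{2}$ --- so the proposition itself is violated, not just your route to it. You should know that the paper's proof stumbles at exactly the same spot: after assembling the total cost, it silently replaces the subtracted term $\frac{1}{2}\E\bigl[(v_{T-s+1}-v_{T-s})^{T}(I-C^2)^{-1}M\,C^{2s}(v_{T-s+1}-v_{T-s})\bigr]$ by $\frac{1}{2}\E\bigl[(v_{T-s+1}-v_{T-s})^{T}C^{2s}(v_{T-s+1}-v_{T-s})\bigr]$, which is precisely the unproved (and generally false) claim $(I-C^2)^{-1}M\preceq I$. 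The statement becomes true if one adds the hypothesis $C\preceq 2(A+2I)^{-1}$ (equivalently $w(\lambda_i^A,\lambda_i^C)\le1$ for all $i$), which is satisfied by $C_{\robd{}}$ and by $C_L$, so Theorem \ref{thm:R-OBD_lin_regret} is unaffected; alternatively one can inflate the correction constant by $\max_i w(\lambda_i^A,\lambda_i^C)$, which keeps it $O(1)$ in $T$ so the $\Omega(T)$ regret conclusions survive. But as a blind proof of the proposition as literally stated, your plan cannot be completed, because the missing step is exactly where the statement fails.
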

The proof of this involves an induction argument to get the cost of the last $t$ rounds and is presented right after the theorem proof in Appendix \ref{proof:FI_cost_gen} to focus on the regret lower bound. The next step is to compare it to the cost of the \ai{} algorithm to get the following lower bound to the regret.

\begin{theorem}\label{thm:FI_lin_regret}
Consider the hitting costs, switching costs and stochastic setting \eqref{eqn:minimizer_martingale} from the problem set-up with $(v_t - v_{t-1})$ having covariance matrix $\Sigma$. The following is the regret of the \FI{} algorithm,
\begin{align*}
\begin{split}
\E[Regret_{\FI{}}[1,T]]
\geq \frac{T}{2} \sum_{j \not \in Z} \left(w\left(\lambda_j^A, \lambda_j^{C}\right) - \left(1-\lambda_j^L\right)\right)(e_j^T P^T \Sigma P e_j) - \left(\frac{\left(\lambda_{\max}^{C} \right)^{2}}{1 - \left(\lambda_{\max}^{C} \right)^{2}} \right) \frac{\sigma^2}{2}.
\end{split}
\end{align*}
where $w(\alpha,c) = \frac{\alpha c^2 + (1-c)^2}{1-c^2}$, $Z = \{ i \in \{1,\ldots,d\}: \lambda_i^C = \lambda_{i}^L \}$, $P$ is the modal matrix of $A$ and, $\lambda_j^A, \lambda_j^C, \lambda_j^L$ are the $j^{th}$ eigenvalues of $A, C, C_L$ respectively.
\end{theorem}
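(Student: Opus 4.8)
The plan is to read the regret directly off the two cost formulas already in hand. Since $\text{Regret}_{\FI}[1,T] = \E[\text{Cost}_{\FI}[1,T]] - \E[\text{Cost}_{\ai}[1,T]]$, a lower bound on the regret follows by lower bounding the first term via Proposition~\ref{prop:FI_cost_gen} and upper bounding the second. For the second term I would not use the exact online-optimal cost but the $C_L$-based upper bound stated in the second line of Theorem~\ref{thm:LAI_total_cost}, namely $\E[\text{Cost}_{\ai}[1,T]] \le \sum_t \tfrac12\E[(v_t-v_{t-1})^T(I-C_L)(v_t-v_{t-1})]$. This uses the ordering $C_L \prec C_1 \prec \cdots \prec C_T$ (so that $I - C_t \prec I - C_L$ for every $t$), which replaces the round-dependent matrices $\{C_t\}$ by the single matrix $C_L$ and produces the clean factor $T$. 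The point to be careful about is that the two inequalities must point the same way: pairing the \FI{} lower bound with the \ai{} upper bound keeps their difference a genuine lower bound on the regret.

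Next I would exploit simultaneous diagonalizability. By definition $C$ shares the eigenvectors of $A$, and $C_L = \tfrac{A + 2I - \sqrt{A^2+4A}}{2}$ is a function of $A$, so $A$, $C$ and $C_L$ are all diagonalized by the modal matrix $P$. The martingale property gives $\E[v_t - v_{t-1}\mid\mathcal{F}_{t-1}] = 0$, hence $\E[(v_t-v_{t-1})(v_t-v_{t-1})^T]=\Sigma$, so each quadratic form collapses to a trace: for any $M$ in this common eigenbasis, $\E[(v_t-v_{t-1})^T M (v_t-v_{t-1})] = \mathrm{tr}(M\Sigma) = \sum_j \mu_j(M)\,(e_j^T P^T \Sigma P e_j)$ with $\mu_j(M)$ the $j$-th eigenvalue. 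Applied to the \FI{} cost matrix $(I-C^2)^{-1}(AC^2+(I-C)^2)$ this gives per-coordinate eigenvalue $w(\lambda_j^A,\lambda_j^C)$, and applied to $I-C_L$ it gives $1-\lambda_j^L$. Subtracting term by term yields the bound $\tfrac{T}{2}\sum_{j}\big(w(\lambda_j^A,\lambda_j^C)-(1-\lambda_j^L)\big)(e_j^T P^T \Sigma P e_j)$ with the sum over all $j$, plus the additive correction $-\big(\tfrac{(\lambda_{\max}^C)^2}{1-(\lambda_{\max}^C)^2}\big)\tfrac{\sigma^2}{2}$ carried over verbatim from Proposition~\ref{prop:FI_cost_gen}.

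The final step is to reduce the full sum to $j \notin Z$. The only content is the algebraic identity $w(\lambda^A,\lambda^L) = 1-\lambda^L$, which I would verify from the stationarity of $C_L$: the fixed-point relation $C_L^{-1} = 2I + A - C_L$ gives, per eigenvalue, $\lambda^A\lambda^L = (1-\lambda^L)^2$; substituting this into $w(\lambda^A,\lambda^L) = \tfrac{\lambda^A(\lambda^L)^2 + (1-\lambda^L)^2}{1-(\lambda^L)^2}$ and cancelling a factor of $(1-\lambda^L)$ from numerator and denominator collapses it to $1-\lambda^L$. Thus for every $j \in Z$ (where $\lambda_j^C = \lambda_j^L$) the coefficient $w(\lambda_j^A,\lambda_j^C)-(1-\lambda_j^L)$ vanishes and the corresponding summand drops, leaving exactly the stated sum over $j \notin Z$.

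I expect the work here to be mostly bookkeeping rather than conceptual, so the genuine subtleties are two. The first is ensuring the directions of the two bounds compose correctly, which is handled by invoking the monotonicity $C_L \prec C_t$ to obtain the $C_L$-based upper bound on the \ai{} cost. The second, which is the crux, is the collapse $w(\lambda^A,\lambda^L)=1-\lambda^L$ via the $C_L$ stationarity equation: this is what identifies $C_L$ as the per-coordinate optimal interpolation coefficient and makes the coefficients off $Z$ the ``non-vanishing gap'' that drives the linear regret (and, separately, certifies positivity of $w(\lambda_j^A,\lambda_j^C)-(1-\lambda_j^L)$ for $j \notin Z$, which is what Theorem~\ref{thm:FI_lin_regret_short} then turns into the $\Omega(T)$ conclusion once the additive term is confirmed to be $T$-independent).
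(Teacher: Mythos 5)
Your proposal is correct and takes essentially the same route as the paper's proof: lower-bound the \FI{} cost by Proposition~\ref{prop:FI_cost_gen}, upper-bound the \ai{} cost by the $C_L$-form of Theorem~\ref{thm:LAI_total_cost}, diagonalize $A$, $C$, and $C_L$ in the modal matrix $P$ so that the per-coordinate coefficients become $w(\lambda_j^A,\lambda_j^C)$ and $1-\lambda_j^L$, and discard the $j\in Z$ terms because they vanish. The only difference is cosmetic---you verify the vanishing identity $w(\lambda^A,\lambda^L)=1-\lambda^L$ directly from the fixed-point relation $\lambda^A\lambda^L=(1-\lambda^L)^2$, whereas the paper cites Lemma~\ref{lemma:g function defn} (that $c_L$ minimizes $w(\alpha,\cdot)$ with minimum value $1-c_L$)---though note that your identity alone does not, as your closing parenthetical suggests, certify strict positivity of $w(\lambda_j^A,\lambda_j^C)-(1-\lambda_j^L)$ for $j\notin Z$; that strict-minimizer property is exactly what Lemma~\ref{lemma:g function defn} supplies, and it is needed for Theorem~\ref{thm:FI_lin_regret_short}, not for the bound proved here.
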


\begin{proof}
We can write $A = PD_A P^T$, $C_L = P D_L P^T$ and $C = P D P^T$, where $P$ is the modal matrix of A and the middle matrix in each being the diagonal matrix of respective eigenvalues. 
\begin{align}
\begin{split}
\E[&\text{Cost}_{\FI{}}[1,T]] - \E[\text{Cost}_{\text{\ai{}}}[1,T]]\\
\geq & \sum_{t=1}^{T} \frac{1}{2}\E\left[(v_{t}-v_{t-1})^T\left(I-C^2\right)^{-1}(AC^2 + \left(I-C\right)^2) (v_{t}-v_{t-1}) \right]\\
&- \left(\frac{\left(\lambda_{\max}^{C} \right)^{2}}{1 - \left(\lambda_{\max}^{C} \right)^{2}} \right) \frac{\sigma^2}{2} - \sum_{t=1}^T \E[(v_t - v_{t-1})^T(1-C_L)(v_t - v_{t-1})]
\end{split}\\
\begin{split}
\geq & \sum_{t=1}^{T} \frac{1}{2}\E\left[(v_{t}-v_{t-1})^T P \left( \left(I-D^2\right)^{-1}(D_A D^2 + \left(I-D\right)^2) - (I - D_L)\right) P^T (v_{t}-v_{t-1}) \right]\\
&- \left(\frac{\left(\lambda_{\max}^{C} \right)^{2}}{1 - \left(\lambda_{\max}^{C} \right)^{2}} \right) \frac{\sigma^2}{2}
\end{split}
\end{align}
Observe that we can write
\begin{align}
     \left(I-D^2\right)^{-1}&(D_A D^2 + \left(I-D\right)^2) - (I - D_L)\\ &= diag\left(w\left(\lambda_1^A, \lambda_1^{C}\right) - \left(1-\lambda_1^L\right), \ldots, w\left(\lambda_d^A, \lambda_d^{C}\right) - \left(1-\lambda_d^L\right)\right)
\end{align}
We define the set of indices $Z = \{ i \in \{1,\ldots,d\}: \lambda_i^C = \lambda_{i}^L \}$. 
The following lemma shows that $\left(w\left(\lambda_j^A, \lambda_j^{C}\right) - \left(1-\lambda_j^L\right)\right) > 0 \text{ } \forall \text{ } j \not \in Z.$ 
\begin{lemma}\label{lemma:g function defn}
The minimum of $w(\alpha,c)$ has the following properties
\begin{gather*}
    \argmin_{c \in [0,1]} w(\alpha,c) = \frac{\alpha +2 - \sqrt{\alpha^2 + 4\alpha}}{2} = \frac{2}{\alpha +2 + \sqrt{\alpha^2 + 4\alpha}} = c_L\ \text{ and }\ 
    \min_{c \in [0,1]} w(\alpha,c) = 1- c_L.
\end{gather*}
\end{lemma}
Now, using Lemma \ref{lemma:g function defn},
$
    w\left(\lambda_i^A, \lambda_i^{C}\right) - \left(1-\lambda_i^L\right) > 0 \text{ if } i \not \in Z \text{ and } w\left(\lambda_i^A, \lambda_i^{C}\right) - \left(1-\lambda_i^L\right) = 0 \text { if } i \in Z.
$
There has to be $i$ such that $i \not \in Z$. Otherwise, $C$ and $C_L$ will have all eigenvalues and their corresponding eigenvectors same, making them equal.
Denoting $P^T(v_t - v_{t-1}) = y_t$, the gap of \FI{} to \ai{} is
\begin{align}
\begin{split}
Regret_{\FI{}}[1,T]\geq & \sum_{t=1}^{T} \frac{1}{2}\E\left[y_t^T diag\left(w\left(\lambda_1^A, \lambda_1^{C}\right) - \left(1-\lambda_1^L\right), \ldots, w\left(\lambda_d^A, \lambda_d^{C}\right) - \left(1-\lambda_d^L\right)\right) y_t \right]\\
&- \left(\frac{\left(\lambda_{\max}^{C} \right)^{2}}{1 - \left(\lambda_{\max}^{C} \right)^{2}} \right) \frac{\sigma^2}{2}
\end{split}\\
\begin{split}
= & \sum_{t=1}^{T} \frac{1}{2} \sum_{j \not \in Z} \left(w\left(\lambda_j^A, \lambda_j^{C}\right) - \left(1-\lambda_j^L\right)\right)\E(y_t)_j^2 - \left(\frac{\left(\lambda_{\max}^{C} \right)^{2}}{1 - \left(\lambda_{\max}^{C} \right)^{2}} \right) \frac{\sigma^2}{2}
\end{split}\\
\begin{split}
= & \frac{T}{2} \sum_{j \not \in Z} \left(w\left(\lambda_j^A, \lambda_j^{C}\right) - \left(1-\lambda_j^L\right)\right)\E(y)_j^2 - \left(\frac{\left(\lambda_{\max}^{C} \right)^{2}}{1 - \left(\lambda_{\max}^{C} \right)^{2}} \right) \frac{\sigma^2}{2}
\end{split}
\end{align}
where 
\begin{align}
    \E(y_t)_j^2 &= \E[(e_j^T y_t)^2]\\
    &= \E[e_j^T y_t y_t^T e_j]\\
    &= \E[e_j^T P^T(v_t - v_{t-1}) (v_t - v_{t-1})^T P e_j]\\
    &= e_j^T P^T \Sigma P e_j.
\end{align}
Therefore,
\begin{align}
\begin{split}
\E[\text{Cost}_{\FI{}}[1,T]] - \E[&\text{Cost}_{\text{\ai{}}}[1,T]]\\
\geq & \frac{T}{2} \sum_{j \not \in Z} \left(w\left(\lambda_j^A, \lambda_j^{C}\right) - \left(1-\lambda_j^L\right)\right)\left(e_j^T P^T \Sigma P e_j \right) - \left(\frac{\left(\lambda_{\max}^{C} \right)^{2}}{1 - \left(\lambda_{\max}^{C} \right)^{2}} \right) \frac{\sigma^2}{2}.
\end{split}
\end{align}
\end{proof}
\begin{proof}[Proof of Theorem \ref{thm:FI_lin_regret_short}]
Lastly, we show $(e_j^T P^T \Sigma P e_j) > 0$ for both cases of $\Sigma$. Let's consider the second case where $A$ and $\Sigma$ have same eigenvectors. We have $\Sigma = P D_\Sigma P^T$ giving us
\begin{align}
    \E(y)_j^2 &= e_j^T P^T \Sigma P e_j
    = D_\Sigma(j,j)
\end{align}
and
\begin{align}
\begin{split}
Regret_{\FI{}}[1,T] \geq \frac{T}{2} \sum_{j \not \in Z} \left(w\left(\lambda_j^A, \lambda_j^{C}\right) - \left(1-\lambda_j^L\right)\right)D_\Sigma(j,j) - \left(\frac{\left(\lambda_{\max}^{C} \right)^{2}}{1 - \left(\lambda_{\max}^{C} \right)^{2}} \right) \frac{\sigma^2}{2}.
\end{split}
\end{align}
Now, $(D-D_L)D_\Sigma = 0_{d \times d} \implies (C_{\robd{}} - C_L)\Sigma = 0_{d \times d}$. Therefore, the condition in the second case, that is, $(C- C_L)\Sigma \neq 0_{d \times d}$ implies, $\bar{D} = (D-D_L)D_\Sigma \neq 0_{d \times d}$, meaning, $\bar{D}(j,j) = 0$ if $j \in Z$ and $\bar{D}(j,j) = \left(\lambda_j^{C} - \lambda_j^L\right)D_\Sigma(j,j)$ if $j \not \in Z$. For $\bar{D} \neq 0$, there has to be some $j \not \in Z$ such that $D_\Sigma(j,j) > 0$ proving that \FI{} has $\Omega(T)$ regret.

For the case where $A$ and $\Sigma$ different eigenvectors, Lemma \ref{lemma:non-zero var} below shows that $\E(y)_j^2 > 0 \text{  } \forall \text{ } j$, giving $\Omega(T)$ regret.
\begin{lemma}\label{lemma:non-zero var}
Consider $A = P D_A P^T$, where $D_A$ is a diagonal matrix and a random vector $u_t \in \R^d$ with covariance matrix $\Sigma$. Suppose $\Sigma$ has modal matrix $Q \neq P$. Then $P^T u_t $ satisfies
\begin{align*}
    \E\left[e_j^T \left(P^T u_t u_t^T P\right) e_j \right] = e_j^T P^T \Sigma P e_j > 0 \text{ } \forall \text{ } j,t
\end{align*}
\end{lemma}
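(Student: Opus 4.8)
The plan is to reduce the claim to the positivity of a quadratic form of the covariance matrix $\Sigma$ evaluated at a column of $P$, separating the (trivial) expectation identity from the (genuine) strict inequality. First I would dispose of the identity: writing $u_t$ for the zero-mean increment $v_t - v_{t-1}$, linearity of expectation gives $\E[e_j^T P^T u_t u_t^T P e_j] = e_j^T P^T \E[u_t u_t^T] P e_j$, and since $u_t$ has mean zero its raw second-moment matrix coincides with its covariance, $\E[u_t u_t^T] = \Sigma$. This establishes $\E[e_j^T (P^T u_t u_t^T P) e_j] = e_j^T P^T \Sigma P e_j$ at once, so all the content sits in showing this quantity is strictly positive.

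Next I would rewrite the target as a quadratic form. Let $p_j := P e_j$ denote the $j$-th column of $P$; because $A = P D_A P^T$ is a spectral decomposition, $P$ is orthogonal and each $p_j$ is a unit eigenvector of $A$, so in particular $p_j \neq 0$. Then $e_j^T P^T \Sigma P e_j = p_j^T \Sigma p_j$. Since $\Sigma$ is a covariance matrix it is positive semidefinite, whence $p_j^T \Sigma p_j \geq 0$ for free, and the entire difficulty is ruling out equality. For a PSD matrix one has $p_j^T \Sigma p_j = 0$ if and only if $\Sigma p_j = 0$ (writing $\Sigma = R^T R$, the form equals $\norm{R p_j}^2$); thus the lemma is equivalent to the statement that no eigenvector of $A$ lies in $\ker \Sigma$.

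The main obstacle is exactly this strict positivity. The hypothesis $Q \neq P$ alone does not suffice: a rank-deficient $\Sigma$ whose kernel is spanned by a single column $p_j$ of $P$, with its remaining eigenvectors misaligned from those of $A$, would satisfy $Q \neq P$ yet give $p_j^T \Sigma p_j = 0$. The clean way I would close the argument is to invoke non-degeneracy of the covariance, $\Sigma \succ 0$: congruence by the orthogonal $P$ preserves positive definiteness, so $P^T \Sigma P \succ 0$ and every diagonal entry $p_j^T \Sigma p_j$ is strictly positive, for all $j$ simultaneously, which is exactly the stated conclusion. I would therefore make $\Sigma \succ 0$ explicit as the operative assumption, and note that the role of $Q \neq P$ in the surrounding proof of Theorem \ref{thm:FI_lin_regret} is only to guarantee that the index set $\{j : j \notin Z\}$ is nonempty; the per-coordinate strict positivity required there is supplied by the quadratic-form argument above.
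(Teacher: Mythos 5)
Your mathematics is sound, but note what it actually establishes: you prove a repaired statement (with $\Sigma \succ 0$), having correctly diagnosed that the lemma as written is false --- and this diagnosis also invalidates the paper's own proof. The paper expands in the eigenbasis of $\Sigma$, obtaining $e_j^T P^T \Sigma P e_j = \sum_{k=1}^d D_\Sigma(k,k)\,(Q^T P e_j)_k^2$, and then asserts that since $P$ and $Q$ are different eigenbases, $P e_j$ cannot be orthogonal to any column of $Q$, so that \emph{every} component $(Q^T P e_j)_k$ is nonzero; strict positivity then follows from the existence of at least one positive $D_\Sigma(k,k)$. That assertion is a non sequitur, and your rank-deficient construction refutes it concretely: take $d = 3$, $P = I$, and let $\Sigma$ have eigenvalue $0$ on $e_1$ and distinct positive eigenvalues on $(e_2 + e_3)/\sqrt{2}$ and $(e_2 - e_3)/\sqrt{2}$. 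Then no modal matrix of $\Sigma$ coincides with $P$, yet $P e_1 = e_1$ is orthogonal to the other two columns of $Q$, and $e_1^T P^T \Sigma P e_1 = 0$. So for $d \geq 3$ the hypothesis $Q \neq P$ is genuinely insufficient, exactly as you say. Your route --- reduce to the quadratic form $p_j^T \Sigma p_j$ and observe that for a positive semidefinite $\Sigma$ it vanishes if and only if $p_j \in \ker \Sigma$ --- is the cleaner one, because it isolates the minimal correct hypothesis: no eigenvector of $A$ lies in $\ker \Sigma$.

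Two remarks on your repair. First, assuming $\Sigma \succ 0$ is sufficient and makes the proof one line, but it is stronger than necessary; the kernel condition above is the exact dividing line, and adopting it instead would let Theorem \ref{thm:FI_lin_regret} still cover degenerate covariances. Second, your closing remark misattributes one piece of bookkeeping: in the paper's proof of Theorem \ref{thm:FI_lin_regret}, nonemptiness of $\{j : j \notin Z\}$ is deduced from $C \neq C_L$, not from $Q \neq P$; the hypothesis $Q \neq P$ merely selects which of the theorem's two cases applies. Neither point affects the validity of what you proved.
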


\textit{Proof.} We can write $\Sigma  = Q D_\Sigma Q^T$ where Q is the matrix having eigenvectors of $\Sigma$ as columns and $D_\Sigma$ is the diagonal matrix of eigenvalues of $\Sigma$. We will have $Q \neq P$ and
\begin{align}
    e_j^T P^T \Sigma P e_j &= e_j^T P^T Q D_\Sigma Q^T P e_j\\
    &= \sum_{k=1}^d D_\Sigma(k,k) (Q^T P e_j)_k^2
\end{align}
Now, notice that $Q^T P e_j$ is the decomposition of the $j^{th}$ eigenvector in $P$ into the components along the eigenbasis defined by Q. Since, $P$ and $Q$ are different eigenbases, $P e_j$ cannot be orthogonal to any eigenvectors in Q, that is any row of $Q^T$. Therefore each element of $Q^T P e_j$ is non-zero, giving us
\begin{align}
    (Q^T P e_j)_k^2 > 0 \text{ } \forall \text{ } k.
\end{align}
There has to be at least one positive diagonal element in $D_\Sigma$, otherwise $\Sigma = 0$ (as $\Sigma \succcurlyeq 0)$. This proves
\begin{align}
    e_j^T P^T \Sigma P e_j > 0 \text{ } \forall \text{ } j
\end{align}
\end{proof}

\subsection{Proof of Proposition \ref{prop:FI_cost_gen}}\label{proof:FI_cost_gen}

We calculate the cost of the following iterative algorithm, 
\begin{align}
    x_t = C x_{t-1} + (I-C) v_t
\end{align}
where $C \prec I$. The following claim holds.

\textbf{Claim:} The cost from round $T-t$ to $T$ is
\begin{align}
\begin{split}
\E[&\text{Cost}_{\FI{}}[T-t,T]] \\
=& \frac{1}{2} \E\left[ (x_{T-t-1} - v_{T-t})^T( AC^2 + (I-C)^2)\left( \sum_{i=0}^t C^{2i} \right)(x_{T-t-1} - v_{T-t}) \right]\\
&+ \sum_{s=1}^{t} \frac{1}{2}\E\left[(v_{T-s+1}-v_{T-s})^T(I-C^2)^{-1}(AC^2 + (I-C)^2)(I - C^{2s}) (v_{T-s+1}-v_{T-s}) \right]
\end{split}
\end{align}

We prove this claim by induction. First, we look at $t=0$ or cost at round $T$.
\begin{align}
\begin{split}
\E[\text{Cost}_{\FI{}}[T,T]] =& \frac{1}{2} \E\left[ (x_{T-1} - v_{T})^T C A C (x_{T-1} - v_{T}) \right]\\
&+ \frac{1}{2} \E\left[ (x_{T-1} - v_{T})^T (I-C) (I-C)(x_{T-1} - v_{T}) \right]
\end{split}\\
&= \frac{1}{2} \E\left[ (x_{T-1} - v_{T})^T( AC^2 + (I-C)^2) (x_{T-1} - v_{T}) \right]
\end{align}
Now, $A, C , (I-C)$ are all real symmetric matrices with the \textbf{same modal matrix $P$}. This means their \textbf{product is commutative}, which is used in the second inequality here.

Therefore, the claim holds for $t=0$ ($C^0 = I$ and sum from $1$ to $0$ is zero). Further, we take $0^0 = 1$. Assuming the claim holds for some $t \geq 0$, we have 
\begin{align}
\begin{split}
\E[&\text{Cost}_{\FI{}}[T-t,T]]\\
=& \frac{1}{2} \E\left[ (x_{T-t-1} - v_{T-t})^T( AC^2 + (I-C)^2)\left( \sum_{i=0}^t C^{2i} \right)(x_{T-t-1} - v_{T-t}) \right]\\
&+ \sum_{s=1}^{t} \frac{1}{2}\E\left[(v_{T-s+1}-v_{T-s})^T(I-C^2)^{-1}(AC^2 + (I-C)^2)(I - C^{2s}) (v_{T-s+1}-v_{T-s}) \right]
\end{split}\\
\begin{split}
=& \frac{1}{2} \E\left[ (x_{T-t-1} - v_{T-t-1})^T( AC^2 + (I-C)^2)\left( \sum_{i=0}^t C^{2i} \right)(x_{T-t-1} - v_{T-t-1}) \right]\\
&+ \E \left[ (x_{T-t-1} - v_{T-t-1})^T( AC^2 + (I-C)^2)\left( \sum_{i=0}^t C^{2i} \right) \E[v_{T-t} - v_{T-t-1} | \mathcal{F}_{T-t-1}]  \right]\\
&+ \frac{1}{2} \E\left[ (v_{T-t} - v_{T-t-1})^T( AC^2 + (I-C)^2)\left( \sum_{i=0}^t C^{2i} \right)(v_{T-t} - v_{T-t-1}) \right]\\
&+ \sum_{s=1}^{t} \frac{1}{2}\E\left[(v_{T-s+1}-v_{T-s})^T(I-C^2)^{-1}(AC^2 + (I-C)^2)(I - C^{2s}) (v_{T-s+1}-v_{T-s}) \right]
\end{split}\\
\begin{split}
=& \frac{1}{2} \E\left[ (x_{T-t-1} - v_{T-t-1})^T( AC^2 + (I-C)^2)\left( \sum_{i=0}^t C^{2i} \right)(x_{T-t-1} - v_{T-t-1}) \right]\\
&+ \frac{1}{2} \E\left[ (v_{T-t} - v_{T-t-1})^T( AC^2 + (I-C)^2)(I-C^2)^{-1}\left(I - C^{2(t+1)} \right)(v_{T-t} - v_{T-t-1}) \right]\\
&+ \sum_{s=1}^{t} \frac{1}{2}\E\left[(v_{T-s+1}-v_{T-s})^T(I-C^2)^{-1}(AC^2 + (I-C)^2)(I - C^{2s}) (v_{T-s+1}-v_{T-s}) \right]
\end{split}\\
\begin{split}
=& \frac{1}{2} \E\left[ (x_{T-t-1} - v_{T-t-1})^T( AC^2 + (I-C)^2)\left( \sum_{i=0}^t C^{2i} \right)(x_{T-t-1} - v_{T-t-1}) \right]\\
&+ \sum_{s=1}^{t+1} \frac{1}{2}\E\left[(v_{T-s+1}-v_{T-s})^T(I-C^2)^{-1}(AC^2 + (I-C)^2)(I - C^{2s}) (v_{T-s+1}-v_{T-s}) \right]
\end{split}
\end{align}
The cost from round $T-t-1$ to $T$, therefore, is
\begin{align}
\begin{split}
\E[&\text{Cost}_{\FI{}}[T-t-1,T]]\\
=& \frac{1}{2} \E\left[ (x_{T-t-1} - v_{T-t-1})^T A (x_{T-t-1} - v_{T-t-1})\right] + \frac{1}{2} \E\left[ \|x_{T-t-1} - x_{T-t-2}\|_2^2 \right]\\
&+ \E[C_{\FI{}}[T-t,T]]
\end{split}\\
\begin{split}
=& \frac{1}{2} \E\left[ (x_{T-t-1} - v_{T-t-1})^T \left(( AC^2 + (I-C)^2)\left( \sum_{i=0}^t C^{2i} \right) + A \right) (x_{T-t-1} - v_{T-t-1})\right]\\
&+ \frac{1}{2} \E\left[ \|x_{T-t-1} - x_{T-t-2}\|_2^2 \right]\\
&+ \sum_{s=1}^{t+1} \frac{1}{2}\E\left[(v_{T-s+1}-v_{T-s})^T(I-C^2)^{-1}(AC^2 + (I-C)^2)(I - C^{2s}) (v_{T-s+1}-v_{T-s}) \right]
\end{split}\\
\begin{split}
=& \frac{1}{2} \E\left[ (x_{T-t-2} - v_{T-t-1})^T C\left(( AC^2 + (I-C)^2)\left( \sum_{i=0}^t C^{2i} \right) + A \right)C (x_{T-t-2} - v_{T-t-1})\right]\\
&+ \frac{1}{2} \E\left[ (x_{T-t-2} - v_{T-t-1})^T (I-C)^2 (x_{T-t-2} - v_{T-t-1})\right]\\
&+ \sum_{s=1}^{t+1} \frac{1}{2}\E\left[(v_{T-s+1}-v_{T-s})^T(I-C^2)^{-1}(AC^2 + (I-C)^2)(I - C^{2s}) (v_{T-s+1}-v_{T-s}) \right]
\end{split}\\
\begin{split}
=& \frac{1}{2} \E\Bigg[ (x_{T-t-2} - v_{T-t-1})^T \left(( AC^2 + (I-C)^2)\left( \sum_{i=1}^{t+1} C^{2i} \right) + AC^2 + (I-C)^2 \right) \ldots\\
&(x_{T-t-2} - v_{T-t-1}) \Bigg]\\
&+ \sum_{s=1}^{t+1} \frac{1}{2}\E\left[(v_{T-s+1}-v_{T-s})^T(I-C^2)^{-1}(AC^2 + (I-C)^2)(I - C^{2s}) (v_{T-s+1}-v_{T-s}) \right]
\end{split}\\
\begin{split}
=& \frac{1}{2} \E\left[ (x_{T-t-2} - v_{T-t-1})^T ( AC^2 + (I-C)^2)\left( \sum_{i=0}^{t+1} C^{2i} \right) (x_{T-t-2} - v_{T-t-1}) \right]\\
&+ \sum_{s=1}^{t+1} \frac{1}{2}\E\left[(v_{T-s+1}-v_{T-s})^T(I-C^2)^{-1}(AC^2 + (I-C)^2)(I - C^{2s}) (v_{T-s+1}-v_{T-s}) \right]
\end{split}
\end{align}
Now, to complete the proof, the total cost is
\begin{align}
\begin{split}
\E[&\text{Cost}_{\FI{}}[1,T]]\\
=& \frac{1}{2} \E\left[ (x_{0} - v_{1})^T(I-C^2)^{-1}(AC^2 + (I-C)^2)(I - C^{2T})(x_{0} - v_{1}) \right]\\
&+ \sum_{s=1}^{T-1} \frac{1}{2}\E\left[(v_{T-s+1}-v_{T-s})^T(I-C^2)^{-1}(AC^2 + (I-C)^2)(I - C^{2s}) (v_{T-s+1}-v_{T-s}) \right]
\end{split}\\
\begin{split}
=& \sum_{s=1}^{T} \frac{1}{2}\E\left[(v_{T-s+1}-v_{T-s})^T(I-C^2)^{-1}(AC^2 + (I-C)^2) (v_{T-s+1}-v_{T-s}) \right]\\
&-\sum_{s=1}^{T} \frac{1}{2}\E\left[(v_{T-s+1}-v_{T-s})^T C^{2s} (v_{T-s+1}-v_{T-s}) \right]
\end{split}\\
\begin{split}
\geq & \sum_{s=1}^{T} \frac{1}{2}\E\left[(v_{T-s+1}-v_{T-s})^T(I-C^2)^{-1}(AC^2 + (I-C)^2) (v_{T-s+1}-v_{T-s}) \right]\\
&-\sum_{s=1}^{T} \frac{1}{2}\E\left[(v_{T-s+1}-v_{T-s})^T \left(\lambda_{\max}^C \right)^{2s} I (v_{T-s+1}-v_{T-s}) \right]
\end{split}\\
\begin{split}
\geq & \sum_{s=1}^{T} \frac{1}{2}\E\left[(v_{T-s+1}-v_{T-s})^T(I-C^2)^{-1}(AC^2 + (I-C)^2) (v_{T-s+1}-v_{T-s}) \right]\\
&- \left(\frac{\left(\lambda_{\max}^C \right)^{2}}{1 - \left(\lambda_{\max}^C \right)^{2}} \right) \frac{\sigma^2}{2}
\end{split}\\
\begin{split}
\geq & \sum_{t=1}^{T} \frac{1}{2}\E\left[(v_{t}-v_{t-1})^T\left(I-C^2\right)^{-1}(AC^2 + \left(I-C\right)^2) (v_{t}-v_{t-1}) \right]\\
&- \left(\frac{\left(\lambda_{\max}^{C} \right)^{2}}{1 - \left(\lambda_{\max}^{C} \right)^{2}} \right) \frac{\sigma^2}{2}
\end{split}\label{eqn:FI_cost_ineq}
\end{align}
\[
\pushQED{\qed} 
\qedhere
\popQED
\]  

\subsection{Proof of Theorem \ref{thm:NOFI_AI_perf} (i)}\label{proof:NOFI_AI_perf_stoch}
The analysis of the upper bound on the regret of \LAItxt{} involves two key steps. The first is an upper bound on the cost of \LAItxt{}, as stated in the following proposition.
\begin{proposition}\label{prop:nofi_dp_cost}
The total cost of \LAItxt{} for martingale minimizers \eqref{eqn:minimizer_martingale} is upper bounded as
\begin{align}
\begin{split}
\E[\text{Cost}_{\LAImath{}}[1,T]] &\leq \frac{1}{2}\sum_{t=1}^{T}  \E\left[(v_{t} - v_{t-1})^T (I-\Tilde{C}_{t}) (v_{t} - v_{t-1})\right].
\end{split}
\end{align}
\end{proposition}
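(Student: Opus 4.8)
The plan is to mirror the dynamic-programming induction of Proposition \ref{prop:AI_val_fn}, but now tracking the cost-to-go \emph{incurred by following \LAItxt{}} (rather than the optimal value function) and carrying an upper bound throughout. Write $J_{T-t}(x_{T-t-1},v_{T-t})$ for the expected cost of \LAItxt{} over rounds $T-t,\dots,T$ given the state $(x_{T-t-1},v_{T-t})$. I would prove by backward induction on $t$ the bound
\begin{equation*}
J_{T-t}(x_{T-t-1},v_{T-t}) \leq \tfrac12 (x_{T-t-1}-v_{T-t})^T(I-\Tilde{C}_{T-t})(x_{T-t-1}-v_{T-t}) + \sum_{s=0}^{t-1}\tfrac12\E\!\left[(v_{T-s}-v_{T-s-1})^T(I-\Tilde{C}_{T-s})(v_{T-s}-v_{T-s-1})\,\middle|\,\F_{T-t}\right],
\end{equation*}
after which the proposition follows by setting $t=T-1$ and $v_0=x_0$.

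For the base case ($t=0$), I would substitute the \LAItxt{} action $x_T=\Tilde{C}_T x_{T-1}+(I-\Tilde{C}_T)v_T$ into the round-$T$ cost, which yields exactly $\tfrac12(x_{T-1}-v_T)^T\big(\Tilde{C}_T A\Tilde{C}_T+(I-\Tilde{C}_T)^2\big)(x_{T-1}-v_T)$. The only genuine inequality in the whole argument enters here: I would show $\Tilde{C}_T A\Tilde{C}_T+(I-\Tilde{C}_T)^2\preceq I-\Tilde{C}_T$. Since $\Tilde{C}_T=P\Tilde{D}_TP^T$ shares the eigenbasis $P$ of $A$, this reduces to the scalar inequality $c^2\lambda+(1-c)^2\leq 1-c$ for each eigenvalue pair, i.e.\ $c(1+\lambda)\le 1$, which holds because $\Tilde{\lambda}_i^T\le(1+\lambda_i^A)^{-1}$ — a consequence of the nonnegative term $\tfrac{\lambda_i^A}{2}\big((1+4/\lambda_i^A)^{\gamma/2}-1\big)\ge 0$ for $\gamma\ge 0$ appearing in the denominator of $\Tilde{\lambda}_i^T$.

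For the inductive step I would invoke the induction hypothesis to upper bound $\E[J_{T-t}(x^{\LAImath}_{T-t-1},v_{T-t})\mid\F_{T-t-1}]$ by the quadratic-plus-increments expression, then expand $(x-v_{T-t})$ around $v_{T-t-1}$; the martingale property \eqref{eqn:minimizer_martingale} annihilates the cross term and produces the new increment summand $\tfrac12\E[(v_{T-t}-v_{T-t-1})^T(I-\Tilde{C}_{T-t})(v_{T-t}-v_{T-t-1})\mid\F_{T-t-1}]$. The decisive structural fact is that $\Tilde{C}_t$ obeys the \emph{same} recursion $\Tilde{C}_{t}^{-1}=2I+A-\Tilde{C}_{t+1}$ as $\{C_t\}$ in \ai{}; consequently $x^{\LAImath}_{T-t-1}$ is precisely the minimizer of the resulting surrogate objective $\tfrac12(x-v_{T-t-1})^T(A+I-\Tilde{C}_{T-t})(x-v_{T-t-1})+\tfrac12\|x-x_{T-t-2}\|_2^2$, and plugging it in collapses the head term to $\tfrac12(x_{T-t-2}-v_{T-t-1})^T(I-\Tilde{C}_{T-t-1})(x_{T-t-2}-v_{T-t-1})$ by the identical algebra as in Proposition \ref{prop:AI_val_fn}. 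This preserves the inequality direction and advances the induction.

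The main obstacle — and the only place the bound is not tight — is the base-case spectral inequality $\Tilde{C}_TA\Tilde{C}_T+(I-\Tilde{C}_T)^2\preceq I-\Tilde{C}_T$; everything downstream is the exact \ai{} computation with $C_t$ replaced by $\Tilde{C}_t$, so no further slack accumulates and the single unit of slack introduced at round $T$ simply propagates. The subtle point to get right is that evaluating the cost at the \emph{fixed} \LAItxt{} action (without re-optimizing) still yields the clean closed form; this is guaranteed only because the matching recursion makes that fixed action coincide with the surrogate minimizer.
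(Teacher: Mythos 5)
Your proposal is correct and follows essentially the same route as the paper's proof: a backward induction on the \LAItxt{} cost-to-go, with the single inequality entering at the base case via $\Tilde{C}_T A \Tilde{C}_T + (I-\Tilde{C}_T)^2 \preceq I - \Tilde{C}_T$ (equivalent to the paper's observation that $\Tilde{C}_T(A+I)(\Tilde{C}_T - (A+I)^{-1}) \preceq 0$, since $\Tilde{\lambda}_i^T \leq (1+\lambda_i^A)^{-1}$), followed by an exact collapse in the inductive step using the martingale property and the recursion $\Tilde{C}_t^{-1} = 2I + A - \Tilde{C}_{t+1}$. Your framing of the collapse via the \LAItxt{} action being the minimizer of the surrogate objective is a nice way to see why the algebra works, but it is the same computation the paper carries out directly.
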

The proof of this proposition uses an induction argument by cleverly employing the recursion followed by $\{\Tilde{C}_t\}_t$ and is proved right after the theorem proof in Appendix \ref{proof:nofi_dp_cost}. The second step uses the following key property exhibited by eigenvalues of $\{\Tilde{C}_t\}_t$ of \LAItxt{} and $\{C_t\}_t$ of \ai{}.
\begin{lemma}\label{lemma:nofi_dp_eigenvalues}
The $i^{th}$ eigenvalue of \LAItxt{}'s $\Tilde{C}_t$, that is $\Tilde{\lambda}_i^t$, and the $i^{th}$ eigenvalue of \ai{}'s $C_t$, that is, $\lambda_i^t$, are related in the following manner
\begin{align*}
\left(\lambda_i^t - \Tilde{\lambda}_i^t \right) &\leq \frac{\lambda_i^A}{2}\left( \left(1 + \frac{4}{\lambda_i^A}\right)^{\frac{\gamma}{2}} - 1 \right) \left(\lambda_i^T \right)^{2(T-t+1)}
\end{align*}
\end{lemma}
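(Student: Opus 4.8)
The plan is to pass to the common eigenbasis of $A$ and reduce the matrix statement to $d$ independent scalar claims. Both recursions $C_t^{-1} = 2I + A - C_{t+1}$ and $\tilde{C}_t^{-1} = 2I + A - \tilde{C}_{t+1}$ preserve the eigenvectors of $A$, since their terminal matrices $C_T = (I+A)^{-1}$ and $\tilde{C}_T = P\tilde{D}_T P^T$ are simultaneously diagonalizable with $A$; hence it suffices to fix an index $i$, write $\lambda = \lambda_i^A$, and analyze the two scalar sequences generated by the same map $f(x) = (2 + \lambda - x)^{-1}$, namely $\lambda_i^t = f(\lambda_i^{t+1})$ and $\tilde\lambda_i^t = f(\tilde\lambda_i^{t+1})$, with terminal values $\lambda_i^T = (1+\lambda)^{-1}$ and $\tilde\lambda_i^T = (1+\lambda+\beta)^{-1}$, where $\beta := \frac{\lambda}{2}\big((1+\tfrac{4}{\lambda})^{\gamma/2}-1\big) \geq 0$ is exactly the prefactor appearing in the target bound.

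The engine of the proof is a \emph{multiplicative} recursion for the difference $\delta_t := \lambda_i^t - \tilde\lambda_i^t$. Using the identities $2+\lambda-\lambda_i^{t+1} = 1/\lambda_i^t$ and $2+\lambda-\tilde\lambda_i^{t+1} = 1/\tilde\lambda_i^t$, I would subtract the two fractions to obtain
\[
\delta_t = \frac{\lambda_i^{t+1} - \tilde\lambda_i^{t+1}}{(2+\lambda-\lambda_i^{t+1})(2+\lambda-\tilde\lambda_i^{t+1})} = \delta_{t+1}\,\lambda_i^t\,\tilde\lambda_i^t.
\]
Unrolling this from $T$ down to $t$ gives $\delta_t = \delta_T \prod_{s=t}^{T-1}\lambda_i^s\,\tilde\lambda_i^s$ (a product of $T-t$ factors). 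For the terminal gap, $\delta_T = \frac{\beta}{(1+\lambda)(1+\lambda+\beta)} \leq \frac{\beta}{(1+\lambda)^2} = \beta\,(\lambda_i^T)^2$, using $\beta\geq 0$. It then remains only to bound each product factor by $(\lambda_i^T)^2$, after which $\delta_t \leq \beta\,(\lambda_i^T)^2 (\lambda_i^T)^{2(T-t)} = \beta\,(\lambda_i^T)^{2(T-t+1)}$, which is precisely the claim.

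The step I expect to be the main obstacle is establishing $\lambda_i^s\,\tilde\lambda_i^s \leq (\lambda_i^T)^2$ for all $s \in \{t,\dots,T-1\}$, which amounts to the two monotonicity facts $\lambda_i^s \leq \lambda_i^T$ and $\tilde\lambda_i^s \leq \lambda_i^T$. The first is already available from the ordering $C_L \prec C_1 \prec \cdots \prec C_T$ (Corollary \ref{corr:Ct_add_props}). For the second I would invoke the dynamics of $f$: it is increasing with a smaller, \emph{attracting} fixed point $\lambda_i^L$ (the eigenvalue of $C_L$) and a larger, repelling fixed point, and it maps the invariant interval $[\lambda_i^L,\cdot)$ into itself with $f(x)\leq x$ there; hence any sequence started in $[\lambda_i^L,\lambda_i^T]$ decreases monotonically and stays $\leq \lambda_i^T$. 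The remaining point is to verify the initial value satisfies $\lambda_i^L \leq \tilde\lambda_i^T \leq \lambda_i^T$: the right inequality is immediate since $\beta\geq 0$, and the left inequality, after clearing denominators using the form $\lambda_i^L = \frac{2}{2+\lambda+\sqrt{\lambda^2+4\lambda}}$ from Lemma \ref{lemma:g function defn}, reduces to $(1+\tfrac{4}{\lambda})^{\gamma/2} \leq (1+\tfrac{4}{\lambda})^{1/2}$, which holds because $\gamma\leq 1$ and $1+\tfrac{4}{\lambda} > 1$. (Note the endpoint cases are reassuring: $\gamma=0$ gives $\tilde\lambda_i^T = \lambda_i^T$ and $\delta_t=0$, while $\gamma=1$ gives $\tilde\lambda_i^T = \lambda_i^L$, the fixed point, so $\tilde\lambda_i^s \equiv \lambda_i^L$.) Combining the two monotonicity bounds with the unrolled product and the terminal estimate completes the argument for each eigenvalue, and hence for the matrices.
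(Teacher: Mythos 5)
Your proposal is correct and takes essentially the same route as the paper's proof: both pass to the common eigenbasis, subtract the two scalar recursions to obtain the multiplicative identity $\lambda_i^t - \tilde\lambda_i^t = (\lambda_i^{t+1} - \tilde\lambda_i^{t+1})\,\lambda_i^t\tilde\lambda_i^t$, bound each factor by $(\lambda_i^T)^2$ via the monotonicity $\lambda_i^s \le \lambda_i^T$ and $\tilde\lambda_i^s \le \tilde\lambda_i^T \le \lambda_i^T$, and estimate the terminal gap as $\lambda_i^T - \tilde\lambda_i^T = \beta\,\lambda_i^T\tilde\lambda_i^T \le \beta(\lambda_i^T)^2$. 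The only difference is presentational: you unroll the product before bounding and re-derive the monotonicity of $\{\tilde\lambda_i^t\}$ from the fixed-point dynamics of the map $x \mapsto (2+\lambda-x)^{-1}$, whereas the paper bounds step by step and cites Corollary \ref{corr:Ct_add_props} for the same ordering facts.
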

The proof of this lemma uses the recursion followed by both $\{\Tilde{C}_t\}_t$ and $\{\Tilde{C}_t\}_t$ and is presented in Appendix \ref{proof:nofi_dp_eigenvalues}. We now proceed to upper bound the regret of \LAItxt{} algorithm.

\begin{proof}[Proof of Theorem~\ref{thm:NOFI_AI_perf}]
We write each $C_t$ and $\Tilde{C}_t$ as $PD_t P^T$ and $P \Tilde{D}_t P^T$ where $D_t$ and $\Tilde{D}_t$ are the eigenvalue diagonal matrices of $C_t$ and $\Tilde{C}_t$ respectively. Therefore, regret of \LAItxt{} is \textbf{upper-bounded} as
\begin{align}
&\E[Cost_{\LAImath{}}[1,T]] - \E[\text{Cost}_{\text{\ai{}}}[1,T]] \leq \frac{1}{2}\sum_{t=1}^{T}  \E\left[(v_{t} - v_{t-1})^T (C_t-\Tilde{C}_{t}) (v_{t} - v_{t-1})\right]\\
&= \frac{1}{2}\sum_{t=1}^{T}  \E\left[(v_{t} - v_{t-1})^T P(D_t-\Tilde{D}_{t}) P^T(v_{t} - v_{t-1})\right]
\leq \frac{1}{2}\sum_{t=1}^{T} \sum_{i=1}^d\left(\lambda_i^t - \Tilde{\lambda}_i^t \right) \E(y_t)_i^2\\
&= \frac{1}{2}\sum_{i=1}^d \E(y)_i^2 \sum_{t=1}^{T} \left(\lambda_i^t - \Tilde{\lambda}_i^t \right) 
\leq  \frac{1}{2}\sum_{i=1}^d \E(y)_i^2 \sum_{t=1}^{T} \frac{\lambda_i^A}{2}\left( \left(1 + \frac{4}{\lambda_i^A}\right)^{\frac{\gamma}{2}} - 1 \right) \left(\lambda_i^T \right)^{2(T-t+1)} \\
&\leq \frac{1}{2}\sum_{i=1}^d \E(y)_i^2 \frac{\lambda_i^A}{2}\left( \left(1 + \frac{4}{\lambda_i^A}\right)^{\frac{\gamma}{2}} - 1 \right) \sum_{t=1}^{\infty}  \left(\lambda_i^T \right)^{2t} 
=  \frac{1}{2}\sum_{i=1}^d \E(y)_i^2 \frac{\lambda_i^A}{2}\left( \left(1 + \frac{4}{\lambda_i^A}\right)^{\frac{\gamma}{2}} - 1 \right) \frac{\left(\lambda_i^T \right)^2}{1-\left(\lambda_i^T \right)^2}\\
&=  \frac{1}{2}\sum_{i=1}^d \E(y)_i^2 \frac{\left( \left(1 + \frac{4}{\lambda_i^A}\right)^{\frac{\gamma}{2}} - 1 \right)}{2\left(\lambda_i^A + 2 \right)}\\
\end{align}
where $y_t = P^T(v_t - v_{t-1})$. Note that $\E(y_t)_i^2 = \E(y)_i^2$ is time-invariant as all $(v_t - v_{t-1})$ have same covariance $\Sigma$. The function $\left( \left(1 + \frac{4}{q}\right)^{\frac{\gamma}{2}} - 1 \right)/(2\left(q + 2 \right))$ is always decreasing in $q$ for $\gamma \in [0,1]$. Therefore, 
\begin{align}
\text{Regret}_{\LAImath{}}[1,T] \leq \frac{1}{4} \frac{\left( \left(1 + \frac{4}{\lambda_{\min}^A}\right)^{\frac{\gamma}{2}} - 1 \right)}{\left(\lambda_{\min}^A + 2 \right)} \sum_{i=1}^d \E(y)_i^2  = \frac{\left( \left(1 + \frac{4}{\lambda_{\min}^A}\right)^{\frac{\gamma}{2}} - 1 \right)}{\left(\lambda_{\min}^A + 2 \right)} \frac{\sigma^2}{4} 
\end{align}
where the last step uses $\sum_{i=1}^d \E(y)_i^2 = \sigma^2 = tr(\Sigma)$ which is stated and proved as Lemma \ref{lemma:var_split_and_combine}.
\end{proof}

\subsection{Proof of Proposition \ref{prop:nofi_dp_cost}}\label{proof:nofi_dp_cost}
\textbf{Claim:} The cost from round $T-t$ to $T$ for martingale minimizers is upper bound by
\begin{align}
\begin{split}
\E[\text{Cost}_{\LAImath{}}[T-t,T]] \leq& \frac{1}{2}\E\left[(x_{T-t-1} - v_{T-t})^T (I-\Tilde{C}_{T-t})(x_{T-t-1} - v_{T-t})\right]\\
&+ \frac{1}{2}\sum_{s=0}^{t-1}  \E\left[(v_{T-s} - v_{T-s-1})^T (I-\Tilde{C}_{T-s}) (v_{T-s} - v_{T-s-1})\right]
\end{split}
\end{align}
For $t=0$, it is the expected cost of the final round, which is
\begin{align}
    \E\left[\text{Cost}_{\LAImath{}}[T,T]\right] =& \frac{1}{2} \E\left[(x_{T-1} - v_T)^T \left(A(\Tilde{C}_T)^2 + (I-\Tilde{C}_T)^2 \right) (x_{T-1} - v_T)\right]\\
    \begin{split}
    =& \frac{1}{2} \E\left[(x_{T-1} - v_T)^T (I - \Tilde{C}_T) (x_{T-1} - v_T)\right]\\
    &+ \frac{1}{2} \E\left[(x_{T-1} - v_T)^T \left(A(\Tilde{C}_T)^2 + (I-\Tilde{C}_T)^2 - (I - \Tilde{C}_T) \right)  (x_{T-1} - v_T)\right]
    \end{split}\\
    \begin{split}
    =& \frac{1}{2} \E\left[(x_{T-1} - v_T)^T (I - \Tilde{C}_T) (x_{T-1} - v_T)\right]\\
    &+ \frac{1}{2} \E\left[(x_{T-1} - v_T)^T \left(A(\Tilde{C}_T)^2 - \Tilde{C}_T(I-\Tilde{C}_T) \right)  (x_{T-1} - v_T)\right]
    \end{split}\\
    \begin{split}
    =& \frac{1}{2} \E\left[(x_{T-1} - v_T)^T (I - \Tilde{C}_T) (x_{T-1} - v_T)\right]\\
    &+ \frac{1}{2} \E\left[(x_{T-1} - v_T)^T \left( \Tilde{C}_T(A+I) (\Tilde{C}_T - (A+I)^{-1}) \right)  (x_{T-1} - v_T)\right]
    \end{split}
\end{align}
Now, observe two things. First that $\Tilde{\lambda}_i^T < \frac{1}{\lambda_i^A + 1}$ and second that $\Tilde{C}_T$ has the same set of eigenvectors as $A$. Therefore, $\Tilde{C}_T(A+I) (\Tilde{C}_T - (A+I)^{-1})$ will be negative definite as $\Tilde{C}_T \prec (A+I)^{-1}$, $\Tilde{C}_T \succ 0$ and $(A+I) \succ I$. Therefore,
\begin{align}
    \E[\text{Cost}_{\LAImath{}}[T,T]] \leq \frac{1}{2}\E\left[ (x_{T-1} - v_T)^T (I - \Tilde{C}_T) (x_{T-1} - v_T)\right]
\end{align}
Therefore, the claim holds for $t=0$ as the sum from $-1$ to $0$ is zero. Assuming the claim holds for some $t \geq 0$,
\begin{align}
\begin{split}
    \E[\text{Cost}_{\LAImath{}}[T-t,T]] \leq& \frac{1}{2}\E\left[(x_{T-t-1} - v_{T-t})^T (I-\Tilde{C}_{T-t})(x_{T-t-1} - v_{T-t})\right]\\
    &+ \frac{1}{2}\sum_{s=0}^{t-1}  \E\left[(v_{T-s} - v_{T-s-1})^T (1-\Tilde{C}_{T-s}) (v_{T-s} - v_{T-s-1})\right]
\end{split}\\
\begin{split}
    \leq& \frac{1}{2}\E\left[(x_{T-t-1} - v_{T-t-1})^T (I-\Tilde{C}_{T-t})(x_{T-t-1} - v_{T-t-1})\right]\\
    &+ \E\left[ (x_{T-t-1} - v_{T-t-1})^T (I-\Tilde{C}_{T-t}) (v_{T-t-1} - v_{T-t})\right]\\
    &+ \frac{1}{2}\E\left[ (v_{T-t-1} - v_{T-t})^T (I-\Tilde{C}_{T-t}) (v_{T-t-1} - v_{T-t})\right]\\
    &+ \frac{1}{2}\sum_{s=0}^{t-1} \E\left[(v_{T-s} - v_{T-s-1})^T (I-\Tilde{C}_{T-s}) (v_{T-s} - v_{T-s-1})\right]
\end{split}\\
\begin{split}
    \leq& \frac{1}{2}\E\left[(x_{T-t-1} - v_{T-t-1})^T (I-\Tilde{C}_{T-t})(x_{T-t-1} - v_{T-t-1})\right]\\
    &+ \E\left[ (x_{T-t-1} - v_{T-t-1})^T (I-\Tilde{C}_{T-t}) \E\left[v_{T-t-1} - v_{T-t} | \mathcal{F}_{T-t-1}\right]\right]\\
    &+ \frac{1}{2}\sum_{s=0}^{t} \E\left[(v_{T-s} - v_{T-s-1})^T (I-\Tilde{C}_{T-s}) (v_{T-s} - v_{T-s-1})\right]
\end{split}\\
\begin{split}
    \leq& \frac{1}{2}\E\left[(x_{T-t-1} - v_{T-t-1})^T (I-\Tilde{C}_{T-t})(x_{T-t-1} - v_{T-t-1})\right]\\
    &+ \frac{1}{2}\sum_{s=0}^{t} \E\left[(v_{T-s} - v_{T-s-1})^T (I-\Tilde{C}_{T-s}) (v_{T-s} - v_{T-s-1})\right]
\end{split}
\end{align}
Therefore, the cost from $T-t-1$ to $T$ is
\begin{align}
\begin{split}
\E[Cos&t_{\LAImath{}}[T-t-1,T]]\\
\leq& \frac{1}{2}\E\left[(x_{T-t-1} - v_{T-t-1})^T A(x_{T-t-1} - v_{T-t-1})\right]\\
&+ \frac{1}{2} \E\|x_{T-t-1} - x_{T-t-2}\|_2^2 +  \E[\Tilde{C}_{\LAImath{}}[T-t,T]]
\end{split}\\
\begin{split}
\leq& \frac{1}{2}\E\left[(x_{T-t-1} - v_{T-t-1})^T (A+I-\Tilde{C}_{T-t})(x_{T-t-1} - v_{T-t-1})\right]\\
&+ \frac{1}{2} \E\left[(x_{T-t-1} - x_{T-t-2})^T (x_{T-t-1} - x_{T-t-2})\right]\\
&+ \frac{1}{2}\sum_{s=0}^{t} \E\left[(v_{T-s} - v_{T-s-1})^T (I-\Tilde{C}_{T-s}) (v_{T-s} - v_{T-s-1})\right] 
\end{split}\\
\begin{split}
\leq& \frac{1}{2}\E[(x_{T-t-2} - v_{T-t-1})^T \Tilde{C}_{T-t-1}(\Tilde{C}_{T-t-1}^{-1} - I)\Tilde{C}_{T-t-1}(x_{T-t-2} - v_{T-t-1})]\\
&+ \frac{1}{2} \E\left[(v_{T-t-1} - x_{T-t-2})^T (I - \Tilde{C}_{T-t-1})^2(v_{T-t-1} - x_{T-t-2})\right]\\
&+ \frac{1}{2}\sum_{s=0}^{t} \E\left[(v_{T-s} - v_{T-s-1})^T (I-\Tilde{C}_{T-s}) (v_{T-s} - v_{T-s-1})\right] 
\end{split}\\
\begin{split}
\leq& \frac{1}{2}\E[(x_{T-t-2} - v_{T-t-1})^T \left( \Tilde{C}_{T-t-1}(I - \Tilde{C}_{T-t-1}) + (I - \Tilde{C}_{T-t-1})^2 \right) (x_{T-t-2} - v_{T-t-1})]\\
&+ \frac{1}{2}\sum_{s=0}^{t} \E\left[(v_{T-s} - v_{T-s-1})^T (I-\Tilde{C}_{T-s}) (v_{T-s} - v_{T-s-1})\right] 
\end{split}\\
\begin{split}
\leq& \frac{1}{2}\E[(x_{T-t-2} - v_{T-t-1})^T (\Tilde{C}_{T-t-1} + I - \Tilde{C}_{T-t-1})(I - \Tilde{C}_{T-t-1}) (x_{T-t-2} - v_{T-t-1})]\\
&+ \frac{1}{2}\sum_{s=0}^{t} \E\left[(v_{T-s} - v_{T-s-1})^T (I-\Tilde{C}_{T-s}) (v_{T-s} - v_{T-s-1})\right] 
\end{split}\\
\begin{split}
\leq& \frac{1}{2}\E[(x_{T-t-2} - v_{T-t-1})^T (I - \Tilde{C}_{T-t-1}) (x_{T-t-2} - v_{T-t-1})]\\
&+ \frac{1}{2}\sum_{s=0}^{t} \E\left[(v_{T-s} - v_{T-s-1})^T (I-\Tilde{C}_{T-s}) (v_{T-s} - v_{T-s-1})\right] 
\end{split}
\end{align}
Now that our claim is proven, we can get the total cost of the algorithm by putting $t = T-1$
\begin{align}
\begin{split}
\E[\text{Cost}_{\LAImath{}}[1,T]] \leq& \frac{1}{2}\E\left[(x_{0} - v_{1})^T (I-\Tilde{C}_{1})(x_{0} - v_{1})\right]\\
&+ \frac{1}{2}\sum_{s=0}^{T-2}  \E\left[(v_{T-s} - v_{T-s-1})^T (I-\Tilde{C}_{T-s}) (v_{T-s} - v_{T-s-1})\right]
\end{split}\\
\begin{split}
=& \frac{1}{2}\E\left[(x_{0} - v_{1})^T (I-\Tilde{C}_{1})(x_{0} - v_{1})\right]\\
&+ \frac{1}{2}\sum_{t=2}^{T}  \E\left[(v_{t} - v_{t-1})^T (I-\Tilde{C}_{t}) (v_{t} - v_{t-1})\right]
\end{split}
\end{align}
\[
\pushQED{\qed} 
\qedhere
\popQED
\]    

\subsection{Proof of Theorem \ref{thm:adv_analysis_framework}}\label{proof:adv_analysis_framework}
The proof technique used here is inspired from \robd{}'s competitive analysis in \cite{GoelLinWierman19}. The key aspect of our proof is the potential function that depends on $g_t(\cdot)$ of the online algorithm \alg{} in question. To recap, the online algorithm \alg{} is
\begin{align}
    x_t = \argmin_x f_t(x) + D_{h}(x||x_{t-1}) + D_{g_t}(x||v_t)
\end{align}
where $f_t$ is $m$-strongly convex and $h(\cdot)$ is $\alpha$-strongly convex and $\beta$-smooth. Recall that $g_t(\cdot)$ is $\alpha'-$strongly convex and $\beta'-$smooth. This means
\begin{align}
    \nabla f_t(x_t) + (\nabla h(x_t) - \nabla h(x_{t-1})) +  (\nabla g_t(x_t) - \nabla g_t(v_t)) = 0.
\end{align}
We denote the hindsight optimal action sequence as $\left\{ x_t^*\right\}_t$. Using strong convexity of $f_t$
\begin{align}
f_t(x_t^*) \geq& f_t(x_t) + \langle \nabla f_t(x_t), x_t^* - x_t\rangle + \frac{m}{2}\norm{x_t^* - x_t}^2\\
=& f_t(x_t) - \langle \nabla h(x_{t-1}) - \nabla h(x_t), x_t - x_t^* \rangle\\
    &-  \langle \nabla g_t(v_t) - \nabla g_t(x_t), x_t - x_t^* \rangle + \frac{m}{2}\norm{x_t^* - x_t}^2.\label{eqn9}
\end{align}

\begin{lemma}[Generalized Triangle Inequality]\label{lemma:breg_div_3_pts}
For any three points $x,y,x \in \R^n$, the following holds for Bregman divergence $D_h(\cdot||\cdot)$ defined using a strongly convex function $h(\cdot)$
\begin{align*}
    \langle \nabla h(y) - \nabla h(z), z - x \rangle = D_h(x || y) - D_h(x || z) - D_h(z || y)
\end{align*}
\end{lemma}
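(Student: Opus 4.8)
The plan is to prove this identity by direct expansion, substituting the defining formula $D_h(a\|b) = h(a) - h(b) - \langle \nabla h(b), a-b\rangle$ into each of the three Bregman divergences on the right-hand side and verifying that the whole expression collapses onto the left-hand side. There is no need for convexity or any inequality here; the statement is a pure algebraic identity valid for any differentiable $h$, and strong convexity is only invoked to guarantee that $\nabla h$ exists and the divergences are well defined.

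First I would record the three expansions explicitly:
\begin{align*}
D_h(x\|y) &= h(x) - h(y) - \langle \nabla h(y), x-y\rangle,\\
D_h(x\|z) &= h(x) - h(z) - \langle \nabla h(z), x-z\rangle,\\
D_h(z\|y) &= h(z) - h(y) - \langle \nabla h(y), z-y\rangle.
\end{align*}
Forming the combination $D_h(x\|y) - D_h(x\|z) - D_h(z\|y)$, the next step is to observe that every scalar function-value term cancels: the two copies of $h(x)$ cancel, the $h(y)$ terms cancel against one another, and likewise the $h(z)$ terms. What remains is only the collection of inner-product terms $-\langle \nabla h(y), x-y\rangle + \langle \nabla h(z), x-z\rangle + \langle \nabla h(y), z-y\rangle$.

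I would then group the two terms carrying $\nabla h(y)$ and factor out the gradient, using $-(x-y)+(z-y) = z-x$, which yields $\langle \nabla h(y), z-x\rangle$. The surviving $\nabla h(z)$ term is $\langle \nabla h(z), x-z\rangle = -\langle \nabla h(z), z-x\rangle$. Adding these gives $\langle \nabla h(y) - \nabla h(z), z-x\rangle$, exactly the left-hand side, completing the proof.

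The only point demanding care is sign bookkeeping in the final grouping step, since the three divergences enter with mixed signs and the arguments $x-y$, $x-z$, $z-y$ are easy to transpose; but there is no genuine analytic obstacle. The identity is the Bregman analogue of the law of cosines, and its role downstream is simply to rewrite the cross terms $\langle \nabla h(x_{t-1}) - \nabla h(x_t), x_t - x_t^*\rangle$ appearing in \eqref{eqn9} as differences of divergences, which is what makes the telescoping potential-function argument in the proof of Theorem~\ref{thm:adv_analysis_framework} go through.
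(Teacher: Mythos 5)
Your proof is correct. The paper itself states this lemma without proof, treating it as a known three-point identity for Bregman divergences, and your direct expansion is precisely the standard verification: the function-value terms cancel in pairs, and the remaining inner-product terms regroup (via $-(x-y)+(z-y)=z-x$) into $\langle \nabla h(y)-\nabla h(z),\, z-x\rangle$. Your observation that the identity is purely algebraic and needs only differentiability of $h$ (strong convexity serving merely to ensure $\nabla h$ exists and the divergences are nonnegative elsewhere in the argument) is also accurate, as is your account of how the lemma is used to rewrite the cross terms in the potential-function proof of Theorem~\ref{thm:adv_analysis_framework}.
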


Using the above lemma for $D_h \left(\cdot||\cdot \right)$ and $D_{g_t} \left(\cdot||\cdot \right)$, we have
\begin{align}
    \langle \nabla h(x_{t-1}) - \nabla h(x_t), x_t - x_t^* \rangle = D_h(x_t^* || x_{t-1}) - D_h(x_t^* || x_t) - D_h(x_t || x_{t-1})
\end{align}
and
\begin{align}
\langle \nabla g_t(v_t) - \nabla g_t(x_t), x_t - x_t^* \rangle = D_{g_t}(x_t^* || v_t) - D_{g_t}(x_t^* || x_t) - D_{g_t}(x_t || v_t).
\end{align}
Substituting the two above identities into inequality \eqref{eqn9}, we get
\begin{align}
    &f_t(x_t) + D_h(x_t || x_{t-1}) + (D_h(x_t^* || x_t) + D_{g_t}(x_t^* || x_t)) + \frac{m}{2}\norm{x_t^* - x_t}^2 + D_{g_t}(x_t || v_t)\\
    &\leq f_t(x_t^*) + D_{g_t}(x_t^* || v_t) + D_h(x_t^* || x_{t-1})
\end{align}
It follows that 
\begin{align}\label{eqn1}
\begin{split}
&f_t(x_t) + D_h(x_t || x_{t-1}) + (D_h(x_t^* || x_t) + D_{g_t}(x_t^* || x_t)) + \frac{m}{2}\norm{x_t^* - x_t}^2\\
    &\leq f_t(x_t^*) + D_{g_t}(x_t^* || v_t) + D_h(x_t^* || x_{t-1})
\end{split}  
\end{align}
Now, define the potential function $\phi(x_t,x_t^*) = (D_h(x_t^* || x_t) + D_{g_t}(x_t^* || x_t)) + \frac{m}{2}\norm{x_t^* - x_t}^2$ and the potential difference as $\Delta \phi = \phi(x_t,x_t^*) - \phi(x_{t-1},x_{t-1}^*)$. Subtracting $\phi(x_{t-1},x_{t-1}^*)$ from both sides of the above inequality, we get
\begin{align}\label{eqn6}
\begin{split}
&f_t(x_t) + D_h(x_t || x_{t-1}) + \Delta \phi \\
    &\leq \underbrace{(f_t(x_t^*) + D_{g_t}(x_t^* || v_t))}_{X_1} + \underbrace{ D_h(x_t^* || x_{t-1}) - (D_h(x_{t-1}^* || x_{t-1}) + D_{g_{t-1}}(x_{t-1}^* || x_{t-1})) - \frac{m}{2}\norm{x_{t-1}^* - x_{t-1}}^2}_{X_2}
\end{split}
\end{align}
We now analyze $X_2$
\begin{align}
D_h(&x_t^* || x_{t-1}) - \left(D_h(x_{t-1}^* || x_{t-1}) + D_{g_{t-1}}(x_{t-1}^* || x_{t-1}) \right) - \frac{m}{2}\norm{x_{t-1}^* - x_{t-1}}^2\label{eqn2}\\
=&\text{ }(D_h(x_t^* || x_{t-1}) - D_h(x_{t-1}^* || x_{t-1})) - D_{g_{t-1}}(x_{t-1}^* || x_{t-1}) - \frac{m}{2}\norm{x_{t-1}^* - x_{t-1}}^2\\
\begin{split}
=& \text{ }D_h(x_t^* || x_{t-1}^*) + \langle \nabla h(x_{t-1}) - \nabla h(x_{t-1}^*), x_{t-1}^* - x_t^* \rangle - D_{g_{t-1}}(x_{t-1}^* || x_{t-1}) - \frac{m}{2}\norm{x_{t-1}^* - x_{t-1}}^2
\end{split}\\
\begin{split}
\leq& \text{ }D_h(x_t^* || x_{t-1}^*) + \|\nabla h(x_{t-1}) - \nabla h(x_{t-1}^*)\|_2 \| x_{t-1}^* - x_t^* \|_2 - D_{g_{t-1}}(x_{t-1}^* || x_{t-1}) - \frac{m}{2}\norm{x_{t-1}^* - x_{t-1}}^2
\end{split}\\
\begin{split}
\leq& \text{ }D_h(x_t^* || x_{t-1}^*) + \beta\|x_{t-1} - x_{t-1}^*\|_2 \| x_{t-1}^* - x_t^* \|_2 - D_{g_{t-1}}(x_{t-1}^* || x_{t-1}) - \frac{m}{2}\norm{x_{t-1}^* - x_{t-1}}^2
\end{split}\\
\begin{split}
\leq& \text{ }D_h(x_t^* || x_{t-1}^*) + \frac{1}{2} \left(\frac{\beta^2}{\alpha'_{t-1} + m} \right) \| x_{t-1}^* - x_t^* \|_2^2 + \left(\frac{\alpha' + m}{2}\right) \|x_{t-1} - x_{t-1}^*\|_2^2\\
&- D_{g_{t-1}}(x_{t-1}^* || x_{t-1}) - \frac{m}{2}\norm{x_{t-1}^* - x_{t-1}}^2
\end{split}\label{eqn3}\\
\begin{split}
=&\text{ } D_h(x_t^* || x_{t-1}^*) + \frac{1}{2} \left(\frac{\beta^2}{\alpha'_{t-1} + m} \right) \| x_{t-1}^* - x_t^* \|_2^2 + \left( \frac{\alpha'_{t-1}}{2} \|x_{t-1} - x_{t-1}^*\|_2^2  - D_{g_{t-1}}(x_{t-1}^* || x_{t-1}) \right)
\end{split}\label{eqn4}\\
\begin{split}
\leq& \text{ }D_h(x_t^* || x_{t-1}^*) + \frac{1}{2} \left(\frac{\beta^2}{\alpha'_{t-1} + m} \right) \| x_{t-1}^* - x_t^* \|_2^2
\end{split} \leq \left( 1 + \frac{\beta^2/ \alpha}{\alpha'_{t-1} + m} \right) D_h(x_t^* || x_{t-1}^*) \label{eqn8}
\end{align}
Therefore,
\begin{align}
f_t(x_t) + D_h(x_t || x_{t-1}) + \Delta \phi &\leq f_t(x_t^*) + D_{g_{t}}(x_t^* || v_t) + \left( 1 + \frac{\beta^2/ \alpha}{\alpha'_{t-1} + m} \right) D_h(x_t^* || x_{t-1}^*)\label{eqn5}\\
& \leq f_t(x_t^*) + \frac{\beta'_t}{2}\|x_t^* - v_t\|^2 + \left( 1 + \frac{\beta^2/ \alpha}{\alpha'_{t-1} + m} \right) D_h(x_t^* || x_{t-1}^*)\\
&= \left( 1 + \frac{\beta'_t}{m} \right)f_t(x_t^*) + \left( 1 + \frac{\beta^2/ \alpha}{\alpha'_{t-1} + m} \right) D_h(x_t^* || x_{t-1}^*)\\
&\leq \left( 1 + \frac{\max_t \beta'_t}{m} \right)f_t(x_t^*) + \left( 1 + \frac{\beta^2/ \alpha}{\min_t \alpha'_{t-1} + m} \right) D_h(x_t^* || x_{t-1}^*)
\end{align}
\[
\pushQED{\qed} 
\qedhere
\popQED
\]  

\subsection{Proofs of Theorem \ref{thm:NOFI_AI_perf} (ii) and Corollary \ref{corr:LAI_best_of_both}}\label{proof:NOFI_AI_CR_proofs}
To bound the competitive ratio of \LAItxt{}, we use Theorem \ref{thm:adv_analysis_framework} by writing \LAItxt{} in the form: 
\begin{align}
    A(x_t - v_t) + (x_t - x_{t-1}) + (\Tilde{C}_t^{-1} - I - A)(x_t - v_t) = 0
\end{align}
or in the form of an optimization problem as
\begin{align}
    x_t = \argmin_x \frac{1}{2} (x_t - v_t)^T A(x_t - v_t) + \frac{1}{2}\|x_{t} - x_{t-1}\|_2^2 + \frac{1}{2} (x_t - v_t)^T (\Tilde{C}_t^{-1} - I - A)(x_t - v_t).
\end{align}
Therefore, $g_t(x) = \frac{1}{2}x^T (\Tilde{C}_t^{-1} - 
I - A) x$. Now, the eigenvalues satisfy the recursion
$\frac{1}{\Tilde{\lambda}_i^t} = 2 + \lambda_i^A - \Tilde{\lambda}_i^{t+1}$
as result of $\Tilde{C}_t^{-1} = 2I + A - \Tilde{C}_{t+1}$ and $\Tilde{C}_t$ having same eigenvectors as $A$. The above property, along with $\Tilde{\lambda}_i^T \geq \lambda_i^L$ (as $\gamma \in [0,1]$), gives $\Tilde{\lambda}_i^t \leq  \Tilde{\lambda}_i^{t+1}$ $\forall$ $i$ and
$C_L \preccurlyeq \Tilde{C}_1 \preccurlyeq \ldots \preccurlyeq \Tilde{C}_T$.
The proof of this fact is straightforward and is done in Corollary \ref{corr:Ct_add_props}. Consequently,
\begin{align}
    C_L^{-1} - (I+A) \succcurlyeq \Tilde{C}_1^{-1} - (I+A) \succcurlyeq \ldots  \succcurlyeq \Tilde{C}_T^{-1} - (I+A)
\end{align}
or
\begin{align}
    \frac{1}{\lambda_i^L} - 1 - \lambda_i^A \geq \frac{1}{\Tilde{\lambda}_i^1} - 1 - \lambda_i^A \geq \ldots \geq \frac{1}{\Tilde{\lambda}_i^T} - 1 - \lambda_i^A.
\end{align}
This means
\begin{align}
    \max_t \beta'_t &= \max_i \left\{ \frac{1}{\lambda_i^L} - 1 - \lambda_i^A \right\} = \frac{\sqrt{\left(\lambda_{\max}^A\right)^2 + 4 \lambda_{\max}^A} - \lambda_{\max}^A}{2} = \frac{\lambda_{\min}^A}{2}\left(\sqrt{\kappa(A)^2 +  \frac{4\kappa(A)}{\lambda_{\min}^A}} - \kappa(A) \right)\\
    \min_{t} \alpha'_{t-1} &=  \min_i \left\{ \frac{1}{\Tilde{\lambda}_i^T} - 1 - \lambda_i^A \right\} = \min_i \left\{\frac{\lambda_i^A}{2}\left( \left(1 + \frac{4}{\lambda_i^A}\right)^{\frac{\gamma}{2}} - 1 \right) \right\}
\end{align}
Since,  $\frac{y}{2}\left( \left(1 + \frac{4}{y}\right)^{\frac{\gamma}{2}} - 1 \right)$ is a non-decreasing function in $y \geq 0$ and any $\gamma \in [0,1]$, we have
\begin{align}
    \min_{t} \alpha'_{t-1} &= \frac{\lambda_{\min}^A}{2}\left( \left(1 + \frac{4}{\lambda_{\min}^A}\right)^{\frac{\gamma}{2}} - 1 \right)
\end{align}
Using Theorem \ref{thm:adv_analysis_framework}, 
\begin{align}
    \text{CR}_{\LAImath{}} = \max \left\{ 1 + \frac{1}{2}\left(\sqrt{\kappa(A)^2 +  \frac{4\kappa(A)}{\lambda_{\min}^A}} - \kappa(A) \right) , 1 + \frac{1}{\frac{\lambda_{\min}^A}{2}\left( \left(1 + \frac{4}{\lambda_{\min}^A}\right)^{\frac{\gamma}{2}} + 1 \right)} \right\}
\end{align}
\[
\pushQED{\qed} 
\qedhere
\popQED
\] 

\begin{corollary}\label{corr:NOFI_AI_cr_order}
In the adversarial setting where $\lambda_{\max}^A \ll 1$ , \LAItxt{} has a competitive ratio of 
\begin{equation*}
    \text{CR}_{\LAImath{}} \leq 1 + \max \left\{ \sqrt{\frac{\kappa(A)}{\lambda_{\min}^A}} , \frac{2^{1-\gamma}}{\left(\lambda_{\min}^A \right)^{1-\frac{\gamma}{2}}} \right\} \text{ 
    } \forall \text{ } \gamma \in (0,1].
\end{equation*}
\end{corollary}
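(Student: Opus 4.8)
The plan is to reduce the corollary to the exact competitive-ratio bound already furnished by Theorem~\ref{thm:NOFI_AI_perf}~(ii) and then estimate, to leading order in the regime $\lambda_{\max}^A \ll 1$, each of the two terms appearing inside the maximum. The observation that organizes the whole argument is that $\lambda_{\max}^A \ll 1$ forces $\kappa(A)\,\lambda_{\min}^A = \lambda_{\max}^A \ll 1$, so that the ``correction'' terms ($-\kappa(A)$ in the first slot and the additive $1$ in the second) are of strictly lower order than the dominant growth. I would therefore bound the two slots independently and take the maximum at the end.

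For the first term, $\tfrac{1}{2}\big(\sqrt{\kappa(A)^2 + 4\kappa(A)/\lambda_{\min}^A} - \kappa(A)\big)$, I would rationalize rather than Taylor-expand, writing
\begin{equation*}
\sqrt{\kappa(A)^2 + \tfrac{4\kappa(A)}{\lambda_{\min}^A}} - \kappa(A) = \frac{4\kappa(A)/\lambda_{\min}^A}{\sqrt{\kappa(A)^2 + 4\kappa(A)/\lambda_{\min}^A} + \kappa(A)} \le \frac{4\kappa(A)/\lambda_{\min}^A}{\sqrt{4\kappa(A)/\lambda_{\min}^A}} = 2\sqrt{\tfrac{\kappa(A)}{\lambda_{\min}^A}},
\end{equation*}
so that the first slot is at most $\sqrt{\kappa(A)/\lambda_{\min}^A}$. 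This bound is in fact exact for every $A\succ 0$; the regime $\lambda_{\max}^A\ll 1$ is only needed to certify that it is tight (the discarded $\kappa(A)$ is negligible against $2\sqrt{\kappa(A)/\lambda_{\min}^A}$ precisely because $\kappa(A)\lambda_{\min}^A\ll 1$).

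For the second term I would show $\frac{2}{\lambda_{\min}^A\big((1+4/\lambda_{\min}^A)^{\gamma/2}+1\big)} \le \frac{2^{1-\gamma}}{(\lambda_{\min}^A)^{1-\gamma/2}}$, which after clearing denominators is equivalent to $2^{\gamma}(\lambda_{\min}^A)^{-\gamma/2} \le (1+4/\lambda_{\min}^A)^{\gamma/2}+1$. This follows at once from monotonicity of $x\mapsto x^{\gamma/2}$ on $x\ge 0$: since $1+4/\lambda_{\min}^A \ge 4/\lambda_{\min}^A$, one has $(1+4/\lambda_{\min}^A)^{\gamma/2} \ge (4/\lambda_{\min}^A)^{\gamma/2} = 2^{\gamma}(\lambda_{\min}^A)^{-\gamma/2}$. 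Substituting both slot bounds into Theorem~\ref{thm:NOFI_AI_perf}~(ii) and taking the maximum yields the claimed estimate.

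The delicate point, and the only place the hypotheses $\gamma\in(0,1]$ and $\lambda_{\max}^A\ll 1$ genuinely enter, is ensuring the second-slot estimate reflects the correct order rather than being merely a valid-but-loose inequality. For $\gamma>0$ and $\lambda_{\min}^A\ll 1$ the power $(1+4/\lambda_{\min}^A)^{\gamma/2}$ diverges, so it dominates the additive $1$ and the bound $2^{1-\gamma}/(\lambda_{\min}^A)^{1-\gamma/2}$ is attained up to lower-order terms; at $\gamma=0$ the power collapses to the constant $1$, the additive $1$ is no longer negligible, and the stated form overestimates by a constant factor, which is exactly why the corollary excludes $\gamma=0$. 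I expect this bookkeeping --- confirming that each discarded term is lower order in the regime --- to be the main (though routine) obstacle, the two algebraic inequalities themselves being immediate.
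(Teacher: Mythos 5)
Your proposal is correct, and it follows the same overall reduction as the paper: both arguments simply take the two slots of the maximum in Theorem~\ref{thm:NOFI_AI_perf}~(ii) and simplify them in the regime $\lambda_{\max}^A \ll 1$. The difference is in how the simplification is justified. The paper's proof is an informal leading-order substitution: it uses $\frac{1}{\lambda_{\min}^A} \gg \kappa(A)$ to drop $\kappa(A)^2$ under the square root, drop the subtracted $\kappa(A)$, and replace $\left(1+\frac{4}{\lambda_{\min}^A}\right)^{\gamma/2}+1$ by $\left(\frac{4}{\lambda_{\min}^A}\right)^{\gamma/2}$, writing the result with equality signs that are only asymptotic. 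Your proof replaces each of these steps with an exact inequality --- rationalization of the square root for the first slot, and monotonicity of $x \mapsto x^{\gamma/2}$ together with discarding the additive $1$ for the second --- both of which hold for \emph{every} $A \succ 0$ and every $\gamma \in [0,1]$, not only in the stated regime. This buys something real: your argument shows the corollary's displayed inequality is unconditionally valid, with the hypothesis $\lambda_{\max}^A \ll 1$ (and $\gamma > 0$) needed only to certify that the bound has the right order, i.e., that the discarded terms are genuinely lower order. Your closing remark about $\gamma = 0$ is also accurate: there the inequality still holds but overestimates the second slot by a constant factor, so excluding $\gamma = 0$ is a matter of tightness rather than validity. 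In short, your proof is a strictly more rigorous rendering of the paper's computation, and nothing in it needs repair.
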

\begin{proof}
With $ \lambda_{\max}^A \ll 1 \implies \frac{1}{\lambda_{\min}^A} \gg \kappa(A) > 1$ and $\gamma \in (0,1]$
\begin{align}
    \text{CR}_{\LAImath{}} =  1 + \max \left\{\sqrt{\frac{\kappa(A)}{\lambda_{\min}^A}} , \frac{1}{\frac{\lambda_{\min}^A}{2} \cdot \left( \frac{4}{\lambda_{\min}^A} \right)^{\frac{\gamma}{2}} }  \right\} = 1 + \max \left\{ \sqrt{\frac{\kappa(A)}{\lambda_{\min}^A}} , \frac{2^{1-\gamma}}{\left(\lambda_{\min}^A \right)^{1-\frac{\gamma}{2}}} \right\}.
\end{align}
For $\gamma = 1$, this translates to
\begin{align}
    \text{CR}_{\nofi{}} = 1 + \max \left\{ \sqrt{\frac{\kappa(A)}{\lambda_{\min}^A}} , \frac{1}{\left(\lambda_{\min}^A \right)^{\frac{1}{2}}} \right\} = 1 + \sqrt{\frac{\kappa(A)}{\lambda_{\min}^A}}
\end{align}
\end{proof}

\section{General Value function derivative and Proof of Theorem \ref{thm:gen_stochastic}}\label{sec:gen_stochastic}
Before the proof of the theorem, we present an important property of the value function that is applicable to a much broader class of convex hitting costs and switching costs.
\begin{theorem}\label{thm:value_fn_derivative}
Consider stochastic convex hitting costs $f_t(x)$ and switching costs $D_h(x_t||x_{t-1})$ where $h(\cdot)$ is a convex function and $D_h(x_t||x_{t-1})$ is the Bregman divergence and the optimization problem to find the online optimal policy
\begin{align*}
    \argmin_{\{x_t\}_{t=1}^T: x_t \in \mathcal{F}_t} \E \left[\sum_{t=1}^T f_t(x_t) + D_h(x_t||x_{t-1}) \right].
\end{align*}
The value function at round $t$, $V_t(x_{t-1},v_t)$, satisfies the following property
\begin{align*}
    \nabla_{x_{t-1}} V(x_{t-1},v_t) = \nabla^2 h(x_{t-1}) (x_{t-1} - x_t)
\end{align*}
\end{theorem}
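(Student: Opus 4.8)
The plan is to prove the identity by the envelope theorem (Danskin's theorem) applied to the Bellman recursion~\eqref{eqn:value_fn_defn}, followed by a direct differentiation of the Bregman switching cost. Write the per-round objective as
\begin{equation*}
    F_t(x, x_{t-1}) := f_t(x) + D_h(x\|x_{t-1}) + \E\left[V_{t+1}(x, v_{t+1}) \mid \mathcal{F}_t\right],
\end{equation*}
and let $x_t = x_t(x_{t-1})$ denote the optimal action, so that $V_t(x_{t-1}, v_t) = F_t(x_t(x_{t-1}), x_{t-1})$. The key structural observation I would exploit is that $x_{t-1}$ enters $F_t$ \emph{only} through the switching cost $D_h(x\|x_{t-1})$: the hitting cost $f_t(x)$ depends solely on the decision $x$, and the cost-to-go term $\E[V_{t+1}(x, v_{t+1}) \mid \mathcal{F}_t]$ is a function of $x$ and the conditioning information alone, carrying no explicit dependence on the free variable $x_{t-1}$.

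Given this, the first step is to differentiate $V_t$ via the chain rule,
\begin{equation*}
    \nabla_{x_{t-1}} V_t(x_{t-1}, v_t) = \left(\frac{\partial x_t}{\partial x_{t-1}}\right)^{\!\!T} \nabla_x F_t(x_t, x_{t-1}) + \nabla_{x_{t-1}} F_t(x_t, x_{t-1}),
\end{equation*}
and then invoke the first-order optimality condition $\nabla_x F_t(x_t, x_{t-1}) = 0$, which holds because $x_t$ minimizes $F_t(\cdot, x_{t-1})$. This annihilates the first term, so only the explicit partial derivative survives, reducing the claim to $\nabla_{x_{t-1}} V_t = \nabla_{x_{t-1}} D_h(x_t\|x_{t-1})$ with $x_t$ held fixed.

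The second step is the explicit computation. Using $D_h(x_t\|x_{t-1}) = h(x_t) - h(x_{t-1}) - \langle \nabla h(x_{t-1}), x_t - x_{t-1}\rangle$, I differentiate each term in $x_{t-1}$ with $x_t$ frozen: the term $-h(x_{t-1})$ contributes $-\nabla h(x_{t-1})$, and the product rule on the inner product (noting that the Jacobian of $\nabla h$ is the symmetric Hessian $\nabla^2 h$) yields $-\nabla^2 h(x_{t-1})(x_t - x_{t-1}) + \nabla h(x_{t-1})$. The two $\nabla h(x_{t-1})$ contributions cancel, leaving exactly $-\nabla^2 h(x_{t-1})(x_t - x_{t-1}) = \nabla^2 h(x_{t-1})(x_{t-1} - x_t)$, which is the asserted identity.

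The main obstacle is making the envelope step rigorous, namely guaranteeing that the minimizer $x_t(x_{t-1})$ is well-defined and differentiable so that $V_t$ is itself continuously differentiable in $x_{t-1}$. I would secure this by a downward induction on $t$ from the terminal round, establishing at each step that $V_t$ is convex and $C^1$: $f_t$ is convex, $D_h(\cdot\|x_{t-1})$ is convex in its first argument since $h$ is convex, and conditional expectation preserves convexity of the inductively smooth $V_{t+1}$. Imposing strong convexity on $f_t$ (or on $h$) makes $F_t(\cdot, x_{t-1})$ strongly convex, so the minimizer is unique, and its differentiability then follows from the implicit function theorem applied to $\nabla_x F_t(x_t, x_{t-1}) = 0$, whose Hessian $\nabla_x^2 F_t$ is invertible. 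With this regularity in hand, the chain-rule and cancellation steps above are fully justified and complete the proof.
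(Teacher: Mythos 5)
Your proposal is correct and follows essentially the same route as the paper's proof: the paper also differentiates $V_t(x_{t-1},v_t) = f_t(x_t) + D_h(x_t\|x_{t-1}) + \E[V_{t+1}(x_t,v_{t+1})\mid\mathcal{F}_t]$ via the chain rule with the Jacobian $J = \partial x_t/\partial x_{t-1}$, groups the $J$-multiplied terms into the first-order optimality condition (which vanishes), and is left with the explicit partial $-\nabla^2 h(x_{t-1})(x_t - x_{t-1})$ --- exactly your envelope-theorem argument made by hand. Your added regularity discussion (uniqueness of the minimizer and differentiability of $x_t(x_{t-1})$ via the implicit function theorem) is a sensible refinement that the paper leaves implicit, but it does not change the substance of the argument.
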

\begin{proof}
First, it is clear that the value function at round $t$ is the optimal cost starting from round $t$, means it depends only in the starting state, that is, $(x_{t-1},v_t)$. Further it is obtained by solving the following optimization problem where $\E[V_{t+1}(x,v_{t+1})|\F_t]$ is a function of $v_t$ because of the expectation with respect to $\F_t$.
\begin{align}
    V_{(x_{t-1},v_t)} &= \argmin_x f_t(x) + D_h(x||x_{t-1}) + \E[V_{t+1}(x,v_{t+1})|\F_t]\\
    &= \argmin_x f_t(x) + (h(x) - h(x_{t-1}) - \nabla h(x_{t-1})^T (x - x_{t-1})) + \E[V_{t+1}(x,v_{t+1})|\F_t]
\end{align}
Therefore, we get the optimal action $x_t$ by solving the following differential equation
\begin{align}
    \nabla_x f_t(x) + (\nabla_x h(x) - \nabla h(x_{t-1})) + \nabla_x \E[V(x,v_{t+1}) | \F_{t}] = 0.
\end{align}
Solving this gives $x_t$ as an operator on $x_{t-1}$ and $v_t$. We define the Jacobian of $x_t$ with respect to $x_{t-1}$ as $\left[\frac{\delta (x_t)_j}{\delta (x_{t-1})_i} \right]_{i,j} = J \in \R^{d\times d}$. The optimal cost-to-go at round $t$ is
\begin{align}
    V(x_{t-1},v_t) = f_t(x_t) + D_h(x_t,x_{t-1}) + \E[V(x_t,v_{t+1}) | \F_{t}].
\end{align}
Therefore,
\begin{align}
\begin{split}
    &\nabla_{x_{t-1}} V(x_{t-1},v_t)\\
    =& J \nabla_{x_{t}} f_t(x_t) + J \nabla_{x_{t}} h(x_t) - \nabla h(x_{t-1})\\
    &- (J-I)\nabla h(x_{t-1}) - \nabla^2 h (x_{t-1})(x_t - x_{t-1}) +  J \nabla_{x_{t}} \E[V(x_t,v_{t+1}) | \F_{t}]
\end{split}\\
\begin{split}
    =& J (\nabla_{x_{t}} f_t(x_t) + \nabla_{x_{t}} h(x_t) - \nabla_{x_{t}} h(x_{t-1}) + \nabla_{x_{t}} \E[V(x_t,v_{t+1}) | \F_{t}]) - \nabla^2 h (x_{t-1})(x_t - x_{t-1})
\end{split}\\
=& - \nabla^2 h(x_{t-1})(x_t - x_{t-1})
\end{align}
\end{proof}
We use this in the proof of Theorem \ref{thm:gen_stochastic} where $\nabla^2 h(x_{t-1}) = I$ as our switching cost is $\frac{1}{2}\norm{x_t - x_{t-1}}^2$.
\begin{proof}[Proof of Theorem \ref{thm:gen_stochastic}]
Irrespective of the stochastic process $\{v_t\}_{t=1}^T$, in the final round we know the current minimizer $v_T$ and therefore the optimal action is given by Lemma \ref{lemma: val_fn_T} as $x_T = C_T x_{T-1} + (I-C_T)v_T$. Therefore, the value function derivative at round $T$ is
\begin{align}
    \nabla_{x_{T-1}} V(x_{T-1},v_T) = x_{T-1} - x_T = (I-C_T)x_{T-1} - (I-C_T)v_T
\end{align}
The optimal action at round $T-1$ is obtained by solving
\begin{align}
    A(x_{T-1} - v_{T-1}) + (x_{T-1} - x_{T-2}) + \E[\nabla_{x_{T-1}} V(x_{T-1},v_T)|\F_{T-1}] = 0
\end{align}
which is
\begin{align}
    (2I + A - C_T)x_{T-1} &= x_{T-2} + (I + A - C_T)v_{T-1} + (I-C_T)\E[v_T - v_{T-1}|\F_{T-1}]\\
    C_{T-1}^{-1} x_{T-1} &= x_{T-2} + (C_{T-1}^{-1} - I)v_{T-1} + (I-C_T)\E[v_T - v_{T-1}|\F_{T-1}]
\end{align}
which gives
\begin{align}
    x_{T-1} = C_{T-1} x_{T-2} + (I - C_{T-1})v_{T-1} + C_{T-1}(I-C_T)\E[v_T - v_{T-1}|\F_{T-1}]
\end{align}
proving the theorem for $t = T-1$ and $t=T$ (sum from $T+1$ to $T$ is zero). Suppose the result holds for some $t \in \{1,\ldots,T\}$ then the optimal action at round $t-1$ is a solution of
\begin{align}
    A(x_{t-1} - v_{t-1}) + (x_{t-1} - x_{t-2}) + \E[x_{t-1} - x_t | \F_{t-1}] = 0
\end{align}
or
\begin{align}
\begin{split}
    (2I + A - C_{t})x_{t-1} =& x_{t-2} + Av_{t-1} + (I-C_t)\E[v_t | \F_{t-1}]\\
    &+ \sum_{s=t+1}^T \left(\prod_{q=t}^{s-1} C_q \right) (I-C_s)\E[v_s - v_{s-1}| \mathcal{F}_{t-1}]
\end{split}\\
\begin{split}
    =& x_{t-2} + (I + A - C_t) v_{t-1} + (I-C_t)\E[v_t -v_{t-1}| \F_{t-1}]\\
    &+ \sum_{s=t+1}^T \left(\prod_{q=t}^{s-1} C_q \right) (I-C_s)\E[v_s - v_{s-1}| \mathcal{F}_{t-1}]
\end{split}\\
\begin{split}
    =& x_{t-2} + (C_{t-1}^{-1} - I) v_{t-1} + (I-C_t)\E[v_t -v_{t-1}| \F_{t-1}]\\
    &+ \sum_{s=t+1}^T \left(\prod_{q=t}^{s-1} C_q \right) (I-C_s)\E[v_s - v_{s-1}| \mathcal{F}_{t-1}]
\end{split}
\end{align}
which gives
\begin{align}
\begin{split}
    x_{t-1} =& C_{t-1} x_{t-2} + (I-C_{t-1})v_{t-1} + C_{t-1}(I-C_t)\E[v_t -v_{t-1}| \F_{t-1}]\\
    &+ \sum_{s=t+1}^T C_{t-1}\left(\prod_{q=t}^{s-1} C_q \right) (I-C_s)\E[v_s - v_{s-1}| \mathcal{F}_{t-1}]
\end{split}\\
\begin{split}
    =& C_{t-1} x_{t-2} + (I-C_{t-1})v_{t-1} + \sum_{s=t}^T \left(\prod_{q=t-1}^{s-1} C_q \right) (I-C_s)\E[v_s - v_{s-1}| \mathcal{F}_{t-1}]
\end{split}
\end{align}
proving Theorem \ref{thm:gen_stochastic} by induction.
\end{proof}

\section{Additional proofs in the stochastic environment}

\subsection{Proof of Theorem \ref{thm:hindsight_static_opt}}\label{proof:hindsight_static_opt}
We want the hindsight static optimal for $f_t(x) = \frac{\lambda}{2}\|x-v_t\|_2^2$ and switching cost $\frac{1}{2}\|x_t - x_{t-1}\|_2^2$ with the minimizers evolve as martingale. We write the minimizer at $t$ as a sum of the initial point $x_0$ and martingale differences $u_k = v_k - v_{k-1}$ with $v_0 = x_0$, described below
\begin{align}
    v_t = x_0 + \sum_{k=1}^t u_k.
\end{align}
The martingale differences $u_k$ are assumed to have a variance of at least $\sigma^2>0$. The hindsight optimal minimizing the total cost over the horizon is
\begin{align}
    x^* = \argmin_x \frac{1}{2}\|x - x_0\|_2^2 + \sum_{t=1}^T \frac{\lambda}{2}\|x-v_t\|_2^2.
\end{align}
The optimal solution will satisfy the following expression, obtained by differentiating the objective function,
\begin{align}
     (1 + \lambda T)x^*  &= x_0 + \lambda \sum_{t=1}^T v_t\\
\end{align}
which gives
\begin{align*}
     x^* &= \left(\frac{x_0}{1 + \lambda T} \right) + \left(\frac{\lambda T}{1 + \lambda T}\right) \left( \frac{\sum_{t=1}^T v_t}{T}\right)\\
     &= \left(\frac{x_0}{1 + \lambda T} \right) + \left(\frac{\lambda T}{1 + \lambda T}\right) \left( x_0 + \frac{\sum_{t=1}^T \sum_{k=1}^t u_k}{T}\right)\\
     &= x_0 + \left(\frac{\lambda T}{1 + \lambda T}\right) \left(\frac{\sum_{t=1}^T \sum_{k=1}^t u_k}{T}\right)\\
     &= x_0 + \left(\frac{\lambda T}{1 + \lambda T}\right) \left(\frac{\sum_{k=1}^T u_k \sum_{t=k}^T 1 }{T}\right)\\
     &= x_0 + \left(\frac{\lambda T}{1 + \lambda T}\right) \left(\frac{\sum_{k=1}^T (T-k+1) u_k}{T}\right)\\
      &= x_0 + \sum_{k=1}^T 
 \frac{\lambda(T-k+1) u_k}{1 + \lambda T}
\end{align*}
This implies
\begin{align}
    x^* - x_0 &= \sum_{k=1}^T 
 \frac{\lambda(T-k+1) u_k}{1 + \lambda T}\\
 x^* - v_t &= \sum_{k=1}^T \frac{\lambda(T-k+1) u_k}{1 + \lambda T} - \sum_{k=1}^t u_k.
\end{align}
We can simplify $x^* - v_t$ to a more usable expression,
\begin{align}
    x^* - v_t &= \sum_{k=1}^T \frac{\lambda(T-k+1) u_k}{1 + \lambda T} - \sum_{k=1}^t u_k\\
    &= \sum_{k=t+1}^T \frac{\lambda(T-k+1) u_k}{1 + \lambda T} + \sum_{k=1}^t \left( \frac{\lambda(T-k+1)}{1 + \lambda T} - 1 \right) u_k\\
    &= \sum_{k=t+1}^T \left(\frac{\lambda(T-k+1)}{1 + \lambda T}\right)u_k - \sum_{k=1}^t \left( \frac{\lambda(k-1) + 1}{1 + \lambda T}\right) u_k\\
\end{align}
Therefore, the total expected cost of using $x^*$ is
\begin{align}
    &\E[\text{Cost}_{x^*}[1,T]] - \frac{1}{2}\E\norm{x^* - x_0}^2 \\
    &= \sum_{t=1}^T \frac{\lambda}{2}\E\norm{x^*-v_t}^2\\
    &= \sum_{t=1}^T \frac{\lambda}{2}\E\norm{\sum_{k=t+1}^T \left(\frac{\lambda(T-k+1)}{1 + \lambda T}\right)u_k - \sum_{k=1}^t \left( \frac{\lambda(k-1) + 1}{1 + \lambda T}\right) u_k}^2\\
    &= \sum_{t=1}^T \frac{\lambda}{2(1 + \lambda T)^2} \E \norm{\sum_{k=t+1}^T \lambda(T-k+1)u_k - \sum_{k=1}^t (\lambda(k-1) + 1) u_k}^2 \\
\end{align}
Now consider $i>j$ for the following,
\begin{align}
    \E[u_j^Tu_i] &= \E[\E[u_j^T u_i | \mathcal{F}_{i-1}]]\\
    &= \E[u_j^T\E[ u_i | \mathcal{F}_{i-1}]]\\
    &= \E[u_j^T 0]
    = 0.
\end{align}
This gives us
\begin{align}
    &\E[\text{Cost}_{x^*}[1,T]] - \frac{1}{2}\E \norm{x^* - x_0}^2 \\
    &= \frac{\lambda}{2(1 + \lambda T)^2}\sum_{t=1}^T \left(\sum_{k=t+1}^T \lambda^2(T-k+1)^2\sigma^2 + \sum_{k=1}^t (\lambda(k-1) + 1)^2 \sigma^2 \right)\\
    &= \frac{\lambda}{2(1 + \lambda T)^2}\left( \sum_{t=1}^T \sum_{k=t+1}^T \lambda^2(T-k+1)^2\sigma^2 + \sum_{t=1}^T \sum_{k=1}^t (\lambda(k-1) + 1)^2 \sigma^2 \right)\\
    &= \frac{\lambda}{2(1 + \lambda T)^2}\left( \sum_{k=2}^T \lambda^2(T-k+1)^2\sigma^2 \sum_{t=1}^{k-1} 1 + \sum_{k=1}^T (\lambda(k-1) + 1)^2 \sigma^2 \sum_{t=k}^T 1 \right)\\
    &= \frac{\lambda}{2(1 + \lambda T)^2}\left( \sum_{k=2}^T (k-1)\lambda^2(T-k+1)^2\sigma^2 + \sum_{k=1}^T (T-k+1) (\lambda(k-1) + 1)^2 \sigma^2  \right)\\
    &= \frac{\lambda \sigma^2}{2(1 + \lambda T)^2}\sum_{k=2}^T (T-k+1)\bigg[\lambda^2 (k-1) (T-k+1) + (\lambda(k-1) + 1)^2\bigg] + \frac{T\lambda \sigma^2}{2(1 + \lambda T)^2} \\
    &= \frac{\lambda \sigma^2}{2(1 + \lambda T)^2}\sum_{k=2}^T (T-k+1)\bigg[\lambda^2 (k-1)T - \lambda^2 (k-1)^2 + (\lambda(k-1) + 1)^2\bigg] + \frac{T\lambda \sigma^2}{2(1 + \lambda T)^2} \\
    &= \frac{\lambda \sigma^2}{2(1 + \lambda T)^2}\sum_{k=2}^T (T-k+1)\bigg[\lambda^2 (k-1)T +2\lambda(k-1) + 1\bigg] + \frac{T\lambda \sigma^2}{2(1 + \lambda T)^2} \\
    &= \frac{\lambda \sigma^2}{2(1 + \lambda T)^2}\sum_{k=2}^T (T-k+1)\bigg[\lambda (k-1) (\lambda T +2) + 1\bigg] + \frac{T\lambda \sigma^2}{2(1 + \lambda T)^2} \\
    &= \frac{\sigma^2\lambda^2 (\lambda T +2) }{2(1 + \lambda T)^2}\sum_{k=2}^T (T-k+1)(k-1)+ \frac{T\lambda \sigma^2}{2(1 + \lambda T)^2} + \frac{\lambda \sigma^2}{2(1 + \lambda T)^2}\sum_{k=2}^T (T-k+1) \\
    &= \frac{\sigma^2\lambda^2 (\lambda T +2) }{2(1 + \lambda T)^2} \left( T\sum_{k=2}^T (k-1) - \sum_{k=2}^T (k-1)^2 \right) + \frac{\lambda \sigma^2}{2(1 + \lambda T)^2}\left(\frac{T(T+1)}{2} \right)\\
    &=\frac{\sigma^2\lambda^2 (\lambda T +2) }{2(1 + \lambda T)^2} \left( \frac{T^2 (T-1)}{2} - \frac{(T-1)T(2T-1)}{6} \right) + \frac{\lambda \sigma^2}{2(1 + \lambda T)^2}\left(\frac{T(T+1)}{2} \right)\\
    &=\frac{\sigma^2\lambda^2 (\lambda T +2) }{2(1 + \lambda T)^2} \left( \frac{(T-1)T(T+1)}{6}\right) + \frac{\lambda \sigma^2}{2(1 + \lambda T)^2}\left(\frac{T(T+1)}{2} \right)\\
    &=\frac{\sigma^2\lambda^2}{12} \left( \frac{(\lambda T +2)(T-1)T(T+1)}{(1 + \lambda T)^2}\right) + \frac{\lambda \sigma^2}{2(1 + \lambda T)^2}\left(\frac{T(T+1)}{2} \right)\\
    &= \Theta(T^2)
\end{align}
\[
\pushQED{\qed} 
\qedhere
\popQED
\]    

\subsection{Properties of $\{C_t\}_t$}\label{proof:Ct_main_props}
\begin{lemma}\label{lemma:Ct_main_props}
For any $t \in \{1, \ldots, T\}$, the matrix $C_t$ satisfies the following properties:
\begin{enumerate}[{\normalfont(a)}]
    \item $C_t$ is invertible
    \item $C_t$ has the same set of eigenvectors as $A$.
    \item Eigenvalues of $C_t$ lie in $(0,1)$, and hence, $C_t$ is positive definite.
\end{enumerate}
\end{lemma}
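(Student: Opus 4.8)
The plan is to proceed by backward induction on $t$, from $t = T$ down to $t = 1$, establishing all three properties simultaneously. The crucial structural observation is that every matrix appearing in the recursion $C_t^{-1} = 2I + A - C_{t+1}$ is a rational function of $A$, so the entire sequence $\{C_t\}_t$ is simultaneously diagonalizable with $A$. This collapses the matrix recursion into a decoupled scalar recursion on the eigenvalues, which is where essentially all the work takes place.

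For the base case $t = T$, we have $C_T = (I+A)^{-1}$. Since $A \succ 0$, the matrix $I + A$ is positive definite and hence invertible, so $C_T$ is invertible and shares the eigenvectors of $A$. Writing $\lambda_i^A > 0$ for the eigenvalues of $A$, the eigenvalues of $C_T$ are $\lambda_i^T = (1 + \lambda_i^A)^{-1}$, each of which lies in $(0,1)$ because $\lambda_i^A > 0$. All three properties thus hold at $t = T$.

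For the inductive step, assume $C_{t+1}$ is invertible, shares the eigenvectors of $A$, and has eigenvalues $\lambda_i^{t+1} \in (0,1)$. Since $2I + A$ and $C_{t+1}$ are both diagonalized by the eigenbasis of $A$, so is $2I + A - C_{t+1}$, whose $i$-th eigenvalue equals $2 + \lambda_i^A - \lambda_i^{t+1}$. Using $\lambda_i^{t+1} < 1$ gives $2 + \lambda_i^A - \lambda_i^{t+1} > 1 + \lambda_i^A > 1 > 0$, so $2I + A - C_{t+1}$ is positive definite and therefore invertible; this justifies the definition $C_t = (2I + A - C_{t+1})^{-1}$ and yields property (a). Being the inverse of a matrix carrying the eigenbasis of $A$, the matrix $C_t$ carries that eigenbasis as well, giving property (b). Finally, its eigenvalues are $\lambda_i^t = (2 + \lambda_i^A - \lambda_i^{t+1})^{-1}$; the bound $2 + \lambda_i^A - \lambda_i^{t+1} > 1$ forces $\lambda_i^t < 1$, while positivity of the denominator forces $\lambda_i^t > 0$, establishing property (c) and closing the induction.

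The argument is essentially routine once the simultaneous-diagonalizability observation is made; the only point requiring any care is the two-sided bound keeping each $\lambda_i^t$ strictly inside $(0,1)$ at every step, which is exactly what guarantees that $2 + \lambda_i^A - \lambda_i^{t+1}$ stays above $1$ and hence lets the invariant propagate backward. No fixed-point or monotonicity analysis of the recursion is needed for this lemma; the finer ordering structure $C_L \prec C_1 \prec \cdots \prec C_T$ is deferred to Corollary \ref{corr:Ct_add_props}.
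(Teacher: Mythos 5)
Your proof is correct and follows essentially the same route as the paper's: backward induction from $C_T = (I+A)^{-1}$, using the fact that all matrices in the recursion share the eigenbasis of $A$ so that the eigenvalues of $2I + A - C_{t+1}$ equal $2 + \lambda_i^A - \lambda_i^{t+1} > 1$, which simultaneously gives invertibility and keeps the eigenvalues of $C_t$ in $(0,1)$. Your write-up is in fact slightly cleaner than the paper's (which contains a typo claiming the eigenvalues of $A$ exceed $1$ rather than $0$), but there is no substantive difference in approach.
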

\begin{proof}
We prove this result using induction. Because $A$ is a positive definite matrix, its eigenvalues are greater than 1. Consequently, $(I+A)$ is invertible and eigenvalues of $C_T = (I+A)^{-1}$ are in $(0,1)$. Further, $(I+A)^{-1}$ has the same set of eigenvectors as $A$. Therefore, the lemma holds for $t = T$. Now assume the claim for $t \leq T$. From the recursion relation, we have
\begin{align}
    C_{t-1}^{-1} = 2I + A - C_{t}
\end{align}
Now, $2I + A - C_{t}$ will have the same eigenvectors as $A$. Further, eigenvalues of $2I + A - C_{t}$ are $\{2 + \lambda_i^A - \lambda_i^t \}_t$ and greater than 1 as $\{\lambda_i^t\}_t$, the eigenvalues of $C_{t}$, are in $(0,1)$. Therefore, $2I + A - C_{t}$ is invertible and so is $C_{t-1}$. Consequently, $C_{t-1}$ has same eigenvectors as that of $A$ and the eigenvalues of $C_{t-1}$ are in $(0,1)$.
\end{proof}

\subsection{Asymptotic behavior of $\{C_t\}_{t=1}^T$}\label{proof:ct and Ct properties}

\begin{lemma}\label{lemma:scalar_ct_props}
For any horizon $T>0$, consider the sequence of numbers $\{c_t\}_{t=1}^T$ satisfying
\begin{equation*}
    \frac{1}{c_t} = 2 + \lambda - c_{t+1}
\end{equation*}
with $c_T = \frac{1}{\lambda + 1}$ for some $\lambda > 0$. Such a sequence of numbers has the following properties
\begin{enumerate}
    \item \label{lemma:scalar_ct_props_part_1} For $A=\lambda I$, the \ai{} algorithm will have
    \begin{equation*}
        C_t = c_t I \text{ } \forall \text{ } t \in \{1,\ldots, T\}
    \end{equation*}
    \item \label{lemma:scalar_ct_props_part_2} The evolution of $c_t$ is
    \begin{equation*}
        (c_t - c_L) = (c_{t+1} - c_L)c_t c_L
    \end{equation*}
    \item \label{lemma:scalar_ct_props_part_3} $\{c_t\}_{t=1}^T$ is an increasing sequence that is
    \begin{equation*}
      c_L < c_1 < \ldots < c_T
    \end{equation*}
    \item \label{lemma:scalar_ct_props_part_4} $\{c_t\}$ satisfy
    \begin{equation*}
        (c_t - c_L) \leq (1 - c_L)\left(c_T\right)^{2(T-t+1)}
    \end{equation*}
    \item \label{lemma:scalar_ct_props_part_5} The behavior of $c_1$ as horizon goes to infinity is
    \begin{equation*}
        \lim_{T \to \infty} c_1 = c_L
    \end{equation*}
\end{enumerate}
where $c_L = \frac{\lambda + 2 -\sqrt{\lambda^2 + 4\lambda}}{2} = \frac{2}{\lambda + 2 + \sqrt{\lambda^2 + 4\lambda}} \in (0,1)$ for $\lambda > 0$.
\end{lemma}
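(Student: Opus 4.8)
The plan is to handle the five claims in order, exploiting the scalar fixed-point structure of the recursion throughout. First I would record the basic facts about $c_L$: it is the root in $(0,1)$ of $c^2 - (2+\lambda)c + 1 = 0$, equivalently the fixed point of $c \mapsto (2+\lambda-c)^{-1}$. From $\frac{1}{c_L} = 2 + \lambda - c_L$ together with $c_L > 0$ I immediately get $\frac{1}{c_L} < 2 + \lambda$, hence the single fact $c_L > \frac{1}{2+\lambda}$, which (perhaps surprisingly) is the crux of the whole lemma. For Part 1, I argue by backward induction on $t$: when $A = \lambda I$ we have $C_T = (I+A)^{-1} = (1+\lambda)^{-1}I = c_T I$, and if $C_{t+1} = c_{t+1}I$ then the \ai{} recursion gives $C_t^{-1} = (2+\lambda-c_{t+1})I = c_t^{-1}I$. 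The same backward induction (or a direct appeal to Lemma \ref{lemma:Ct_main_props}) gives $c_t \in (0,1)$, since $c_{t+1}\in(0,1)$ yields $1 < 1+\lambda \le \frac{1}{c_t} = 2+\lambda - c_{t+1} < 2+\lambda$.

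Part 2 is pure algebra: subtracting $\frac{1}{c_L} = 2+\lambda - c_L$ from $\frac{1}{c_t} = 2 + \lambda - c_{t+1}$ gives $\frac{c_L - c_t}{c_t c_L} = c_L - c_{t+1}$, which rearranges to the stated identity $(c_t - c_L) = (c_{t+1} - c_L)\,c_t c_L$. With this identity, Parts 3 and 4 both follow by backward induction from $t = T$. For Part 3 the base case $c_T > c_L$ reduces, via the second form $c_L = \frac{2}{\lambda + 2 + \sqrt{\lambda^2+4\lambda}}$, to $\sqrt{\lambda^2+4\lambda} > \lambda$, which holds for $\lambda>0$; then since $c_t, c_L \in (0,1)$ force $0 < c_t c_L < 1$, the identity shows that $c_{t+1} > c_L$ propagates both $c_t > c_L$ (product of positives) and $c_t < c_{t+1}$ (shrinking by a factor $<1$), giving the chain $c_L < c_1 < \cdots < c_T$.

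For Part 4 the inductive step is immediate from Part 2: assuming $(c_{t+1} - c_L) \le (1-c_L)(c_T)^{2(T-t)}$, Part 3 gives $c_t \le c_T$ and $c_L < c_T$, so $c_t c_L \le (c_T)^2$ and therefore $(c_t - c_L) = (c_{t+1}-c_L)c_t c_L \le (1-c_L)(c_T)^{2(T-t)}(c_T)^2 = (1-c_L)(c_T)^{2(T-t+1)}$. The hard part, and the only genuine obstacle, is the base case $t=T$, namely $c_T - c_L \le (1-c_L)(c_T)^2$. I would dispatch this by factoring $(1-c_L)(c_T)^2 - (c_T - c_L) = (c_T - 1)\big(c_T - c_L(1+c_T)\big)$ and using $c_T < 1$, which turns the inequality into $c_T(1-c_L) \le c_L$, i.e. $c_L \ge \frac{c_T}{1+c_T}$. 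Since $c_T = \frac{1}{1+\lambda}$ gives $\frac{c_T}{1+c_T} = \frac{1}{2+\lambda}$, this is exactly the fact $c_L > \frac{1}{2+\lambda}$ recorded at the outset.

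Finally, Part 5 follows by squeezing: Part 3 gives $c_1 > c_L$, and Part 4 at $t=1$ gives $0 < c_1 - c_L \le (1-c_L)(c_T)^{2T}$ where $c_T = (1+\lambda)^{-1} \in (0,1)$ is fixed independently of the horizon $T$. Hence $(c_T)^{2T} \to 0$ as $T \to \infty$ and $c_1 \to c_L$. Overall the only nontrivial estimate is the Part 4 base case, everything else being a short backward induction driven by the identity of Part 2; I would present the fixed-point fact $c_L > \frac{1}{2+\lambda}$ up front so that both the monotonicity and the geometric decay fall out cleanly.
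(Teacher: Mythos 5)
Your proposal is correct and follows essentially the same route as the paper: backward induction for Part 1, the identical algebraic identity for Part 2, the same base case $c_T>c_L$ (via $\sqrt{\lambda^2+4\lambda}>\lambda$) and induction for Part 3, and the same geometric-decay induction and squeeze for Parts 4--5. The only cosmetic difference is the Part 4 base case: the paper notes $\frac{1}{c_T}=2+\lambda-1$ and applies the Part-2 identity with $c_{T+1}=1$ to get $(c_T-c_L)=(1-c_L)c_Tc_L\le(1-c_L)c_T^2$, whereas you factor $(1-c_L)c_T^2-(c_T-c_L)=(c_T-1)\bigl(c_T-c_L(1+c_T)\bigr)$ and invoke $c_L>\frac{1}{2+\lambda}$; both are valid.
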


\begin{proof}
For the part \ref{lemma:scalar_ct_props_part_1} of the lemma, we prove it by induction. First observe that
\begin{align}
    C_T &= (A+I)^{-1}\\
    &= \left(\frac{1}{\lambda + 1} \right) I
\end{align}
for $\lambda > 0$. This means part (i) holds for $t = T$. Assuming the claim for some $t \leq T$, that is, $C_t = c_t I$
\begin{align}
    C_{t-1} &= (2I + A - C_t)^{-1}\\
    &= ((1+\lambda - c_t)I)^{-1}\\
    &= \left(\frac{1}{2 + \lambda - c_t} \right) I\\
    &= c_{t-1} I
\end{align}
Now, observe that
\begin{align}
    c_L + \frac{1}{c_L} &= \frac{\lambda + 2 -\sqrt{\lambda^2 + 4\lambda}}{2} + \frac{\lambda + 2 +\sqrt{\lambda^2 + 4\lambda}}{2}\\
    &= \lambda + 2
\end{align}
and to recap $\{c_t\}_{t=1}^T$ satisfy the recursion of Lemma \ref{lemma:scalar_ct_props}, that is
\begin{align}\label{eqn:scalar_coeff_recurr}
    \frac{1}{c_t} = 2+\lambda -c_{t+1}
\end{align}
and $c_T = \frac{1}{\lambda + 1}$. Therefore,
\begin{align}\label{eqn:coeff_recurr_alt}
    \frac{1}{c_L} - \frac{1}{c_t} = c_{t+1} - c_L
\end{align}
which gives
\begin{align}
    (c_{t} - c_L) = (c_{t+1} - c_L) c_t c_L
\end{align}
proving part \ref{lemma:scalar_ct_props_part_2} of the lemma. Now based on the recursion \eqref{eqn:scalar_coeff_recurr}, 
\begin{align}
    c_{t+1} \in (0,1) \implies c_t \in (0,1)
\end{align}
and $c_T = \frac{1}{\lambda + 1} \in (0,1)$, which inductively proves that $c_t \in (0,1)$  for all $t$. Now, observe that $c_L \in (0,1)$ for $\lambda > 0$ and hence,
\begin{align}
    \begin{split}
    (c_{t} - c_L) &< (c_{t+1} - c_L)\\
    c_t &< c_{t+1}
    \end{split}
\end{align}
Also notice that because of the recursion \eqref{eqn:coeff_recurr_alt}, $$c_{t+1} > c_L \implies c_t > c_L$$ and we know 
\begin{align}
    c_L = \frac{2}{(\lambda+1) + (1 + \sqrt{\lambda^2 + 4\lambda})}  < \frac{1}{\lambda + 1} = c_T \text{  } \forall \text{ } \lambda>0
\end{align} which inductively proves that $c_t >c_L$  for all $t$ completing the proof of part \ref{lemma:scalar_ct_props_part_3} of the lemma. 
Observe that
\begin{align}
    (c_t - c_L) &= (c_{t+1} - c_L)c_t c_L\\
    &\leq (c_{t+1} - c_L)c_T^2\\
    &\leq (c_T - c_L)c_T^{2(T-t)}
\end{align}
Also, note that 
\begin{align}
    \frac{1}{c_T} = 2 + \lambda - 1 \implies (c_T - c_L) = (1-c_L)c_Tc_L
\end{align}
and therefore,
\begin{align}
    (c_t - c_L) &\leq (1 - c_L)c_T^{2(T-t+1)}
\end{align}
proving part \ref{lemma:scalar_ct_props_part_4} of the lemma. Finally because $c_T < 1$ for $\lambda > 0$,
\begin{align}
    \lim_{T \to \infty} c_1 \leq  \lim_{T \to \infty} (c_T - c_L)c_T^{2T} = 0
\end{align}
proving part \ref{lemma:scalar_ct_props_part_5} of the lemma.
\end{proof}
\begin{definition}
The square root of a real positive definite matrix $M$ is the unique positive definite matrix $M^{1/2}$ such that $$M^{1/2}\left(M^{1/2}\right)^T = \left(M^{1/2}\right)^TM^{1/2} = M.$$ Since, $M$ can be written as $PD_M P^T$ where $D_M$ is the diagonal matrix of eigenvalues of $M$ and $P$ is the matrix having eigenvectors of $M$ as columns (also called the \textit{modal matrix} of $A$), $$M^{1/2} = PD_M^{1/2} P^T,$$  and $D_M^{1/2}$ is the diagonal matrix of square root of eigenvalues of $M$.
\end{definition}
\begin{definition}\label{def:CL_defn}
Define $C_L$ as
\begin{equation*}
    C_L := \frac{A+2I - \sqrt{A^2 + 4A}}{2}.
\end{equation*}
Then $C_L$ will have the same set of eigenvectors as $A$, with eigenvalues $\left\{ \lambda_i^L \right\}_{i=1}^d$ where
\begin{equation*}
    \lambda_i^L = \frac{\lambda_i^A + 2 - \sqrt{\left(\lambda_i^A\right)^2 + 4 \lambda_i^A}}{2} = \frac{2}{\lambda_i^A + 2 + \sqrt{\left(\lambda_i^A\right)^2 + 4 \lambda_i^A}}.
\end{equation*}
Note that $\lambda_i^L$ is a decreasing function of $\lambda_i^A$ and therefore,
\begin{align}
    \lambda_{\min}^L &= \frac{\lambda_{\max}^A + 2 - \sqrt{\left(\lambda_{\max}^A\right)^2 + 4 \lambda_{\max}^A}}{2}\\
    \lambda_{\max}^L &= \frac{\lambda_{\min}^A + 2 - \sqrt{\left(\lambda_{\min}^A\right)^2 + 4 \lambda_{\min}^A}}{2}
\end{align}
\end{definition}

\begin{corollary}\label{corr:Ct_add_props}
For any horizon $T>0$, consider a matrix sequence $\{M_t\}_{t=1}^T$ satisfying
\begin{align}
    M_t^{-1} = 2I + A - M_{t+1} \text{ } \forall \text{ } t \in \{1,\ldots, T-1\}
\end{align}
with $C_L \prec M_T \prec I$ and having eigenvectors same as $A$. Denoting the eigenvalues of $\{M_t\}_{t=1}^T$ corresponding to the $i^{th}$ eigenvector as $\{\rho_i^t\}_{t=1}^T$, they satisfy the following properties:
\begin{enumerate}
    \item \label{corr:Ct_add_props_part_1} The $i^{th}$ eigenvalue of the matrices $\{M_t\}_{t=1}^T$ satisfy
    \begin{equation*}
        \frac{1}{\rho_i^t} = 2 + \lambda_i^A - \rho_i^{t+1} \text{ } \forall \text{ } t \in \{1,\ldots,T-1\}
    \end{equation*}
    \item \label{corr:Ct_add_props_part_2} The $i^{th}$ eigenvalue of the matrices $\{M_t\}_{t=1}^T$ satisfy 
    \begin{equation*}
        \left( \rho_i^t - \lambda_i^L \right) = \left( \rho_i^{t+1} - \lambda_i^L \right) \rho_i^t \lambda_i^L
    \end{equation*}
    \item \label{corr:Ct_add_props_part_3} The following order holds for any horizon $T$
    \begin{equation*}
        C_L \prec M_1 \prec \ldots \prec M_T
    \end{equation*}
    \item \label{corr:Ct_add_props_part_4} The $i^{th}$ eigenvalues of $\{M_t\}_{t=1}^T$ show the following behavior
    \begin{equation*}
        \left( \rho_i^t - \lambda_i^L \right) \leq \left(\rho_i^T- \lambda_i^L \right) \left(\rho_i^T \right)^{2(T-t)}
    \end{equation*}
    \item \label{corr:Ct_add_props_part_5} The asymptotic behavior of $\{M_t\}_{t=1}^T$ is
    \begin{equation*}
        \lim_{T \to \infty}     z^T(M_1 - C_L)z = 0 \text{ }  \forall \text{ } z \in \R^d.
    \end{equation*}
\end{enumerate}
\end{corollary}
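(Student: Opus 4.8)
The plan is to exploit the single structural fact that makes this corollary tractable: every $M_t$ shares its eigenvectors with $A$, so all matrices in sight are simultaneously diagonalized by the modal matrix $P$ of $A$. Writing $M_t = P\operatorname{diag}(\rho_1^t,\dots,\rho_d^t)P^T$ and $A = PD_A P^T$, the matrix recursion $M_t^{-1} = 2I + A - M_{t+1}$ decouples into $d$ independent scalar recursions $1/\rho_i^t = 2 + \lambda_i^A - \rho_i^{t+1}$. This is precisely Part~\ref{corr:Ct_add_props_part_1}, and it is also exactly the recursion of Lemma~\ref{lemma:scalar_ct_props} with $\lambda = \lambda_i^A$ and $c_t = \rho_i^t$. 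The only discrepancy from that lemma is the terminal condition: instead of $\rho_i^T = 1/(\lambda_i^A+1)$ we are given only $\lambda_i^L < \rho_i^T < 1$. So the entire corollary reduces to re-running the scalar arguments of Lemma~\ref{lemma:scalar_ct_props} under this weaker boundary hypothesis and then translating the resulting eigenvalue statements into the Loewner order, which is valid exactly because all matrices share the eigenbasis $P$.

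For Part~\ref{corr:Ct_add_props_part_2} I would record that $\lambda_i^L$ is the fixed point of the scalar recursion, i.e. $\lambda_i^L + 1/\lambda_i^L = \lambda_i^A + 2$, equivalently $1/\lambda_i^L = 2 + \lambda_i^A - \lambda_i^L$; subtracting this from $1/\rho_i^t = 2 + \lambda_i^A - \rho_i^{t+1}$ gives $1/\lambda_i^L - 1/\rho_i^t = \rho_i^{t+1} - \lambda_i^L$, and clearing denominators yields $(\rho_i^t - \lambda_i^L) = (\rho_i^{t+1} - \lambda_i^L)\rho_i^t\lambda_i^L$. This transfers verbatim from the scalar lemma since it never used the specific terminal value. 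For Part~\ref{corr:Ct_add_props_part_3} I would establish by downward induction from $t=T$ the invariants $\rho_i^t \in (0,1)$ and $\rho_i^t > \lambda_i^L$: the base case is the hypothesis $\lambda_i^L < \rho_i^T < 1$; in the step, the recursion forces $1/\rho_i^t \in (1+\lambda_i^A,\,2+\lambda_i^A)$ whenever $\rho_i^{t+1}\in(0,1)$, giving $\rho_i^t\in(0,1)$, while Part~\ref{corr:Ct_add_props_part_2} together with $\rho_i^{t+1} > \lambda_i^L$ forces $\rho_i^t - \lambda_i^L > 0$. Once both invariants hold, $\rho_i^t\lambda_i^L < 1$, so Part~\ref{corr:Ct_add_props_part_2} gives $\rho_i^t < \rho_i^{t+1}$, hence $\lambda_i^L < \rho_i^1 < \dots < \rho_i^T$ for every $i$, which in the common eigenbasis is exactly $C_L \prec M_1 \prec \dots \prec M_T$.

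For Part~\ref{corr:Ct_add_props_part_4}, the monotonicity and the bound $\lambda_i^L < \rho_i^T$ from Part~\ref{corr:Ct_add_props_part_3} give $\rho_i^t\lambda_i^L \le (\rho_i^T)^2$, so Part~\ref{corr:Ct_add_props_part_2} becomes $(\rho_i^t - \lambda_i^L) \le (\rho_i^{t+1}-\lambda_i^L)(\rho_i^T)^2$; iterating up to $T$ gives $(\rho_i^t - \lambda_i^L) \le (\rho_i^T - \lambda_i^L)(\rho_i^T)^{2(T-t)}$. Finally, Part~\ref{corr:Ct_add_props_part_5} follows by writing the quadratic form in the common eigenbasis, $z^T(M_1 - C_L)z = \sum_{i=1}^d (\rho_i^1 - \lambda_i^L)(P^T z)_i^2$, and applying Part~\ref{corr:Ct_add_props_part_4} at $t=1$: since the terminal eigenvalues $\rho_i^T$ are a fixed constant in $(\lambda_i^L,1)$ as the horizon grows, each gap obeys $\rho_i^1 - \lambda_i^L \le (\rho_i^T - \lambda_i^L)(\rho_i^T)^{2(T-1)} \to 0$, so the full quadratic form tends to $0$.

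The main obstacle, and really the only place requiring care, is confirming that the scalar arguments of Lemma~\ref{lemma:scalar_ct_props} survive the relaxation of the terminal condition from the exact value $1/(\lambda_i^A+1)$ to the interval $\rho_i^T \in (\lambda_i^L,1)$. This is why the invariants in Part~\ref{corr:Ct_add_props_part_3} must be set up as a downward induction whose base case is the interval hypothesis, and why the estimate in Part~\ref{corr:Ct_add_props_part_4} is phrased in terms of $\rho_i^T$ rather than the explicit constant $1-\lambda_i^L$ of the scalar lemma. A secondary subtlety lies in Part~\ref{corr:Ct_add_props_part_5}, where the limit is taken over a family of sequences indexed by the horizon $T$, so one must keep $\rho_i^T$ (equivalently $M_T$) bounded away from $1$ — which holds for the fixed terminal conditions of \ai{} and \LAItxt{} to which this corollary is applied.
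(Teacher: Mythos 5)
Your proposal is correct and follows essentially the same route as the paper: simultaneous diagonalization in the eigenbasis of $A$ to reduce the matrix recursion to the scalar recursion of Lemma \ref{lemma:scalar_ct_props}, the fixed-point identity for $\lambda_i^L$ to get Part \ref{corr:Ct_add_props_part_2}, downward induction for the ordering, the geometric contraction for Part \ref{corr:Ct_add_props_part_4}, and the eigenbasis expansion of the quadratic form for Part \ref{corr:Ct_add_props_part_5}. If anything, you are slightly more careful than the paper, which invokes Lemma \ref{lemma:scalar_ct_props} directly even though its hypothesis fixes $c_T = 1/(\lambda+1)$, whereas you explicitly re-run the induction from the relaxed base case $\lambda_i^L < \rho_i^T < 1$ and flag that Part \ref{corr:Ct_add_props_part_5} requires the terminal eigenvalues to stay bounded away from $1$ as $T \to \infty$.
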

\begin{proof}
Consider the following matrix
\begin{align}
    C_L = \frac{A + 2I - \sqrt{A^2 + 4A}}{2}
\end{align}
where $\sqrt{A^2 + 4A}$ is the unique positive definite square root of the positive definite matrix $(A^2 + 4A)$. We know that the square root has to be $P D^{1/2} P^T$ where $D$ is the diagonal matrix of eigenvalues of $(A^2 + 4A)$ and $P$ is the matrix having the corresponding eigenvectors of $(A^2 + 4A)$ as columns. Therefore, $\sqrt{A^2 + 4A}$ has the same set of eigenvectors as $(A^2 + 4A)$. This means that $C_L$ has the same set of eigenvectors as $A$. This means that
\begin{align}
    \lambda_i^L = \frac{\lambda_i^A + 2 - \sqrt{\left(\lambda_i^A \right)^2 + 4\lambda_i^A }}{2}  = \frac{2}{\lambda_i^A + 2 + \sqrt{\left(\lambda_i^A \right)^2 + 4\lambda_i^A }} < \rho_i^T
\end{align}
where $\lambda_i^L$ is the $i^{th}$ eigenvalue of $C_L$ and $\lambda_i^A$ is the $i^{th}$ eigenvalue of $A$. Now, consider the recursion followed by $\{C_t\}_{t=1}^T$,
\begin{align}
    M_{t}^{-1} &= 2I + A - M_{t+1}
\end{align}
and we know that each one of $\{M_t\}_{t=1}^T$ has same the set of eigenvectors as $A$. This gives us the following recursion followed by the $i^{th}$ eigenvalue of $\{C_t\}_{t=1}^T$
\begin{align}\label{eqn:eigenval_recurr}
    \frac{1}{\rho_i^{t}} &= 2 + \lambda_i^A - \rho_i^{t+1}
\end{align}
Now, from the value of $\lambda_i^L$, we know
\begin{align}
    \lambda_i^L + \frac{1}{\lambda_i^L} = 2 + \lambda_i^A.
\end{align}
which when substituted in \eqref{eqn:eigenval_recurr} gives
\begin{align}
    \left( \rho_i^t - \lambda_i^L \right) = \left( \rho_i^{t+1} - \lambda_i^L \right) \rho_i^t \lambda_i^L
\end{align}
Now observe that $\left\{ \rho_i^t \right\}_{t=1}^T$ follows recursion \eqref{eqn:scalar_coeff_recurr} with $\lambda = \lambda_i^A$. Further, $\lambda_i^L$ is $c_L$ with $\lambda = \lambda_i^A$. Therefore, applying the properties in Lemma \ref{lemma:scalar_ct_props}, we get that
\begin{align}
    \lambda_i^L < \rho_i^1 < \ldots < \rho_i^T \text{ } \forall \text{ } i \in \{1,\ldots, d\}
\end{align}
and since, $C_L$ and each of $\{M_t\}_t$ have the same set of eigenvectors,
\begin{align}
    C_L \prec M_1 \prec M_2 \prec \ldots \prec M_T.
\end{align}
Further,
\begin{align}
    (\rho_i^{t} - \lambda_i^{L}) &\leq (\rho_i^{t+1} - \lambda_i^{L}) \cdot (\rho_i^{T})^2\\
    &\leq  (\rho_i^{T} - \lambda_i^{L}) \cdot (\rho_i^{T})^{2(T-t)}
\end{align}
Since $\rho_i^{T} < 1$ for every $i$, we have
\begin{align}
    \lim_{T \to \infty} \left(\rho_i^{1} -\lambda_i^{L} \right) \leq \lim_{T \to \infty} (\rho_i^{T} - \lambda_i^{L}) \cdot (\rho_i^{T})^{2(T-1)} = 0 \text{ } \forall \text{ } i \in \{1,\ldots, d\}.
\end{align}
Now, consider $M_1$ and $C_L$. We know $M_1 = PD_1P^T$ and $PD_LP^T$ where $D_1 = diag\left(\left(\rho_i^{1}\right)_{i=1}^d \right)$,  $D_L = diag\left(\left(\lambda_i^{L}\right)_{i=1}^d \right)$ and $P$ is the modal matrix of $A$.
\begin{align}
    z^T(M_1 - C_L)z &= z^TP(M_1 - D_L)P^T z\\
    &= \begin{bmatrix}
        k_1 & \ldots & k_d
    \end{bmatrix}
    \begin{bmatrix}
        \rho_1^1 - \rho_1^L & 0 & \ldots & 0\\
        0 & \rho_2^1 - \rho_2^L & \ldots & 0\\
        \vdots & \ddots & \ldots & \vdots\\
        0 & \ldots & \ldots & \rho_d^1 - \lambda_d^L
    \end{bmatrix}
    \begin{bmatrix}
        k_1\\
        \vdots\\
        k_d
    \end{bmatrix}\\
    &= \sum_{i=1}^d \left(\rho_i^{1} -\lambda_i^{L} \right) k_i^2
\end{align}
for any $z \in \R^d$. Therefore, in the case of infinite horizon,
\begin{align}
    \lim_{T \to \infty}     z^T(M_1 - C_L)z = 0 \text{ }  \forall \text{ } z \in \R^d
\end{align}
\end{proof}     

\subsection{Proof of Theorem \ref{thm:R-OBD_lin_regret}}\label{proof:R-OBD_alg_and_R-OBD_lin_regret}
The \robd{} solution for hitting costs $f_t(x) = \frac{1}{2}(x-v_t)^T A (x-v_t)$ and switching costs $c(x_t,x_{t-1}) = \frac{1}{2}\|x_t - x_{t-1}\|_2^2$ is
\begin{align}
    x_t = \argmin_x \frac{1}{2} (x-v_t)^T A (x-v_t) + \frac{\mu_1}{2} \|x - x_{t-1}\|_2^2 + \frac{\mu_2}{2}\|x - v_t\|_2^2.
\end{align}
Equating the derivative of the objective to zero
\begin{align}
    (A + \mu_1 I + \mu_2 I)x = \mu_1 x_{t-1} + (A+\mu_2)v_t.
\end{align}
Setting $\mu_1 = 1$, we get
\begin{align}
    x_t &= (A+I+\mu_2 I)^{-1} x_{t-1} + (I - (A+I+\mu_2 I)^{-1}) v_t\\
    &= C_{\robd{}} x_{t-1} + (I - C_{\robd{}}) v_t
\end{align}
with $\mu_2$ as
\begin{align}
    \mu_2 = \frac{\lambda_{\min}^A}{2} \left( \sqrt{1 + \frac{4}{\lambda_{\min}^A}} - 1 \right).
\end{align}
Now, lets have a look at eigenvalues of $C_{\robd{}}$
\begin{align}
    C_{\robd{}} &= (A+I+\mu_2 I)^{-1}\\
    &= P (D_A+I+\mu_2 I)^{-1} P^T
\end{align}
which gives us
\begin{align}
    \lambda_i^{C_{\robd{}}} &= \frac{1}{\lambda_i^A + 1 +\frac{\sqrt{\left(\lambda_{\min}^A\right)^2 + 4\lambda_{\min}^A} - \lambda_{\min}^A}{2}}
\end{align}
Therefore, $A \neq \lambda I$ implies there exists $i$ such that $\lambda_i^A \neq \lambda_{\min}^A$, giving us $C_{\robd{}} \neq C_L$. Now,
\begin{align}
    \lambda_i^{C_{\robd{}}} = \lambda_i^L \iff \lambda_i^A = \lambda_{\min}^A
\end{align}
and therefore, we can replace the condition $(C_{\robd{}} - C_L)\Sigma \neq 0_{d \times d}$ and $$Z = \{ i \in \{1,\ldots,d\}: \lambda_i^{C_{\robd{}}} = \lambda_i^{L} \}$$ with $(A - \lambda_{\min}^A I)\Sigma \neq 0_{d \times d}$ and $$Z = \{ i \in \{1,\ldots,d\}: \lambda_i^A = \lambda_{\min}^A \}$$
which proves the theorem through Theorem \ref{thm:FI_lin_regret}.
\[
\pushQED{\qed} 
\qedhere
\popQED
\]

\subsection{Linear regret for Follow the Minimizer (\ftm{}) algorithm}\label{proof:FtM_regret}
\begin{lemma}\label{corr:FtM_lin_regret_exact}
For $f_t(x) = \frac{1}{2} (x-v_T)^T A (x-v_t)$ and $c(x_t,x_{t-1}) = \frac{1}{2}\|x_t - x_{t-1}\|_2^2$ with minimizers $\{v_t\}_t$ being a martingale, the \ftm{} algorithm has regret with respect to \ai{} as
\begin{equation*}
    \text{Regret}_{\ftm{}}[1,T] \geq \left(\frac{\lambda_{\min}^L \sigma^2}{2}\right)T
\end{equation*}
\end{lemma}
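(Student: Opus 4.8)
The plan is to recognize that Follow-the-Minimizer is nothing but the Fixed Interpolation algorithm with $C = 0$: since $\ftm{}$ plays $x_t = v_t$, it can be rewritten as $x_t = 0\cdot x_{t-1} + (I-0)v_t$, which is exactly an \FI{} decision rule with the (trivially symmetric) matrix $C=0$, and $0 \prec I$ is the sole hypothesis the \FI{} definition requires. This lets me invoke the exact regret lower bound of Theorem \ref{thm:FI_lin_regret} directly, rather than redoing the dynamic-programming cost accounting from scratch.

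First I would substitute $C=0$ (hence $\lambda_j^C = 0$ for all $j$) into the bound of Theorem \ref{thm:FI_lin_regret}. Two simplifications occur immediately. The function $w$ collapses, $w(\lambda_j^A,0) = \frac{\lambda_j^A\cdot 0 + (1-0)^2}{1-0} = 1$, so each summand becomes $w(\lambda_j^A,0) - (1-\lambda_j^L) = \lambda_j^L$; and the correction term vanishes, because $\lambda_{\max}^C = 0$ forces $\frac{(\lambda_{\max}^C)^2}{1-(\lambda_{\max}^C)^2}\frac{\sigma^2}{2} = 0$. I would also check that the index set $Z = \{i : \lambda_i^C = \lambda_i^L\}$ is empty here: since $C_L$ has eigenvalues strictly in $(0,1)$ (Lemma \ref{lemma:Ct_main_props}, Definition \ref{def:CL_defn}) while $\lambda_i^C = 0$, no index satisfies $\lambda_i^C = \lambda_i^L$, so the sum runs over all $j \in \{1,\ldots,d\}$. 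Combining these yields
\[
    \text{Regret}_{\ftm{}}[1,T] \geq \frac{T}{2} \sum_{j=1}^d \lambda_j^L\,(e_j^T P^T \Sigma P e_j).
\]

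The final step is to bound this below by $\lambda_{\min}^L$ times a trace. Using $\lambda_j^L \geq \lambda_{\min}^L$ for every $j$ together with $e_j^T P^T \Sigma P e_j \geq 0$ (as $\Sigma \succeq 0$), and then the orthogonality of the modal matrix $P$ to write $\sum_{j=1}^d e_j^T P^T \Sigma P e_j = \operatorname{tr}(P^T \Sigma P) = \operatorname{tr}(\Sigma) = \sigma^2$, I obtain $\text{Regret}_{\ftm{}}[1,T] \geq \frac{T}{2}\lambda_{\min}^L \sigma^2$, as claimed. Since $\ftm{}$ fits the \FI{} template so cleanly, there is no real analytical obstacle; the only points requiring care are the boundary behavior of Proposition \ref{prop:FI_cost_gen} at $C=0$ (where $(I-C^2)^{-1}=I$ and the geometric sums terminate, using the convention $0^0=1$ already adopted there) and confirming that $C=0$, though merely positive semidefinite, still meets the $C \prec I$ requirement of the \FI{} class.
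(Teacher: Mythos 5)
Your proof is correct, but it takes a genuinely different route from the paper's. The paper never invokes the \FI{} machinery for this lemma: it computes \ftm{}'s cost directly --- since $x_t = v_t$, every hitting cost vanishes and the total cost is $\frac{1}{2}\E\|x_0 - v_1\|_2^2 + \sum_{t=2}^T \frac{1}{2}\E\|v_t - v_{t-1}\|_2^2$ --- then subtracts the exact \ai{} cost from Theorem~\ref{thm:LAI_total_cost}, so the regret equals $\sum_{t=1}^T \frac{1}{2}\E\left[(v_t - v_{t-1})^T C_t (v_t - v_{t-1})\right]$ exactly, and finally bounds this below using $C_t \succ C_L \succcurlyeq \lambda_{\min}^L I$ (Corollary~\ref{corr:Ct_add_props}). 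Your reduction --- recognizing \ftm{} as the \FI{} algorithm with $C = 0$ (which indeed satisfies the class hypotheses: the zero matrix is symmetric, is diagonalized by the modal matrix $P$ of $A$, and $0 \prec I$), then specializing Theorem~\ref{thm:FI_lin_regret} via $w(\lambda_j^A, 0) = 1$, the vanishing correction term, and $Z = \emptyset$ because $\lambda_i^L > 0$ --- is valid, and your closing steps ($\lambda_j^L \geq \lambda_{\min}^L$, positive semidefiniteness of $P^T\Sigma P$, and $\mathrm{tr}(P^T \Sigma P) = \mathrm{tr}(\Sigma) = \sigma^2$, which is Lemma~\ref{lemma:var_split_and_combine}) are all sound, including the boundary checks on Proposition~\ref{prop:FI_cost_gen} at $C=0$. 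What each approach buys: yours is shorter and reuses existing machinery, while the paper's is more elementary and slightly more general in its hypotheses --- Theorem~\ref{thm:FI_lin_regret} assumes the increments share a common covariance \emph{matrix} $\Sigma$, whereas the paper's direct argument (and the lemma statement, which references only the scalar $\sigma^2$) requires only a common \emph{variance} $\E\|v_t - v_{t-1}\|_2^2 = \sigma^2$; with time-varying covariances $\Sigma_t$ of equal trace, the paper's proof goes through verbatim while your route would require reopening the proof of Theorem~\ref{thm:FI_lin_regret} to redo the sum over $t$. The paper's computation also yields the exact regret before any bounding, which is marginally stronger information than the lower bound inherited from the \FI{} theorem.
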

\begin{proof}
The cost of \ftm{} algorithm for $f_t(x) = \frac{1}{2}(x-v_t)^T A (x-v_t)$ will be
\begin{align}
    \E[\text{Cost}_{\ftm{}}[1,T]] = \E\left[0 + \frac{1}{2}\|x_0 - v_1\|_2^2 + \sum_{t=2}^T \left(0 + \frac{1}{2}\|v_t - v_{t-1}\|_2^2 \right)\right]
\end{align}
and, therefore, the gap to \ai{} will be
\begin{align}
\begin{split}
\E[\text{Cost}_{\ftm{}}[1,T]] - \E[\text{Cost}_{\text{\ai{}}}[1,T]] =& \E\left[ \frac{1}{2}(x_{0}-v_{1})^T (x_{0}-v_{1}) \bigg| \mathcal{F}_{0} \right]\\
&+ \sum_{t=2}^{T} \E\left[ \frac{1}{2}(v_t - v_{t-1})^T (v_{t} - v_{t-1}) \bigg| \mathcal{F}_{0} \right] \\
&-\E\left[ \frac{1}{2}(x_{0}-v_{1})^T (I - C_{1}) (x_{0}-v_{1}) \bigg| \mathcal{F}_{0} \right]\\
&-\sum_{t=2}^{T} \E\left[ \frac{1}{2}(v_t - v_{t-1})^T (I-C_{t}) (v_{t} - v_{t-1}) \bigg| \mathcal{F}_{0} \right]
\end{split}\\
\begin{split}
=&\E\left[ \frac{1}{2}(x_{0}-v_{1})^T C_{1} (x_{0}-v_{1}) \bigg| \mathcal{F}_{0} \right]\\
&+\sum_{t=2}^{T} \E\left[ \frac{1}{2}(v_t - v_{t-1})^T C_{t} (v_{t} - v_{t-1}) \bigg| \mathcal{F}_{0} \right]
\end{split}\\
\begin{split}
\geq&\E\left[ \frac{1}{2}(x_{0}-v_{1})^T C_{L} (x_{0}-v_{1}) \bigg| \mathcal{F}_{0} \right]\\
&+\sum_{t=2}^{T} \E\left[ \frac{1}{2}(v_t - v_{t-1})^T C_{L} (v_{t} - v_{t-1}) \bigg| \mathcal{F}_{0} \right]
\end{split}\\
\begin{split}
\geq&\E\left[ \frac{1}{2}(x_{0}-v_{1})^T \lambda_{\min}^L I (x_{0}-v_{1}) \bigg| \mathcal{F}_{0} \right]\\
&+\sum_{t=2}^{T} \E\left[ \frac{1}{2}(v_t - v_{t-1})^T \lambda_{\min}^L I (v_{t} - v_{t-1}) \bigg| \mathcal{F}_{0} \right]
\end{split}\\
=& \left(\frac{\lambda_{\min}^L \sigma^2}{2}\right)T
\end{align}
\end{proof}

\section{Additional proofs for \ai{}'s adversarial performance}
\subsection{Proof of Theorem \ref{thm:AI_cr}}\label{proof:AI_cr}
The \ai{} algorithm is
\begin{align}
    x_t = C_t x_{t-1} + (I-C_t) v_t
\end{align}
which means
\begin{align}
    A(x_t - v_t) + (x_t - x_{t-1}) + (C_t^{-1} - A - I)(x_t - v_t) = 0.
\end{align}
The above equation can be written as
\begin{align}
    x_t = \argmin_x \frac{1}{2}(x_t - v_t)^T A (x_t - v_t) +  \frac{1}{2} \|x_t - x_{t-1}\|_2^2 + \frac{1}{2}(x_t - v_t)^T(C_t^{-1} - (I+A))(x_t - v_t).
\end{align}
This means $m = \lambda_{\min}^A$ and $\alpha = \beta = 1$. Remember,
\begin{align}
    C_L^{-1} \succ C_1^{-1} \succ \ldots \succ C_T^{-1}
\end{align}
which means
\begin{align}
    C_L^{-1} - (I+A) \succ C_1^{-1} - (I+A) \succ \ldots \succ C_T^{-1} - (I+A) = 0
\end{align}
Now, $\alpha'_t$ will be the smallest eigenvalue of $C_t^{-1} - (I+A)$ and $\beta'_t$ will be the largest eigenvalue of $C_t^{-1} - (I+A)$. Therefore, $\min_t \alpha'_t = \alpha'_T = 0$ and $\max_t \beta'_t = \beta'_1$. Now, the largest eigenvalue of $C_1^{-1} - (I+A)$ is less than that of of $C_L^{-1} - (I+A)$, that is
\begin{align}
    \beta'_1 &< \max_i \left\{\frac{1}{\lambda_i^L}-(1+\lambda_i^A) \right\}\\
    &= \max_i \frac{\sqrt{\left(\lambda_i^A\right)^2 + 4\lambda_i^A} - \lambda_i^A}{2}\\
    &=\frac{\sqrt{\left(\lambda_{\max}^A\right)^2 + 4 \lambda_{\max}^A} - \lambda_{\max}^A}{2}
\end{align}
This gives us the competitive ratio of dynamic programming for any horizon $T$ as
\begin{align}
    \text{CR}_{\text{\ai{}}} &= \max \left\{1 + \frac{\sqrt{\left(\lambda_{\max}^A\right)^2 + 4 \lambda_{\max}^A} - \lambda_{\max}^A}{2 \lambda_{\min}^A} , 1 + \frac{1}{\lambda_{\min}^A}  \right\}\\
\end{align}
Now the $(\sqrt{y^2 + 4y} - y) < 2$ for all $y\geq 0$, which means the second term is always larger than the first and
\begin{align}
    \text{CR}_{\text{\ai{}}} = 1 + \frac{1}{\lambda_{\min}^A}
\end{align}
\[
\pushQED{\qed} 
\qedhere
\popQED
\]

\section{Additional proofs regarding \LAItxt{}}

\subsection{Proof of Lemma \ref{lemma:nofi_dp_eigenvalues}}\label{proof:nofi_dp_eigenvalues}
As a consequence of $\{C_t\}_t$ and $\{\Tilde{C}_t\}$ having eigenvectors same as that of $A$ and satisfying
\begin{align}
    C_t^{-1} &= 2I + A - C_{t+1}\\
    \Tilde{C}_t^{-1} &= 2I + A - \Tilde{C}_{t+1},
\end{align}
the eigenvalues of $C_t$ and $\Tilde{C}_t$ follow the recursion
\begin{align}
    \frac{1}{\lambda_i^t} &= 2 + \lambda_i^A - \lambda_i^{t+1}\\
    \frac{1}{\Tilde{\lambda}_i^t} &= 2 + \lambda_i^A - \Tilde{\lambda}_i^{t+1}
\end{align}
for $t \in \{1,\ldots, T-1\}$ . This means
\begin{align}
    \frac{1}{\Tilde{\lambda}_i^t} - \frac{1}{\lambda_i^t} = \lambda_i^{t+1} - \Tilde{\lambda}_i^{t+1}
\end{align}
or
\begin{align}
    \left(\lambda_i^t - \Tilde{\lambda}_i^t \right) &= \left(\lambda_i^{t+1} - \Tilde{\lambda}_i^{t+1} \right)\lambda_i^t\Tilde{\lambda}_i^t\\
    &\leq \left(\lambda_i^{t+1} - \Tilde{\lambda}_i^{t+1} \right)\lambda_i^T\Tilde{\lambda}_i^T\\
    &\leq \left(\lambda_i^{t+1} - \Tilde{\lambda}_i^{t+1} \right)\left(\lambda_i^T \right)^2\\
    &\leq \left(\lambda_i^{T} - \Tilde{\lambda}_i^{T} \right)\left(\lambda_i^T \right)^{2(T-t)}
\end{align}
Further,
\begin{align}
    \frac{1}{\lambda_i^{T}} &= 2 + \lambda_i^A - 1\\
    \frac{1}{\Tilde{\lambda}_i^{T}} &= 2 + \lambda_i^A -\left( 1- \frac{\lambda_i^A}{2}\left( \left(1 + \frac{4}{\lambda_i^A}\right)^{\frac{\gamma}{2}} - 1 \right) \right)
\end{align}
meaning
\begin{align}
    \lambda_i^{T} - \Tilde{\lambda}_i^{T} &= \frac{\lambda_i^A}{2}\left( \left(1 + \frac{4}{\lambda_i^A}\right)^{\frac{\gamma}{2}} - 1 \right)\lambda_i^{T}\Tilde{\lambda}_i^{T}\\
    &\leq \frac{\lambda_i^A}{2}\left( \left(1 + \frac{4}{\lambda_i^A}\right)^{\frac{\gamma}{2}} - 1 \right)\left(\lambda_i^T \right)^2.
\end{align}
This gives
\begin{align}
    \left(\lambda_i^t - \Tilde{\lambda}_i^t \right) &\leq \frac{\lambda_i^A}{2}\left( \left(1 + \frac{4}{\lambda_i^A}\right)^{\frac{\gamma}{2}} - 1 \right) \left(\lambda_i^T \right)^{2(T-t+1)}
\end{align}
\[
\pushQED{\qed} 
\qedhere
\popQED
\]    

\begin{lemma}\label{lemma:var_split_and_combine}
Consider $y_s = P^T(v_{T-s+1}-v_{T-s})$ for $s \in [1,T]$. For increments $(v_{T-s+1}-v_{T-s})$ having same covariance matrix $\Sigma$, we have
\begin{align*}
    \sum_{i=1}^d \E[(y_s)_i^2] = \sigma^2
\end{align*}
where $(y_s)_i$ is the $i^{th}$ coordinate of $y$ and $\sigma^2 = Tr(\Sigma)$.
\end{lemma}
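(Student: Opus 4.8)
The plan is to reduce the coordinate sum to a squared Euclidean norm and then exploit the orthogonality of the modal matrix $P$. First I would pull the finite sum inside the expectation and recognize it as a norm:
\[
\sum_{i=1}^d \E[(y_s)_i^2] = \E\Big[\sum_{i=1}^d (y_s)_i^2\Big] = \E\big[\norm{y_s}^2\big] = \E\big[y_s^T y_s\big],
\]
where moving the sum past the expectation is legitimate since the increments have finite covariance $\Sigma$ and hence a finite second moment.

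Next I would substitute $y_s = P^T u_s$ with $u_s := v_{T-s+1}-v_{T-s}$, so that $\E[y_s^T y_s] = \E[u_s^T P P^T u_s]$. The structural fact that does the work is that $A$ is real symmetric (indeed positive definite), so by the spectral theorem its modal matrix $P$ can be taken orthonormal, i.e. $P P^T = P^T P = I$ — this is exactly the decomposition $A = PD_AP^T$ already used throughout the paper. Consequently $\E[u_s^T P P^T u_s] = \E[u_s^T u_s] = \E\norm{u_s}^2$; the orthogonal change of basis leaves the norm invariant.

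Finally I would invoke the definition from Section~\ref{sec:problem_setup}, namely $\sigma^2 = \E\norm{v_t - v_{t-1}}^2 = tr(\Sigma)$. This identity holds because the martingale property \eqref{eqn:minimizer_martingale} forces $\E[v_t - v_{t-1}] = 0$, so the raw second moment $\E\norm{u_s}^2$ coincides with the trace of the common covariance $\Sigma$. Chaining the three displays gives $\sum_{i=1}^d \E[(y_s)_i^2] = \E\norm{u_s}^2 = tr(\Sigma) = \sigma^2$, as claimed.

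There is no substantive obstacle: this is a routine orthogonal-invariance computation. The only two points that merit care — and which I would state explicitly rather than grind through — are (i) the orthogonality of $P$, which relies on choosing an orthonormal eigenbasis of the symmetric matrix $A$ and is precisely what yields $\norm{P^T u_s} = \norm{u_s}$, and (ii) the identification of $tr(\Sigma)$ with $\E\norm{u_s}^2$, which is the already-established consequence of the zero-mean (martingale) increments.
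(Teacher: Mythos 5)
Your proposal is correct and follows essentially the same route as the paper's proof: both collapse the coordinate sum via $\sum_{i=1}^d e_ie_i^T = I$ (equivalently, recognizing the sum as $\E\norm{y_s}^2$), invoke the orthogonality of the modal matrix $P$ so that $PP^T = I$, and conclude with $\E\norm{v_{T-s+1}-v_{T-s}}^2 = tr(\Sigma) = \sigma^2$ as defined in Section 2.2. Your explicit remarks on why $P$ is orthogonal and why the zero-mean martingale increments make $\E\norm{u_s}^2$ equal $tr(\Sigma)$ are points the paper leaves implicit, but they do not change the argument.
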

\begin{proof}
We take $y_s = P^T (v_{T-s+1}-v_{T-s})$. If all increments $(v_{T-s+1}-v_{T-s})$ have same covariance matrix $\Sigma$, we will have
\begin{align}\label{eqn:same_covariance}
    \E(y_s)_i^2 &= \E[e_i^T P^T (v_{T-s+1}-v_{T-s})(v_{T-s+1}-v_{T-s})^T P e_i] \\
    &= e_i^T P^T \Sigma P e_i\\
    &= \E(y)_i^2.
\end{align}
Therefore,
\begin{align}
\sum_{i=1}^d  \E(y)_i^2 &= \sum_{i=1}^d \E[(v_{T-s+1}-v_{T-s})^T P e_i e_i^T P^T (v_{T-s+1}-v_{T-s})]\\ 
&= \E[(v_{T-s+1}-v_{T-s})^T P \left(\sum_{i=1}^d e_i e_i^T \right) P^T (v_{T-s+1}-v_{T-s})]\\
&= \E[(v_{T-s+1}-v_{T-s})^T P I P^T (v_{T-s+1}-v_{T-s})]\\
&= \E\|(v_{T-s+1}-v_{T-s})\|_2^2\\
&= \sigma^2
\end{align}
\end{proof}

\begin{corollary}\label{corr:NOFI_AI_regret_dim}
For the relaxed assumption that increments only have same variance $\sigma^2$ (and not necessarily the entire covariance matrix), \LAItxt{} has the following constant regret,
\begin{equation*}
\text{Regret}_{\LAImath{}}[1,T] \leq  \frac{\sigma^2}{4} \sum_{i=1}^d \left( \frac{\left(1 + \frac{4}{\lambda_{i}^A}\right)^{\frac{\gamma}{2}} - 1}{\lambda_{i}^A + 2} \right)
\end{equation*}
\end{corollary}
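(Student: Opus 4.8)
The plan is to mirror the proof of Theorem~\ref{thm:NOFI_AI_perf}(i) almost verbatim, deviating only at the single step where the identical-covariance hypothesis was invoked. First I would combine the cost upper bound for \LAItxt{} from Proposition~\ref{prop:nofi_dp_cost} with the exact cost of \ai{} from Theorem~\ref{thm:LAI_total_cost} to write
\[
\text{Regret}_{\LAImath{}}[1,T] \leq \frac{1}{2}\sum_{t=1}^{T}\E\left[(v_t - v_{t-1})^T(C_t - \Tilde{C}_t)(v_t - v_{t-1})\right].
\]
Since $C_t$ and $\Tilde{C}_t$ share the modal matrix $P$ of $A$, writing $C_t = PD_tP^T$, $\Tilde{C}_t = P\Tilde{D}_tP^T$ and $y_t = P^T(v_t - v_{t-1})$ diagonalizes the right-hand side into $\frac{1}{2}\sum_{t=1}^{T}\sum_{i=1}^{d}(\lambda_i^t - \Tilde{\lambda}_i^t)\E(y_t)_i^2$, exactly as in the theorem.

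The one change occurs in the treatment of $\E(y_t)_i^2$. Under the weaker hypothesis I cannot claim this quantity is time-invariant, so I cannot factor it out of the sum over $t$. Instead I would use only that $P$ is orthogonal, which gives $\sum_{i=1}^{d}\E(y_t)_i^2 = \E\|v_t - v_{t-1}\|_2^2 = \sigma^2$ for every $t$ (the variance-only content of Lemma~\ref{lemma:var_split_and_combine}, whose proof uses orthogonality and nothing about matching covariances). Because every summand $\E(y_t)_i^2$ is nonnegative and they sum to $\sigma^2$, this yields the per-coordinate bound $\E(y_t)_i^2 \leq \sigma^2$ for all $i$ and $t$, which is the crux of the argument.

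With this in hand I would substitute the eigenvalue-gap bound of Lemma~\ref{lemma:nofi_dp_eigenvalues}, namely $(\lambda_i^t - \Tilde{\lambda}_i^t) \leq \frac{\lambda_i^A}{2}\left((1 + 4/\lambda_i^A)^{\gamma/2} - 1\right)(\lambda_i^T)^{2(T-t+1)}$, and replace each $\E(y_t)_i^2$ by $\sigma^2$ (valid term-by-term since all coefficients are nonnegative). Swapping the order of summation and summing the geometric series $\sum_{t=1}^{T}(\lambda_i^T)^{2(T-t+1)} \leq \sum_{s=1}^{\infty}(\lambda_i^T)^{2s} = (\lambda_i^T)^2/(1 - (\lambda_i^T)^2)$ keeps the bound finite and independent of $T$. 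Finally, using $\lambda_i^T = 1/(1 + \lambda_i^A)$ to simplify $\frac{\lambda_i^A}{2}\cdot\frac{(\lambda_i^T)^2}{1 - (\lambda_i^T)^2} = \frac{1}{2(\lambda_i^A + 2)}$ recovers the stated sum $\frac{\sigma^2}{4}\sum_{i=1}^{d}\frac{(1 + 4/\lambda_i^A)^{\gamma/2} - 1}{\lambda_i^A + 2}$.

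The main obstacle, and really the only subtlety, is recognizing that the bound $\E(y_t)_i^2 \leq \sigma^2$ must be applied to each coordinate $i$ separately rather than to the aggregate, and that the geometric decay in $t$ supplied by Lemma~\ref{lemma:nofi_dp_eigenvalues} is essential: discarding it by bounding $(\lambda_i^T)^{2(T-t+1)} \leq 1$ would produce a regret growing linearly in $T$ and ruin the constant-regret claim. This per-coordinate treatment is precisely why the result carries the full sum over $i$ rather than the single $\lambda_{\min}^A$ term of Theorem~\ref{thm:NOFI_AI_perf}(i); relaxing from identical covariance to merely identical variance costs a factor of at most $d$.
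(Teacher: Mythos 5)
Your proposal is correct and follows essentially the same route as the paper's own proof: both reduce to the diagonalized bound $\frac{\sigma^2}{2}\sum_{t=1}^{T}\sum_{i=1}^{d}\bigl(\lambda_i^t - \Tilde{\lambda}_i^t\bigr)$, then invoke Lemma \ref{lemma:nofi_dp_eigenvalues}, sum the geometric series, and simplify via $\lambda_i^T = 1/(1+\lambda_i^A)$. The only cosmetic difference is where the slack is taken — you bound each coordinate variance $\E(y_t)_i^2 \leq \sigma^2$, while the paper bounds the diagonal matrix $D_t - \Tilde{D}_t \preceq \operatorname{tr}\bigl(D_t - \Tilde{D}_t\bigr) I$ and then uses $\E\|y_t\|_2^2 = \sigma^2$; both steps rest on the same nonnegativity facts and yield the identical intermediate quantity.
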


\begin{proof}
For the looser assumption (that the increments only have same variance and not the entire covariance matrix)
\begin{align}
\begin{split}
\E[Cos&t_{\LAImath{}}[1,T]] - \E[\text{Cost}_{\text{\ai{}}}[1,T]]\\
\leq& \frac{1}{2}\sum_{t=1}^{T}  \E\left[(v_{t} - v_{t-1})^T P(D_t-\Tilde{D}_{t}) P^T(v_{t} - v_{t-1})\right]
\end{split}\\
\leq& \frac{1}{2}\sum_{t=1}^{T}  \E\left[(v_{t} - v_{t-1})^T P\left(\sum_{i=1}^d\lambda_i^t - \Tilde{\lambda}_i^t \right) I P^T(v_{t} - v_{t-1})\right]\\
=& \sum_{t=1}^T \left(\sum_{i=1}^d\lambda_i^t - \Tilde{\lambda}_i^t \right) \frac{\sigma^2}{2}\\
=& \frac{\sigma^2}{2} \sum_{i=1}^d \frac{\lambda_i^A}{2}\left( \left(1 + \frac{4}{\lambda_i^A}\right)^{\frac{\gamma}{2}} - 1 \right) \sum_{t=1}^T \left(\lambda_i^T \right)^{2(T-t+1)}\\
\leq& \frac{\sigma^2}{2} \sum_{i=1}^d \frac{\lambda_i^A}{2}\left( \left(1 + \frac{4}{\lambda_i^A}\right)^{\frac{\gamma}{2}} - 1 \right) \sum_{t=1}^\infty \left(\lambda_i^T \right)^{2t}\\
=& \frac{\sigma^2}{4} \sum_{i=1}^d \left( \frac{ \lambda_i^A\left( \left(1 + \frac{4}{\lambda_i^A}\right)^{\frac{\gamma}{2}} - 1 \right)}{\left(\lambda_i^A + 1 \right)^2 -1} \right)\\
&= \frac{\sigma^2}{4} \sum_{i=1}^d \left( \frac{\left(1 + \frac{4}{\lambda_i^A}\right)^{\frac{\gamma}{2}} - 1}{\lambda_i^A + 2} \right)
\end{align}
\end{proof}

\section{Details of Numerical Experiments}\label{numerical_exp_details}
\subsection{Purely-stochastic environment}
In this subsection, we explain how the minimizer sequence $\{v_t\}_t$ is developed for the experiments. First, the minimizer is written as a sum of random variables $\{u_t\}_t$, 
\begin{align}
    v_t = \sum_{s=1}^t u_s
\end{align}
which can be thought of as increments to the minimizers. The different distributions discussed are with respect to the the increments $\{u_t\}_t$. We build $\{u_t\}_t$ such that they are not necessarily independent of each other. The process for any subset of the horizon $\tau \subseteq \{1,\ldots,T\}$ involves the following steps:
\begin{enumerate}
    \item Choosing a common distribution $F_\tau(\cdot)$ and sampling $z_t$
    \begin{align}
        z_t \sim F_\tau \text{ } \forall \text{ } t \in \tau
    \end{align}
    \item Choosing a positive definite covariance matrix $\Sigma \in \R^{10|\tau| \times 10|\tau|}$ and performing its Choleksy decomposition to get $L$, such that $LL^T = \Sigma$ 
    \item Concatenating $\{z_t\}_{t \in \tau}$ into a single vector $z \in R^{10|\tau|}$ and transforming it into
    \begin{align}
        u = L z
    \end{align}
    and splitting it back into $\{u_t\}_{t \in \tau}$
\end{enumerate}
For the light tail case, we want to exhibit the robustness to distribution shift. We, therefore, split the horizon into five subsets $\{\tau_i\}_{i=0}^4$ where $\tau_i = \{ (iT/5) + 1, \ldots, (i+1)T/5 \}$. For each subset we create the increments $u_t$ according to the above mentioned process, each subset pertaining to a different form of distribution.

To show that our results hold for heavy tail distributions, we choose the pareto distribution (Type 2, that is, Lomax \cite{lomax} with parameter $\alpha > 2$ to ensure finite variance) and the log-normal distributions as our candidates. Since these distributions are one sided, we build the $z_t$ vector, in the process above, by first sampling $d=10$ elements from the one dimensional distribution and then multiply a $p=0.5$ Bernoulli random variable to each element, making $z_t$ a zero-mean heavy tail random vector. 

\subsection{Mixed Environment}
Our next set of experiments deal with an environment which is partly stochastic and partly adversarial. We do this by putting adversarial minimizers in an otherwise martingale sequence of minimizers. The level of adversarial infiltration ranges from zero to hundred, with zero being martingale minimizers and hundred being completely adversarial minimizers. We perform this in the following way:
\begin{enumerate}
    \item First, we generate an IID sequence of increments $\{u_t \sim F: t \in \{1,\ldots,T\}\}$ and define a martingale minimizers as $v_t = \sum_{s=1}^t u_s$. Here, the underlying distribution $F$ can be light or heavy tailed.
    \item Next, we decide the percentage $(p)$ of rounds we want to be adversarial and choose that many rounds from $\{1,\ldots,T\}$ uniformly at random. The set of adversarial rounds $\tau$ is fixed for the $N=1000$ sample runs.
    \item During the runs, we replace $\{v_t\}_{t \in \tau}$ with $\{\Tilde{v}_t\}_{t \in \tau}$ which are generated adversarially. Note that $\{\Tilde{v}_t\}_{t \in \tau}$ is fixed for all sample runs of a given adversarial percentage $(p)$.
\end{enumerate}
For light tail, we choose the normal distribution and for heavy tails we take the two-sided pareto and log-normal distributions. The adversarial percentage $(p)$ varies from $0$ to $100$ and we consider a horizon of $T=100$. The scale of total cost for an adversarial environment can be different than that of a stochastic environment. To mitigate this issue, we plot the ratio of the total cost of an online algorithm \alg{} to that of \ai{}, that is, $\frac{\E[\text{Cost}_{\text{\alg{}}}[1,T]]}{\E[\text{Cost}_{\text{\ai{}}}[1,T]]}$. Note that as the environment becomes more adversarial than stochastic, the expectation has no effect.
\end{appendices}

\end{document}